\newcolumntype{C}[1]{>{\centering\arraybackslash}p{#1}}
\newcolumntype{L}[1]{>{\arraybackslash}p{#1}}
\newcommand{\marginnote}[1]{\marginpar{\raggedright\tiny{#1}}}
\def\fCenter{{\mbox{$\ \Rightarrow\ $}}}
\newcommand{\OFNEG}{\hat{{\sim}}}
\newcommand{\ofneg}{{\sim}}
\newcommand{\OGNEG}{\check{\neg}}
\newcommand{\ogneg}{\neg}
\newcommand{\ONEG}{\tilde{\neg}}
\newcommand{\fns}{\footnotesize}
\newcommand{\mand}{\otimes}
\newcommand{\mrarr}{\,\,\backslash\,\,}
\newcommand{\mlarr}{\,\,\slash\,\,}
\newcommand{\MAND}{\,\hat{\otimes}\,}
\newcommand{\MRARR}{\,\,\check{\backslash}\,\,}
\newcommand{\MLARR}{\,\check{\slash}\,}
\newcommand{\aatop}{\top}
\newcommand{\abot}{\bot}
\newcommand{\aand}{\wedge}
\newcommand{\aor}{\vee}
\newcommand{\fneg}{\ensuremath{\sim}\xspace}
\newcommand{\gneg}{\ensuremath{\neg}\xspace}
\newcommand{\AATOP}{\hat{\top}}
\newcommand{\ABOT}{\ensuremath{\check{\bot}}\xspace}
\newcommand{\wbox}{\ensuremath{\Box}\xspace}
\newcommand{\wdia}{\ensuremath{\Diamond}\xspace}
\newcommand{\bbox}{\ensuremath{\blacksquare}\xspace}
\newcommand{\bdia}{\ensuremath{\Diamondblack}\xspace}
\newcommand{\WBOX}{\ensuremath{\check{\Box}\:}\xspace}
\newcommand{\WDIA}{\ensuremath{\hat{\Diamond}}\xspace}
\newcommand{\BBOX}{\ensuremath{\check{\blacksquare}}\xspace}
\newcommand{\BDIA}{\ensuremath{\hat{\Diamondblack}}\xspace}
\newcommand{\FH}{\ensuremath{\hat{f}}\xspace}
\newcommand{\GH}{\ensuremath{\hat{g}}\xspace}
\newcommand{\GC}{\check{g}\xspace}
\newcommand{\FHS}{\ensuremath{\hat{f}^{\,\sharp}}\xspace}
\newcommand{\FCS}{\ensuremath{\check{f}^{\,\sharp}}\xspace}
\newcommand{\GHF}{\ensuremath{\hat{g}^{\,\flat}}\xspace}
\newcommand{\GCF}{\ensuremath{\check{g}^{\,\flat}}\xspace}
\newcommand{\fs}{\ensuremath{f^\sharp}\xspace}
\newcommand{\starfor}{{/\!\!}_{\star}}
\newcommand{\circfor}{{/\!}_{\circ}}
\newcommand{\starback}{\backslash_{\star}}
\newcommand{\circback}{\backslash_{\circ}}
\newcommand{\F}{\mathsf{F}}
\newcommand{\G}{\mathsf{G}}
\renewcommand{\epsilon}{\varepsilon}
\newcommand{\vp}{\overline{p}}
\newcommand{\ophi}{\overline{\varphi}}
\newcommand{\opsi}{\overline{\psi}}
\newcommand{\OGA}{\overline{\Gamma}}
\newcommand{\ODE}{\overline{\Delta}}
\newcommand{\OSI}{\overline{\Sigma}}
\theoremstyle{plain}
\newtheorem{thm}{Theorem}
\newtheorem{lem}[thm]{Lemma}
\newtheorem{cor}[thm]{Corollary}
\newtheorem{prop}[thm]{Proposition}
\newtheorem{lemma}[thm]{Lemma}
\newtheorem{example}[thm]{Example}
\theoremstyle{definition}
\newtheorem{remark}[thm]{Remark}
\newtheorem{definition}[thm]{Definition}
\newcommand{\bbL}{\mathbb{L}}
\newcommand{\bba}{\mathbb{A}}
\newcommand{\bbA}{\mathbb{A}}
\newcommand{\bari}[1]{\overline{#1}^{\, i}}
\newcommand{\blhd}{\blacktriangleleft}
\newcommand{\brhd}{\blacktriangleright}
\newcommand{\cceq}{\coloncolonequals}
\newcommand{\ceq}{\colonequals}
\title{Algebraic proof theory for $\mathrm{LE}$-logics\thanks{This project has received funding from the European Union's Horizon 2020 research and innovation programme under the Marie Sk\l{}odowska-Curie grant agreement No 101007627.}}
\author[2]{Giuseppe Greco}
\author[3]{Peter Jipsen}
\author[1,5]{Fei Liang\thanks{The research of the third author is supported by the Chinese Ministry of Education of Humanities and Social Science Project (23YJC72040003) and the Young Scholars Program of Shandong University (11090089964225).}}
\author[2,4]{Alessandra Palmigiano\thanks{The research of the first and fourth author is partially funded by the NWO grant KIVI.2019.001.}}
\author[2,5]{Apostolos Tzimoulis\thanks{The research of the third and fifth authors is supported by the Key Project of Chinese Ministry of Education (22JJD720021).}}
\affil[1]{\small School of Philosophy and Social Development, Shandong University, China}
\affil[2]{\small Vrije Universiteit Amsterdam, The Netherlands}
\affil[3]{\small Chapman University, USA}
\affil[4]{\small Department of Mathematics and Applied Mathematics, University of Johannesburg, South Africa}
\affil[5]{\small Institute of Logic and Cognition, Sun Yat-Sen University, China}
\date{}
\begin{document}
\maketitle

\begin{abstract}
In this paper we extend the research programme in algebraic proof theory from axiomatic extensions of the full Lambek calculus to logics algebraically captured by certain varieties of normal lattice expansions (normal LE-logics). Specifically, we generalise the {\em residuated frames} in  \cite{galatos2013residuated} to arbitrary signatures of normal lattice expansions ($\mathrm{LE}$).  Such a generalization provides a valuable tool for proving important properties of $\mathrm{LE}$-logics in full uniformity. We prove semantic cut elimination for the display calculi $\mathrm{D.LE}$ associated with the basic normal LE-logics and their axiomatic extensions with analytic inductive axioms. We also prove the finite model property (FMP) for each such calculus $\mathrm{D.LE}$, as well as for its extensions with analytic structural rules satisfying certain additional properties.  

\ 

{\fns \noindent \textbf{Keywords:} algebraic proof theory; polarity based semantics; normal lattice expansions; non-distributive logics; cut-elimination; finite model property; display sequent calculi; substructural logics.}
\end{abstract}


\section{Introduction}
Algebraic proof theory\cite{ciabattoni2012algebraic} is a research area aimed at establishing systematic connections between results and insights in structural proof theory (such as  cut elimination theorems) and in algebraic logic (such as representation theorems for classes of algebras). While results of each type have been traditionally formulated and developed independently from the other type, algebraic proof theory aims to integrate these fields. The main results in algebraic proof theory mainly concern axiomatic extensions of the full Lambek calculus, and, building on the work of many authors \cite{belardinelli2004algebraic,ciabattoni2006towards,terui2007structural,galatos2010cut,ciabattoni2012algebraic,galatos2013residuated}, establish a systematic connection between a strong form of cut elimination for certain substructural logics (on the proof-theoretic side) and the closure of their corresponding varieties of algebras under MacNeille completions  (on the algebraic side). Specifically, given a cut eliminable sequent calculus  for a basic logic (e.g.~the full Lambek calculus), a core question in structural proof theory concerns the identification of axioms which can be added to the given basic logic so that the resulting axiomatic extensions can be captured by calculi which are again cut eliminable (in what follows, such axiomatic extensions will be referred to as {\em analytic} axiomatic extensions). This question is difficult, since the cut elimination theorem is notoriously a very fragile result. However, in \cite{ciabattoni2008axioms,ciabattoni2012algebraic} a very satisfactory answer is given to this question for substructural logics, by identifying  a hierarchy $(\mathcal{N}_n, \mathcal{P}_n)$ of axioms in the language of the full Lambek calculus, referred to as the {\em substructural hierarchy}, and guaranteeing that, up to the level $\mathcal{N}_2$, these axioms can be effectively transformed into special structural rules (called {\em analytic}) which can be safely added to a cut eliminable calculus without destroying cut elimination.
Algebraically, this transformation corresponds to the possibility of transforming equations into equivalent quasiequations, and remarkably, such a transformation (which we will expand on shortly) is also  key to proving preservation under MacNeille completions and canonical extensions \cite{JonTar51, gehrke2001bounded}.
The second major contribution of algebraic proof theory is the identification of the semantic (algebraic) essence of cut elimination (for cut-free sequent calculi for substructural logics) in the relationship between  certain polarity-based relational structures (referred to as {\em residuated frames}) $\mathbb{W}$ arising from the given sequent calculus, and  certain ordered algebras $\mathbb{W}^+$ which can be thought of as the complex algebras of $\mathbb{W}$ by analogy with modal logic.
Specifically, the fact that the calculus is cut-free is captured semantically by  $\mathbb{W}$ being an {\em intransitive} structure, while $\mathbb{W}^+$ is by construction an ordered algebra, on which the cut rule is sound. Hence, in this context, cut elimination is encoded in the preservation of validity from $\mathbb{W}$ to $\mathbb{W}^+$. For instance,  the validity of analytic structural rules/quasiequations is preserved from $\mathbb{W}$ to $\mathbb{W}^+$ (cf.~\cite{ciabattoni2012algebraic}), which shows that analytic structural rules can indeed be safely added to the basic Lambek calculus in a way which preserves its cut elimination. 


In \cite{galatos2013residuated}, residuated frames are introduced. Much in the same way as Kripke frames for modal logic, residuated frames provide relational semantics for substructural logics and underlie the representation theory for the algebraic semantics of substructural logics. The algebraic proof theory program is developed in \cite{galatos2013residuated} by showing the existence of a connection
between Gentzen-style sequent calculi for substructural logics and residuated frames, which translates into a connection between a cut-free proof system, the finite model property and the finite
embeddability property for the corresponding variety of algebras.

A closely related but different line of investigation motivated by the same general question (concerning the identification of classes of analytic axiomatic extensions of given basic logics) has been recently pursued in the setting of proper display calculi \cite{Wan02} for {\em normal (D)LE-logics}, i.e.~the logics algebraically captured by varieties of normal (distributive) lattice expansions\footnote{This class of logics prominently includes (bi-)intuitionistic logic, modal logics on a classical and non-classical (e.g.~intuitionistic, distributive, general lattice) propositional base, substructural logics, quantum logic, paraconsistent logics such as De Morgan and semi De Morgan logics, etc.} (LEs), and especially in connection with the semantic theory of  {\em generalized Sahlqvist theory for (D)LE-logics} \cite{conradie2012algorithmic, CoGhPa14, conradie2016constructive, conradie2019algorithmic}. Originating in an observation of Kracht's in the setting of proper display calculi for classical normal modal logics \cite{Kracht}, this line of investigation was further developed in \cite{greco2018unified, ChnGrePalTzi21}. In \cite{greco2018unified},  the same algorithm for computing the first order correspondents of axioms in an arbitrary (D)LE-signature  was also used to generate their equivalent analytic structural rules (which preserve the applicability of Belnap's general strategy for {\em syntactic} cut elimination when added to a proper display calculus). In the same paper, properly displayable (D)LE-logics (i.e.~those logics which can be captured by a proper display calculus) were characterized in terms of the syntactic shape of a proper subclass of (generalized) Sahlqvist axioms, namely the {\em analytic inductive LE-axioms} (cf.~\cite[Definition 55]{greco2018unified}). Also, thanks to the connection with generalized Sahlqvist theory, a set of basic properties besides subformula property and  Belnap-style cut elimination (namely soundness, completeness, and conservativity) was shown to uniformly hold for the proper display calculi associated with the logics of this class. In particular, the proof of conservativity  hinges on the fact that the validity of generalized Sahlqvist LE-axioms (and hence also of analytic inductive LE-axioms, which form a proper subclass thereof) is preserved under the {\em canonical extension} construction (cf.~\cite[Theorem 7.1]{conradie2019algorithmic}),
and moreover, that canonical extensions of normal LEs are {\em fully residuated} algebras, i.e.~the adjoints and residuals of each connectives in each coordinate exist, even if they might not exist in the original algebra.\footnote{This fact is the algebraic generalization of the well known fact that classical tense modal logic is conservative over classical normal modal logic, since Kripke frames are also frames for tense logic.}

In this paper, we bring together the two lines of investigation discussed above: we extend results and
techniques  in algebraic proof theory from substructural logics to
 normal LE-logics, also using results and insights from generalized Sahlqvist theory. The broadness of this setting  makes it possible for techniques and results 
to transfer from one area to another; for instance, correspondence-theoretic results developed for modal logics can be transferred to substructural logics, and conversely, proof-theoretic results developed for substructural logics can be transferred to modal logics.  Concretely:
\begin{enumerate}
\item  building on the {\em polarity-based} semantics for LE-logics \cite{conradie2016categories, CFPPTW17, conradie20}, we introduce \emph{LE-frames} as the counterparts of residuated frames of \cite{galatos2013residuated}  for arbitrary normal lattice expansion signatures (LE-signatures) which do not need to be closed under the residuals of each
connective;
\item we introduce \emph{functional D-frames} as the LE-frames associated with any proper display calculus in any  LE-signature; this generalization involves moving from structural rules of so-called simple shape to the more general class of analytic
structural rules (cf.\ \cite{greco2018unified}, Definition 4) in any LE-signature.
\end{enumerate}
The contributions of the present paper include:
\begin{enumerate}
\item the proof of semantic cut elimination for the display calculus $\mathrm{D.LE}$ associated with the basic normal
LE-logic 
in any normal LE-signature; 
\item  the transfer of the cut elimination result to extensions of $\mathrm{D.LE}$
with analytic structural rules; 
\item the finite model property for $\mathrm{D.LE}$ and for  extensions of $\mathrm{D.LE}$
with analytic structural rules satisfying certain additional properties. 
\end{enumerate}
We also discuss how these results recapture the semantic cut elimination results in \cite{ciabattoni2012algebraic} and apply in a modular way to a range of logics which includes the basic epistemic logic of categories and its analytic extensions, the full Lambek calculus and its analytic extensions, the Lambek-Grishin calculus and its analytic extensions, and orthologic.

The paper is organized as follows. In Section \ref{subset:language:algsemantics}, we gather preliminary notions on $\mathrm{LE}$-logics, their syntax, algebraic semantics, and display calculi. In Section \ref{sec:leframes}, we introduce relational models for $\mathrm{LE}$-logics, $\mathrm{LE}$-frames. In Section \ref{sec:dframes} we introduce $\mathrm{D}$-frames, to semantically interpret cut-free display calculi. In Section \ref{sec:semcut}, we prove semantically cut elimination for the display calculi of $\mathrm{LE}$-logics and their analytic extensions. In Section \ref{sec:fmp} we provide a general result for finite model property for certain classes of $\mathrm{LE}$-logics and in Section \ref{sec:exfmp} we provide a number of examples that fit in this class. Finally, in Section \ref{sec:concl} we summarize the results of this article
and collect further research directions.

\section{Preliminaries}
\label{subset:language:algsemantics}
In this section we recall definition, notation and basic properties of $\mathrm{LE}$-logics. As discussed in the introduction, this setting uniformly accounts for many well known logical systems. 
This section reports on and adapts material from \cite{conradie2019algorithmic,greco2018unified}. We start by introducing the language of $\mathrm{LE}$-logics, their algebraic interpretation on normal lattice expansions, and a complete and sound sequent-based axiomatization, and their expansions to a fully residuated language. We continue by introducing polarity-based relational semantics for $\mathrm{LE}$-logics, $\mathrm{LE}$-frames and explain how they can interpret via their algebraic duals $\mathrm{LE}$-logics.  Finally, we introduce the display calculi for $\mathrm{LE}$-logics and their extensions with analytic structural rules. Lattices with residuated operations are closely connected with \emph{partial gaggles} developed in \cite{Dun90}, while their connection with display calculi was originally investigated in \cite{Gore98Gaggles}. In Appendix \ref{sec:GagglesTheoryBasicNotionsAndNomenclature} we provide a thorough comparison between gaggle theory and the theory of lattice expansions.

	\subsection{Basic normal $\mathrm{LE}$-logics and their algebras}\label{ssec:basic}
	
 \paragraph{Language and axiomatization of basic normal LE-logics.}
An {\em order-type} over $n\in \mathbb{N}$ is an $n$-tuple $\epsilon\in \{1, \partial\}^n$. For every order type $\epsilon$, we denote its {\em opposite} order type by $\epsilon^\partial$, that is, $\epsilon^\partial_i = 1$ iff $\epsilon_i=\partial$ for every $1 \leq i \leq n$. For any lattice $\bba$, we let $\bba^1: = \bba$ and $\bba^\partial$ be the dual lattice, that is, the lattice associated with the converse partial order of $\bba$. For any order type $\varepsilon$ over $n$, we let $\bba^\varepsilon: = \Pi_{i = 1}^n \bba^{\varepsilon_i}$.
	
The language $\mathcal{L}_\mathrm{LE}(\mathcal{F}, \mathcal{G})$ (from now on abbreviated as $\mathcal{L}_\mathrm{LE}$) takes as parameters: a denumerable set of proposition letters $\mathsf{AtProp}$, elements of which are denoted $p,q,r$, possibly with indexes, and disjoint sets of connectives $\mathcal{F}$ and $\mathcal{G}$.\footnote{\label{footnote:fg}
The connectives in $\mathcal{F}$ (resp.\ $\mathcal{G}$) correspond to those referred to as {\em positive} (resp.\ {\em negative}) connectives in \cite{ciabattoni2008axioms}. This terminology is not adopted in the present paper to avoid confusion with other usages of these adjectives throughout the paper.
Our assumption that the sets $\mathcal{F}$ and $\mathcal{G}$ are disjoint is motivated by the desideratum of generality and modularity. Indeed, for instance, the order theoretic properties of Boolean negation $\neg$ guarantee that this connective belongs both to $\mathcal{F}$ and to $\mathcal{G}$. In such cases we prefer to define two copies $\neg_\mathcal{F}\in\mathcal{F}$ and $\neg_\mathcal{G}\in\mathcal{G}$, and introduce structural rules (see Section \ref{ssec:syntactic frames associated w algebras}) which encode the fact that these two copies coincide.} Each $f\in \mathcal{F}$ (resp.~$g\in \mathcal{G}$) has arity $n_f\in \mathbb{N}$ (resp.\ $n_g\in \mathbb{N}$) and is associated with some order-type $\varepsilon_f$ over $n_f$ (resp.\ $\varepsilon_g$ over $n_g$). 
Unary connectives $f$ (resp.\ $g$)  are sometimes denoted as $\Diamond$ (resp.\ $\Box$) if their order-type is 1, and $\lhd$ (resp.\ $\rhd$) if their order-type is $\partial$.\footnote{The adjoints of the unary connectives $\Box$, $\Diamond$, $\lhd$ and $\rhd$ are sometimes denoted $\Diamondblack$, $\blacksquare$, $\blhd$ and $\brhd$, respectively.} The terms (formulas) of $\mathcal{L}_\mathrm{LE}$ are defined recursively as follows:
	\[
	\varphi \cceq p \mid \bot \mid \top \mid \varphi \wedge \varphi \mid \varphi \vee \varphi \mid f(\varphi_1, \ldots, \varphi_{n_f}) \mid g(\varphi_1, \ldots, \varphi_{n_g})
	\] 
	where $p \in \mathsf{AtProp}$, $f \in \mathcal{F}$, $g \in \mathcal{G}$, and $\top$ and $\abot$ are optional. Terms in $\mathcal{L}_\mathrm{LE}$ will be denoted either by $s,t$, or by lowercase Greek letters such as $\varphi, \psi, \gamma$ etc. 
In the remainder of the paper, when it is clear from the context,
we will often simplify notation and write e.g.\  $n$ for $n_f$ and $\varepsilon_i$ for $\varepsilon_{f,i}$. We also extend the $\{1,\partial\}$-notation to the symbols $\vee,\wedge,\bot,\top,\le,\vdash$ by defining 
$$
\vee^\partial=\wedge,\qquad \wedge^\partial=\vee,\qquad \bot^\partial=\top,\qquad \top^\partial=\bot,\qquad {\le^\partial}={\ge},\qquad {\vdash^\partial}={\dashv}
$$
while superscript $^1$ denotes the identity map. Therefore, in what follows, we will sometimes write e.g.~$\vee^{\epsilon_i}$ to denote $\vee$ when $\epsilon_i = 1$ and  $\wedge$ when $\epsilon_i = \partial$.

In what follows, for every $k \in \mathcal{F} \cup \mathcal{G}$ we use $k(\ophi)[\varphi]_i$ to indicate that the formula $\varphi$ occurs in the $i$-th coordinate of the vector $\ophi$.

  The generic LE-logic is not equivalent to a sentential logic. Hence the consequence relation of these logics cannot be uniformly captured in terms of theorems, but rather in terms of sequents, which motivates the following definition:
		For any language $\mathcal{L}_\mathrm{LE} = \mathcal{L}_\mathrm{LE}(\mathcal{F}, \mathcal{G})$, the {\em basic}, or {\em minimal} $\mathcal{L}_\mathrm{LE}$-{\em logic} is a set of sequents $\varphi\vdash\psi$, with $\varphi,\psi\in\mathcal{L}_\mathrm{LE}$, which contains as axioms the following sequents for lattice operations and additional connectives:
\begin{center}
\begin{tabular}{c}
$\bot\vdash p, \qquad\quad p\vdash p, \qquad\quad p\vdash \top,$ \\
\rule[0mm]{0mm}{6mm}$p\vdash p \vee q, \quad\quad q\vdash p\vee q, \quad\quad p\wedge q\vdash p, \quad p\wedge q \vdash q,$ \\
\rule[0mm]{0mm}{6mm}$f(\vp)[q\vee^{\epsilon_i} r]_i \vdash f(\vp)[q]_i \vee f(\vp)[r]_i, \qquad f(\vp)[\bot^{\epsilon_i}]_i \vdash \bot,$\\
\rule[0mm]{0mm}{6mm}$g(\vp)[q]_i \wedge g(\vp)[r]_i \vdash g(\vp)[q\wedge^{\epsilon_i} r]_i, \qquad \top \vdash g(\vp)[\top^{\epsilon_i}]_i,$\\
\end{tabular}
\end{center}
		and is closed under the following inference rules (note that $\varphi\vdash^\partial\psi$ means $\psi\vdash\varphi$):
\begin{center}
\begin{tabular}{@{}c@{}}
\AXC{$\varphi \vdash \chi$}
\AXC{$\chi \vdash \psi$}
\BIC{$\varphi \vdash \psi$}
\DP

\AXC{$\varphi \vdash \psi$}
\UIC{$\varphi(\chi/p) \vdash \psi(\chi/p)$}
\DP

\AXC{$\chi \vdash \varphi$}
\AXC{$\chi \vdash \psi$}
\BIC{$\chi \vdash \varphi \wedge \psi$}
\DP

\AXC{$\varphi \vdash \chi$}
\AXC{$\psi \vdash \chi$}
\BIC{$\varphi \vee \psi \vdash \chi$}
\DP
 \\
 
\rule[0mm]{0mm}{8mm}\AXC{$\varphi \vdash^{\epsilon_{f,i}} \psi$}
\UIC{$f(\vp)[\varphi]_i \vdash f(\vp)[\psi]_i$}
\DP

\qquad\qquad

\AXC{$\varphi \vdash^{\epsilon_{g,i}} \psi$}
\UIC{$g(\vp)[\varphi]_i \vdash g(\vp)[\psi]_i$}
\DP
 \\
\end{tabular}
\end{center}


In a basic $\mathcal{L}_{\mathrm{LE}}(\mathcal{F},\mathcal{G})$, the elements of $\mathcal{F}\cup\mathcal{G}$ are mutually independent. However, in some cases we might require that some pairs of  connectives in $\mathcal{F}\cup\mathcal{G}$ are one another's residuals in some coordinates. In particular, given $f\in\mathcal{F}$ or $g\in\mathcal{G}$ we might have $f^\sharp_i\in\mathcal{G}$ if $\varepsilon_{f,i} = 1$ or $g^\flat_i\in\mathcal{F}$ if $\varepsilon_{g,i} = 1$, and $f^\sharp_i\in\mathcal{F}$ if $\varepsilon_{f,i} = \partial$ or $g^\flat_i\in\mathcal{G}$ if $\varepsilon_{g,i} = \partial$, the order-type of which are as follows
	\begin{itemize}
		\item $\epsilon_{f_i^\sharp,i} = \epsilon_{f,i}$ and $\epsilon_{f_i^\sharp,j} = (\epsilon_{f,j})^{\epsilon_{f,i}^\partial}$ for any $j\neq i$,
		\item $\epsilon_{g_i^\flat,i} = \epsilon_{g,i}$ and $\epsilon_{g_i^\flat,j} = (\epsilon_{g,j})^{\epsilon_{g,i}^\partial}$ for any $j\neq i$.
	\end{itemize}
 The $n_f$-ary connective $f^\sharp_i$ is the intended interpretation of the right residual of $f\in\mathcal{F}$ in its $i$th coordinate if $\varepsilon_{f,i} = 1$ (resp.\ its Galois-adjoint if $\varepsilon_{f,i} = \partial$). The $n_g$-ary connective $g^\flat_i$ is the intended interpretation of the left residual of $g\in\mathcal{G}$ in its $i$th coordinate if $\varepsilon_{g,i} = 1$ (resp.\ its Galois-adjoint if $\varepsilon_{g,i} = \partial$). For instance, if $f$ and $g$ are binary connectives such that $\varepsilon_f = (1, \partial)$ and $\varepsilon_g = (\partial, 1)$, then $\varepsilon_{f^\sharp_1} = (1, 1)$, $\varepsilon_{f^\sharp_2} = (1, \partial)$, $\varepsilon_{g^\flat_1} = (\partial, 1)$ and $\varepsilon_{g^\flat_2} = (1, 1)$.\footnote{Note that this notation depends on the connective which is taken as primitive, and needs to be carefully adapted to well known cases. For instance, consider the  `fusion' connective $\circ$ (which, when denoted  as $f$, is such that $\varepsilon_f = (1, 1)$). Its residuals
$f_1^\sharp$ and $f_2^\sharp$ are commonly denoted $/$ and
$\backslash$ respectively. However, if $\backslash$ is taken as the primitive connective $g$, then $g_2^\flat$ is $\circ = f$, and
$g_1^\flat(x_1, x_2): = x_2/x_1 = f_1^\sharp (x_2, x_1)$. This example shows
that, when identifying $g_1^\flat$ and $f_1^\sharp$, the conventional order of the coordinates is not preserved, and depends on which connective
is taken as primitive.}

In this case the basic logic is augmented with the following rules
$$
			\begin{array}{cc}
			\AXC{$f(\ophi)[\varphi]_i \vdash \psi$}
			\doubleLine
			\UIC{$\varphi\vdash^{\epsilon_{f,i}} f^\sharp_i(\ophi)[\psi]_i$}
			\DP
			\qquad&\qquad
			\AxiomC{$\varphi \vdash g(\ophi)[\psi]_i$}
			\doubleLine
			\UIC{$g^\flat_i(\ophi)[\varphi]_i \vdash^{\epsilon_{g,i}} \psi$}
			\DP
			\end{array}
			$$
The double line in each rule above indicates that the rule is invertible (i.e., bidirectional).

Any given language $\mathcal{L}_\mathrm{LE} = \mathcal{L}_\mathrm{LE}(\mathcal{F}, \mathcal{G})$ can be associated with the language $\mathcal{L}_\mathrm{LE}^* = \mathcal{L}_\mathrm{LE}(\mathcal{F}^*, \mathcal{G}^*)$, where $\mathcal{F}^*\supseteq \mathcal{F}$ and $\mathcal{G}^*\supseteq \mathcal{G}$ are obtained by expanding $\mathcal{L}_\mathrm{LE}$ with residuals of each connective at each coordinate. Then, the logic $\mathbf{L}_{\mathrm{LE}}$ is expanded to $\mathbf{L}_\mathrm{LE}^*$, the minimal fully residuated $\mathcal{L}_\mathrm{LE}$-logic, by adding the corresponding residuation rules.
		\begin{thm}(\cite[Theorem 2.4]{ChnGrePalTzi21})
			\label{th:conservative extension}
			The logic $\mathbf{L}_\mathrm{LE}^*$ is a conservative extension of $\mathbf{L}_\mathrm{LE}$, i.e.~every $\mathcal{L}_\mathrm{LE}$-sequent $\varphi\vdash\psi$ is derivable in $\mathbf{L}_\mathrm{LE}$ if and only if $\varphi\vdash\psi$ is derivable in $\mathbf{L}_\mathrm{LE}^*$. 
		\end{thm}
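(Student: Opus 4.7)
The ``if'' direction is immediate: every derivation in $\mathbb{L}_\mathrm{LE}$ is a derivation in $\mathbb{L}_\mathrm{LE}^*$, since the latter is defined by adding rules (and connectives) on top of the former.

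For the nontrivial direction, I would argue algebraically, exploiting the soundness and completeness results just stated for both logics. Suppose $\phi \vdash \psi$ is an $\mathcal{L}_\mathrm{LE}$-sequent derivable in $\mathbb{L}_\mathrm{LE}^*$; by soundness, $\phi \vdash \psi$ is valid in every tense $\mathcal{L}_\mathrm{LE}$-algebra. By completeness of $\mathbb{L}_\mathrm{LE}$ with respect to $\mathbb{LE}$, it suffices to show that $\phi \vdash \psi$ is valid in every LE $\mathbb{A}$. The key step is therefore an \emph{embedding lemma}: for every LE $\mathbb{A}$, there exist a tense $\mathcal{L}_\mathrm{LE}$-algebra $\mathbb{B}$ and an $\mathcal{L}_\mathrm{LE}$-embedding $e : \mathbb{A} \hookrightarrow \mathbb{B}$, where $\mathbb{B}$ is regarded as an $\mathcal{L}_\mathrm{LE}$-algebra by taking its $\mathcal{L}_\mathrm{LE}$-reduct. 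Granted this, any assignment into $\mathbb{A}$ can be composed with $e$ to yield an assignment into $\mathbb{B}$; since $\phi$ and $\psi$ belong to $\mathcal{L}_\mathrm{LE}$ and $e$ is an order-embedding preserving all $\mathcal{L}_\mathrm{LE}$-operations, the validity of $\phi \vdash \psi$ in $\mathbb{B}$ transfers to validity in $\mathbb{A}$.

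The natural candidate for $\mathbb{B}$ is the canonical extension $\mathbb{A}^\delta$: it is a complete lattice into which $\mathbb{A}$ embeds as a dense and compact sublattice, and each $f \in \mathcal{F}$ (resp.\ $g \in \mathcal{G}$) extends to its $\sigma$-extension $f^\sigma$ (resp.\ $\pi$-extension $g^\pi$) which agrees with $f$ (resp.\ $g$) on $\mathbb{A}$. Standard results on canonical extensions of normal lattice expansions guarantee that $f^\sigma$ is completely join-preserving in each coordinate $i$ with $\varepsilon_f(i)=1$ and completely meet-reversing in each coordinate with $\varepsilon_f(i)=\partial$, and dually for $g^\pi$; because $\mathbb{A}^\delta$ is complete, the adjoint functor theorem then provides, in each coordinate, the right residuals/Galois adjoints of $f^\sigma$ and the left residuals/Galois adjoints of $g^\pi$. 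Interpreting each $f^\sharp_i$ and $g^\flat_i$ by these adjoints makes $\mathbb{A}^\delta$ into a tense $\mathcal{L}_\mathrm{LE}$-algebra, and the inclusion $\mathbb{A} \hookrightarrow \mathbb{A}^\delta$ is the required $\mathcal{L}_\mathrm{LE}$-embedding.

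The main obstacle is the verification that the canonical-extension construction actually yields a tense algebra: one must ensure that the $\sigma$- and $\pi$-extensions of the normal operators of an LE are again complete normal operators of the correct variance on $\mathbb{A}^\delta$, so that residuals/Galois adjoints are guaranteed to exist in the complete-lattice setting. This rests on well-established results in the canonical-extension theory of normal LEs (Gehrke--Harding, Gehrke--J\'{o}nsson), and once invoked, the argument above goes through. Alternatively, the MacNeille completion of $\mathbb{A}$ would serve equally well as $\mathbb{B}$, via analogous preservation properties, which will in any case be needed later in the paper.
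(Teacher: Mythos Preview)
Your proposal is correct and follows essentially the same approach as the paper: both arguments hinge on embedding an arbitrary $\mathcal{L}_\mathrm{LE}$-algebra into its canonical extension, observing that the latter is a perfect (hence complete) $\mathcal{L}_\mathrm{LE}$-algebra on which all residuals/Galois adjoints exist, and then using soundness/completeness of the two logics with respect to their respective algebraic classes. The paper phrases the nontrivial direction contrapositively (a refuting $\mathcal{L}_\mathrm{LE}$-algebra yields a refuting tense algebra via $\mathbb{A}^\delta$), whereas you phrase it directly, but the content is the same.
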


\begin{example}\label{ex:1}
    As a running example we consider the language $\mathcal{L}_\mathrm{LE}(\{\mand\},\{\Box,\mrarr\})$, with $n_{\Box}=1$, $n_{\mand} = n_{\mrarr} = 2$, $\varepsilon_{\Box,1}=1$, $\varepsilon_{\mand, 1} = \varepsilon_{\mand, 2} = \varepsilon_{\mrarr, 2} = 1$, $\varepsilon_{\mrarr, 1} = \partial$  and where the logic contains the following bidirectional rule  

 $$ 		\begin{array}{c}
			\AXC{$\varphi\mand\psi\vdash \sigma$}
			\doubleLine
			\UIC{$\psi\vdash\varphi\mrarr \sigma$}
   \DP
   \end{array}$$
The fully residuated language is given by $\mathcal{F}^\ast=\{\bdia,\mand\}$ and $\mathcal{G}^\ast=\{\wbox,/,\backslash\}$ and the logic is augmented with the following rules
   $$
			\begin{array}{ccc}
			\AXC{$\varphi\mand\psi\vdash \sigma$}
			\doubleLine
			\UIC{$\varphi\vdash\sigma\mlarr\psi$}
			\DP
			\qquad&\qquad
			\AxiomC{$\varphi\vdash\sigma\mlarr\psi$}
			\doubleLine
			\UIC{$\psi\vdash\varphi\mrarr \sigma$}
			\DP
   \qquad&\qquad
   \AxiomC{$\bdia\varphi\vdash\psi$}
			\doubleLine
			\UIC{$\varphi\vdash\wbox\psi$}
			\DP
			\end{array}
			$$
\end{example}

\medskip

We let $\mathbf{L}_\mathrm{LE}(\mathcal{F}, \mathcal{G})$ denote the minimal $\mathcal{L}_\mathrm{LE}(\mathcal{F}, \mathcal{G})$-logic. We typically drop reference to the parameters when they are clear from the context. By an {\em $\mathrm{LE}$-logic} we understand any axiomatic extension of $\mathbf{L}_\mathrm{LE}$ in the language $\mathcal{L}_{\mathrm{LE}}$. If all the axioms in the extension are analytic inductive (cf.~\cite[Definition 55]{greco2018unified}) we say that the given $\mathrm{LE}$-logic is {\em analytic}. 
  \paragraph{LE-algebras.}
		For any tuple $(\mathcal{F}, \mathcal{G})$ of disjoint sets of function symbols as above, a {\em  lattice expansion} (abbreviated as LE) is a tuple $\bba = (\bbL, \mathcal{F}^\bbA, \mathcal{G}^\bbA)$ such that $\bbL$ is a lattice, $\mathcal{F}^\bbA = \{f^\bbA\mid f\in \mathcal{F}\}$ and $\mathcal{G}^\bbA = \{g^\bbA\mid g\in \mathcal{G}\}$, such that every $f^\bbA\in\mathcal{F}^\bbA$ (resp.\ $g^\bbA\in\mathcal{G}^\bbA$) is an $n_f$-ary (resp.\ $n_g$-ary) operation on $\bbA$. We will often simplify notation and write e.g.\ $f$ for $f^\bbA$.
  Such an operation $f$ (resp.~$g$)  is an {\em operator} if for every $1\leq i\leq n$,  
$$f(\vp)[q\vee^{\epsilon_i} r]_i = f(\vp)[q]_i \vee f(\vp)[r]_i \quad \text{ and }\quad
g(\vp)[q\wedge^{\epsilon_i} r]_i = g(\vp)[q]_i \wedge g(\vp)[r]_i,$$
and it is {\em normal} if
$$f(\vp)[\bot^{\epsilon_i}]_i = \bot \quad \text{ and }\quad g(\vp)[\top^{\epsilon_i}]_i = \top.$$
More concisely, in a normal LE $\bba$, each operation $f^\bba\in \mathcal{F}^\bbA$ (resp.\ $g^\bba\in \mathcal{G}^\bbA$) is finitely join-preserving (resp.\ meet-preserving) in each coordinate when regarded as a map $f^\bba: \bba^{\varepsilon_f}\to \bba$ (resp.\ $g^\bba: \bba^{\varepsilon_g}\to \bba$).
A normal LE as above is {\em complete} if, in addition, $\mathbb{L}$ is a complete lattice and the operation corresponding to each $f\in \mathcal{F}$ (resp.~$g\in \mathcal{G}$) is coordinate-wise completely join-preserving (resp.~meet-preserving) when regarded as a map $f^\bba: \bba^{\varepsilon_f}\to \bba$ (resp.\ $g^\bba: \bba^{\varepsilon_g}\to \bba$). By well known order-theoretic facts (cf.~\cite[Proposition 7.34]{DaveyPriestley02}), a complete normal LE is also {\em completely residuated}, i.e.~the right (resp.~left) residuals $f^\sharp_i$ (resp.~$g^\flat_i$) in each coordinate $i$ exist of the operations corresponding to  every $f\in \mathcal{F}$ (resp.~$g\in \mathcal{G}$).
Let $\mathbb{LE}$ be the class of LEs. Sometimes we will refer to certain LEs as $\mathcal{L}_\mathrm{LE}$-algebras when we wish to emphasize that these algebras have a compatible signature with the logical language we have fixed.

 Henceforth, every LE is assumed to be normal, so the adjective `normal' will be typically dropped. The class of all LEs is equational, and can be axiomatized by the usual lattice identities, and the identities requiring that every operation $f\in \mathcal{F}$ (resp.~$g\in \mathcal{G}$) is coordinate-wise finitely join-preserving (resp.~meet-preserving) w.r.t.~their associated order-type.
\paragraph{Canonical extensions of normal LEs.} \label{def:can:ext}
The \emph{canonical extension} of a lattice $L$ is a complete lattice $L^\delta$ with $L$ as a sublattice, satisfying
\emph{denseness}: every element of $L^\delta$ can be expressed both as a join of meets and as a meet of joins of elements from $L$, and
	\emph{compactness}: for all $S,T \subseteq L$, if $\bigwedge S \leq \bigvee T$ in $L^\delta$, then $\bigwedge F \leq \bigvee G$ for some finite sets $F \subseteq S$ and $G\subseteq T$.
It is well known that  $L^\delta$ is unique up to isomorphism fixing $L$ (cf.\ e.g.\ \cite[Section 2.2]{GNV05}), and that $L^\delta$ is a  complete  lattice. 
If $L$ is bounded we assume that the bounds of $L$ and $L^\delta$ coincide (cf. Remark 2.9 in \cite{gehrke2001bounded}).
The {\em canonical extension} of an
$\mathcal{L}_\mathrm{LE}$-algebra $\bbA = (L, \mathcal{F}^\bbA, \mathcal{G}^\bbA)$ is the perfect  $\mathcal{L}_\mathrm{LE}$-algebra
$\bbA^\delta: = (L^\delta, \mathcal{F}^{\bbA^\delta}, \mathcal{G}^{\bbA^\delta})$ such that $f^{\bbA^\delta}$ and $g^{\bbA^\delta}$ are defined as the
$\sigma$-extension of $f^{\bbA}$ and as the $\pi$-extension of $g^{\bbA}$ respectively, for all $f\in \mathcal{F}$ and $g\in \mathcal{G}$ (cf.\ \cite{sofronie2000duality, sofronie2000duality2}). It is well known (cf.~\cite{gehrke2001bounded}) that the $\sigma$-extension (resp.~$\pi$-extension) of a map $f$ (resp.~$g$) which is finitely $\varepsilon$-join-preserving (resp.~$\varepsilon$-meet-preserving) for some order-type $\varepsilon$ is completely $\varepsilon$-join-preserving (resp.~$\varepsilon$-meet-preserving). Hence, the canonical extension of an LE is a complete, hence fully residuated, LE.
\paragraph{Algebraic semantics of LE-logics.}	Each language $\mathcal{L}_\mathrm{LE}$ is interpreted in the appropriate class of LEs by considering the unique homomorphic extensions of assignments of proposition variables.

	For every LE $\bba$, the symbol $\vdash$ in  sequents $\varphi\vdash \psi$ is interpreted as the lattice order $\leq$. That is, sequent $\varphi\vdash\psi$ is valid in $\bba$ if $h(\varphi)\leq h(\psi)$ for every homomorphism $h$ from the $\mathcal{L}_\mathrm{LE}$-algebra of formulas over $\mathsf{AtProp}$ to $\bba$. The notation $\mathbb{LE}\models\varphi\vdash\psi$ indicates that $\varphi\vdash\psi$ is valid in every LE. Then it is easy to verify by inspecting the rules that the minimal LE-logic $\mathbf{L}_\mathrm{LE}$ 
 is sound w.r.t.~its corresponding class of algebras $\mathbb{LE}$. Moreover, by means of a routine Lindenbaum-Tarski construction, it can be shown that the minimal LE-logic  is also complete with respect to $\mathbb{LE}$-algebras, i.e.\ that any sequent $\varphi\vdash\psi$ is provable in $\mathbf{L}_\mathrm{LE}$ iff $\mathbb{LE}\models\varphi\vdash\psi$. 
	
\subsection{$\mathrm{LE}$-frames and their complex algebras} \label{sec:leframes}
In this section we recall the definition of $\mathrm{LE}$-frames, a relational semantic environment that acts as the dual of lattice expansions. We introduce notational conventions, then $\mathrm{LE}$-frames and their properties, and finally we show how to obtain a lattice expansion from an  $\mathrm{LE}$-frame.

From now on, we fix an arbitrary  LE-signature $\mathcal{L}=\mathcal{L}(\mathcal{F}, \mathcal{G})$.


\paragraph{Notation.}\label{ssec:notation}
Let $B_0, B_1$ be sets and $S \subseteq B_0 \times B_1$ a binary relation. For subsets $X_0 \subseteq B_0$ and $X_1 \subseteq B_1$ define
$$S^{(0)}[X_1] = \{x_0 \in B_0\mid \forall x_1(x_1 \in X_1 \Rightarrow x_0Sx_1) \},$$
$$S^{(1)}[X_0] = \{x_1 \in B_1\mid \forall x_0(x_0 \in X_0 \Rightarrow x_0Sx_1)\}.$$
If the relation $S$ is fixed, we also use the shorter notation $X_1^\downarrow$ for $S^{(0)}[X_1]$ and $X_0^\uparrow$ for $S^{(1)}[X_0]$.

We now generalize these operations to relations of higher arity.
For a sequence of elements $\overline x=(x_0,\ldots,x_n)$ and a sequence of sets $\overline{X} = (X_0,\ldots, X_n)$ we write $\overline{x}\in\overline{X}$
to indicate that $x_i\in X_i$ for $0\le i\le n$. Notice that such sequences can have length $1$. We let 
$$
\bari{x} = (x_0,\ldots,x_{i-1}, x_{i+1},\ldots, x_n)\quad\text{ and }\quad
\bari{X} = (X_0,\ldots,X_{i-1}, X_{i+1},\ldots, X_n),
$$
\[\bari{x}(w): = (x_0,\ldots,x_{i-1}, w, x_{i+1},\ldots, x_n), \quad\text{ and }\quad \bari{X}(B): = (X_0,\ldots,X_{i-1}, B, X_{i+1},\ldots, X_n).\]
For sets $B_0,\ldots B_n$, an $n{+}1$-ary relation $S\subseteq B_0\times \cdots\times B_n$, and subsets $X_j\subseteq B_j$ for $0\leq j\le n$, define
$$
S^{(i)}[\bari{X}] = \{x_i \in B_i\mid  \forall \bari{x}
(\bari{x}\in\bari{X} \Rightarrow \bari{x}(x_i)\in S) \},
$$
So, for example, if $\overline{X}$ is s.t.~$X_j\subseteq B_j$ for $1\le j\le n$ and $\overline{x}\in \overline{X}$, then  $$S^{(0)}[\overline{X}]=\{x_0\in B_0\mid \forall \overline{x}(\overline{x}\in \overline{X}\Rightarrow (x_0,\ldots,x_n)\in S)\}.$$ Throught this article, to simplify the notation, when each $X_i=\{x_i\}$, we will write $S^{0}[\overline{x}]$, $S^{(i)}[\bari{x}]$, $x^{\uparrow}$ and $x^{\downarrow}$  instead of $S^{0}[\overline{X}]$, $S^{(i)}[\bari{X}]$, $\{x\}^{\uparrow}$ and $\{x\}^{\downarrow}$ respectively.

\begin{lemma}\label{lem:adjunction}
If $S \subseteq B_0 \times \cdots\times B_n$, $\overline X=(X_1,\ldots,X_n)$ and $X_j\subseteq B_j$ for $1\le j\le n$,  then for any $1\leq i \leq n$,
\begin{enumerate}
\item $X_0\subseteq S^{(0)}[\overline{X}]\quad \text{ iff }\quad X_i \subseteq S^{(i)}[X_0, \bari{X}]$.
\item $X_i \subseteq S^{(i)}[S^{(0)}[\overline{X}], \bari{X}].$
\end{enumerate}
\end{lemma}
\begin{proof}
1. 
\begin{center}
\begin{tabular}{rll}
  &$X_0\subseteq S^{(0)}[\overline{X}]$&  \\
$\Leftrightarrow$  &$\forall x_0(x_0\in X_0\Rightarrow x_0\in S^{(0)}[\overline{X}])$& \fns def.~of $\subseteq$\\
$\Leftrightarrow$ &$\forall x_0(x_0\in X_0\Rightarrow \forall\overline{x}(\overline{x}\in\overline{X}\Rightarrow (x_0,\overline{x})\in S))$& \fns def.~of $S^{(0)}[-]$\\
$\Leftrightarrow$ &$\forall x_0,\overline{x}(x_0\in X_0\ \&\ \overline{x}\in\overline{X}\Rightarrow (x_0,\overline{x})\in S)$& \fns quantifier equivalence\\
$\Leftrightarrow$ &$\forall x_0,x_i,\overline{x}(x_0\in X_0\ \&\ x_i\in X_i\ \&\ \bari{x}\in\bari{X}\Rightarrow (x_0,\overline{x})\in S)$& \fns quantifier equivalence\\
$\Leftrightarrow$ &$\forall x_i(x_i\in X_i\ \Rightarrow \forall x_0,\bari{x}(x_0\in X_0\ \&\ \bari{x}\in\bari{X}\Rightarrow (x_0,\overline{x})\in S))$& \fns quantifier equivalence\\
$\Leftrightarrow$  &$\forall x_i(x_i\in X_i\Rightarrow x_i\in S^{(i)}[X_0,\bari{X}])$& \fns def.~of $S^{(i)}[-]$\\
$\Leftrightarrow$ &$X_i \subseteq S^{(i)}[X_0, \bari{X}].$& \fns def.~of $\subseteq$
\end{tabular}
\end{center}
2. 
\begin{center}
\begin{tabular}{rll}
  &$S^{(0)}[\overline{X}]\subseteq S^{(0)}[\overline{X}]$&  \\
$\Leftrightarrow$ &$X_i \subseteq S^{(i)}[S^{(0)}[\overline{X}], \bari{X}].$& item 1.
\end{tabular}
\end{center}
\end{proof}

\paragraph{$\mathrm{LE}$-frames.}
A {\em polarity} is a structure $\mathbb{W} = (W, W^\partial, N)$ where $W$ and $W^\partial$ are arbitrary sets and $N\subseteq W\times W^\partial$ is a binary relation (the notation $W^\partial$ is convenient for subsequent definitions). The relation $N$ defines
a closure operator $\gamma_N:\mathcal{P}(W) \rightarrow \mathcal{P}(W)$ (resp.~$\gamma_N:\mathcal{P}(W^\partial) \rightarrow \mathcal{P}(W^\partial)$)
defined by $\gamma_N(X)=X^{\uparrow\downarrow}$ (resp.~$\gamma_N(Y)=Y^{\downarrow\uparrow}$). A subset $X\subseteq W$ (resp.~$Y\subseteq W^\partial$) is 
\emph{Galois stable} if $\gamma_N(X)=X$ (resp.~$\gamma_N(Y)=Y$).

\begin{definition}\label{def: LE frame}
An {\em $\mathcal{L}$-frame} is a tuple $\mathbb{F} = (\mathbb{W}, \mathcal{R}_{\mathcal{F}}, \mathcal{R}_{\mathcal{G}})$ such that $\mathbb{W} = (W, W^\partial,  N)$ is a polarity, $\mathcal{R}_{\mathcal{F}} = \{R_f\mid f\in \mathcal{F}\}$, and $\mathcal{R}_{\mathcal{G}} = \{R_g\mid g\in \mathcal{G}\}$ such that  for each $f\in \mathcal{F}$ and $g\in \mathcal{G}$, the symbols $R_f$ and  $R_g$ respectively denote $n_f{+}1$-ary and $n_g{+}1$-ary relations on $\mathbb{W}$,
$$
R_f \subseteq W^\partial \times W^{\epsilon_f}    \ \mbox{ and }\ R_g \subseteq W \times (W^\partial)^{\epsilon_g}. 
$$
In addition, we assume that 
the relations $R_f$ and $R_g$ be $N$-{\em compatible}, i.e.~the following sets are Galois-stable (from now on abbreviated as {\em stable}) for all $w_0 \in W$, $v_0 \in W^\partial$, $\overline{w} \in W^{\epsilon_f}$, and $\overline{v} \in (W^\partial)^{\epsilon_g}$ and $1\leq i\leq n_f$ (resp.~$1\leq i\leq n_g$):

$$
R_f^{(0)}[\overline{w}],\qquad R_f^{(i)}[v_0, \bari{w}],\qquad
R_g^{(0)}[\overline{v}],\quad\text{and}\quad R_g^{(i)}[w_0, \bari{v}].$$
\end{definition}
In what follows, for any order type $\epsilon$ on $n$,  we let
$\overline{X}= (X_1, \ldots, X_n)$,
where $X_i \subseteq W^{\epsilon_i}$ for all $1 \leq i \leq n$.
Likewise, we let
$\overline{Y}= (Y_1, \ldots, Y_n)$ where $Y_i \subseteq (W^\partial)^{\epsilon_i}$ for all $1 \leq i \leq n$.   Also, for a set $X\subseteq W$ (resp.~$Y\subseteq W^\partial$), we let $X^1: = X$ and  $X^\partial:=X^\uparrow$ (resp.~$Y^1: = Y$ and  $Y^\partial:=Y^\downarrow$), and remind the reader that $X^\partial$ and $Y^\partial$ are  stable subsets. Finally, we let $N^1: = N$ and let $N^\partial$ denote the converse of $N$.



\begin{lemma}\label{lem:Ri closed}
For any $\mathcal{L}$-frame $\mathbb{F} = (\mathbb{W}, \mathcal{R}_{\mathcal{F}}, \mathcal{R}_{\mathcal{G}})$, any $f \in \mathcal{F}$ and any $g \in \mathcal{G},$ \begin{enumerate}
\item 
if $Y_0 \subseteq W^\partial$,
then
$R_f^{(0)}[\overline{X}] $ and $R_f^{(i)}[Y_0, \bari{X}] $ are stable sets for all $1 \leq i \leq n_f$;

\item 
if $X_0 \subseteq W$,
then
$R_g^{(0)}[\overline{Y}] $ and $R_g^{(i)}[X_0, \bari{Y}] $ are stable sets for all $1 \leq i \leq n_g$.
\item $R_f^{(0)}[\bari{X}(X_i)] = R_f^{(0)}[\bari{X}(\gamma_N(X_i))]$ for all $1 \leq i \leq n_f$;
\item $R_g^{(0)}[\bari{Y}(Y_i)] = R_g^{(0)}[\bari{Y}(\gamma_N(Y_i))]$ for all $1 \leq i \leq n_g$.
\end{enumerate}
\end{lemma}

\begin{proof}
1. By definition, $R_{f}^{(0)}[\overline{X}] =  \bigcap_{\overline{w} \in \overline{X}} R_{f}^{(0)}[\overline{w}]$ and  $R_{f}^{(i)}[Y_0, \bari{X}] = \bigcap_{u \in Y_0, \bari{w} \in \bari{X}} R_{f}^{(i)}[u, \bari{w}]$ are stable, being intersections of stable sets. 
The proof of item 2 is analogous.\\
3. \begin{center}
\begin{tabular}{rcll}
  $u \in R_f^{(0)}[\overline{X}]$& 
iff & $X_i \subseteq R_{f}^{(i)}[\{u\}, \bari{X}]$ & Lemma \ref{lem:adjunction}.1\\
& iff &$\gamma_N(X_i) \subseteq R_{f}^{(i)}[u, \bari{X}]$&item 1\\
& iff &$u \in R_f^{(0)}[\bari{X}(\gamma_N(X_i))]$&Lemma \ref{lem:adjunction}.1
\end{tabular}
\end{center}
The proof of item 4 is analogous.
\end{proof}

For the running Example~\ref{ex:1} we have $\mathcal L$-frames $\mathbb{F} = (W,W^\partial,N, \{R_\mand\}, \{R_\Box ,R_\backslash\})$, where $R_\mand\subseteq W^\partial\times W^2$, $R_\Box\subseteq W\times W^\partial$, $R_\backslash\subseteq W^2\times W^\partial$ and each of these relations is $N$-compatible. Since $\backslash$ is the left residual of $\mand$, the relations $R_{\mand}$ and $R_{\backslash}$ are interdefinable via $R_\mand(u,v,w)\iff R_{\backslash}(w,v,u)$.

\paragraph{Complex algebras of $\mathrm{LE}$-frames.}
\label{sec:ComplexAlgebrasOfLE-frames}
For a polarity $\mathbb{W}$, we let $\mathbb{W}^+$ be the complete $\bigcap$-semilattice of all Galois-stable sets of $\gamma_N$. As is well known, $\mathbb{W}^+$ is a complete lattice, in which $\bigvee \mathcal S := \gamma_N(\bigcup\mathcal S)$ for any $\mathcal S \subseteq \mathbb W^+$. Moreover, $\mathbb{W}^+$ can be equivalently obtained as the dual lattice of the Galois-stable sets of the closure operator $\gamma^\partial_N: \mathcal{P}(W^\partial) \rightarrow \mathcal{P}(W^\partial)$ defined by $\gamma^\partial_N(Y)= Y^{\downarrow\uparrow}$. 

For a lattice $\mathbb{L}$, let $\mathbb{W}_L$ be the polarity $(L, L, \leq)$.
In this case, the complete lattice $\mathbb W_L^+$ is the MacNeille completion of $L$.

\begin{definition} \label{def:complexalg}
The {\em complex algebra} of  an $\mathcal{L}$-frame $\mathbb{F} = (\mathbb{W}, \mathcal{R}_{\mathcal{F}}, \mathcal{R}_{\mathcal{G}})$ is the algebra
$$\mathbb{F}^+ = (\mathbb{L}, \{f_{R_f} \mid f \in \mathcal{F}\}, \{g_{R_g} \mid g \in \mathcal{G}\}),$$ 
where $\mathbb{L} := \mathbb{W}^+$ is the complete lattice of stable sets of $W$, and for $f \in \mathcal{F}$ and $g \in \mathcal{G}$,
$ f_{R_f}: \mathbb{L}^{n}\to \mathbb{L}$ is defined by 
$$
f_{R_f}(X_1,\ldots,X_n) = (R_f^{(0)}[X_1^{\epsilon_{f,1}},\ldots,X_n^{\epsilon_{f,n}}])^\downarrow
$$
and $g_{R_g}: \mathbb{L}^{n_g}\to \mathbb{L}$ is defined by  
$$
g_{R_g}(X_1,\ldots,X_n) = R_g^{(0)}[X_1^{\epsilon^{\partial}_{g,1}},\ldots,X_n^{\epsilon^{\partial}_{g,n}}].
$$
\end{definition}

\begin{prop}\label{prop:F plus is L star algebra}
If $\mathbb{F}$ is an $\mathcal{L}$-frame,  then $\mathbb{F}^+$ is a complete $\mathcal{L}$-algebra.
\end{prop}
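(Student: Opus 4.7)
My plan is to address the proposition in two stages: first, to show that the operations $f_{R_f}$ and $g_{R_g}$ are well-defined on the complete lattice $\mathbb{W}^+$ of $\gamma_N$-stable sets; and second, to verify that they satisfy the normality conditions (preservation of finite joins or reversal of finite meets in each coordinate, as prescribed by $\epsilon_f$, and dually for $g$), together with the corresponding $\bot/\top$-equations. That $\mathbb{W}^+$ is itself a complete lattice is already recorded right after Definition \ref{def:polarity}, so only the above two points require attention.

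For well-definedness, given stable inputs $\overline{X}$, the tuple $\overline{X^{\epsilon_f}}$ of Definition \ref{def:complexalg} has its $j$-th entry stable in $\mathcal{P}(W)$ when $\epsilon_f(j) = 1$ and stable in $\mathcal{P}(U)$ when $\epsilon_f(j) = \partial$, using the fact that $(\cdot)^\uparrow$ is an order-reversing bijection from $\gamma_N$-stable to $\gamma'_N$-stable sets. Lemma \ref{lem:Ri closed} then gives that $R_f^{(0)}[\overline{X^{\epsilon_f}}]$ is stable in $\mathcal{P}(U)$, so its image under $(\cdot)^\downarrow$ is stable in $\mathcal{P}(W)$, since any set of the form $Y^\downarrow$ is automatically $\gamma_N$-stable. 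The case of $g_{R_g}$ is symmetric, with $R_g^{(0)}[\overline{X^{\epsilon_g}}]$ directly stable in $\mathcal{P}(W)$ by the same lemma.

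For normality, I would treat in detail only the join-preservation of $f_{R_f}$ in a coordinate $i$ with $\epsilon_f(i) = 1$; the other three cases (for $f_{R_f}$ with $\epsilon_f(i) = \partial$, and the two cases for $g_{R_g}$) proceed by symmetry. Given a family $\{Z_k\}_k$ of stable sets, unfolding $\bigvee_k Z_k = (\bigcup_k Z_k)^{\uparrow\downarrow}$, applying Lemma \ref{lem:Ri coordinate closed}(1)(i) to drop the $(\cdot)^{\uparrow\downarrow}$-closure inside $R_f^{(0)}$, and then using the distributivity of $R_f^{(0)}$ over unions in the $i$-th coordinate would yield
\[ f_{R_f}(\bari{X}_{\bigvee_k Z_k}) \;=\; \Big(\bigcap_k R_f^{(0)}[\bari{X^{\epsilon_f}}_{Z_k}]\Big)^{\!\downarrow}. \]
On the other side, $\bigvee_k f_{R_f}(\bari{X}_{Z_k}) = (\bigcup_k (R_f^{(0)}[\bari{X^{\epsilon_f}}_{Z_k}])^\downarrow)^{\uparrow\downarrow}$. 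Because each $R_f^{(0)}[\bari{X^{\epsilon_f}}_{Z_k}]$ is $\gamma'_N$-stable by Lemma \ref{lem:Ri closed}, the two expressions match via the routine identity $(\bigcap_k A_k)^\downarrow = (\bigcup_k A_k^\downarrow)^{\uparrow\downarrow}$ for any $\gamma'_N$-stable family $\{A_k\}$, which is an immediate consequence of the Galois connection $((\cdot)^\uparrow, (\cdot)^\downarrow)$ together with the identity $A_k = A_k^{\downarrow\uparrow}$.

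The meet-reversal case ($\epsilon_f(i) = \partial$) would use Lemma \ref{lem:Ri coordinate closed}(1)(ii) together with the order-reversing bijection $(\cdot)^\uparrow$, which sends meets of $\gamma_N$-stable sets to joins of $\gamma'_N$-stable sets; the two $g_{R_g}$-cases are strictly dual. Preservation of $\bot$ and $\top$ in each coordinate falls out of the join/meet arguments specialized to the empty family. The main bookkeeping obstacle will be keeping track of the alternation between $W$-stable and $U$-stable sets as one passes through coordinates of different order-types, but once the notation of Definition \ref{def:complexalg} is correctly unpacked, each of the four cases reduces to a mechanical chain of Lemmas \ref{lem:Ri closed} and \ref{lem:Ri coordinate closed}.
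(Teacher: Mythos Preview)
Your proposal is correct and follows essentially the same approach as the paper: both arguments reduce to showing that each $f_{R_f}$ (resp.\ $g_{R_g}$) preserves arbitrary joins (resp.\ meets) in each coordinate according to its order-type, by unfolding Definition~\ref{def:complexalg}, using the distributivity of $R_f^{(0)}$ over unions, and invoking Lemmas~\ref{lem:Ri closed} and~\ref{lem:Ri coordinate closed}. Your packaging of the key step as the identity $(\bigcap_k A_k)^\downarrow = (\bigcup_k A_k^\downarrow)^{\uparrow\downarrow}$ for $\gamma'_N$-stable $A_k$ is exactly what the paper's line-by-line chain amounts to; and while you phrase the target as ``normality'' (finite preservation), your computation over an arbitrary family $\{Z_k\}_k$ in fact yields \emph{complete} preservation, which is what the paper explicitly aims for and what is needed to conclude that residuals exist.
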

\begin{proof}
We need to prove that for every $f \in \mathcal{F}$ and every $g \in \mathcal{G}$, $f_{R_f}$ is a complete $\epsilon_f$-operator and $g_{R_g}$ is a complete $\epsilon_g$-dual operator. Since the underlying lattice of $\mathbb{F}^+$ is complete, it is enough to show that the residuals of every $f \in \mathcal{F}$ and $g \in \mathcal{G}$ in each coordinate exist (cf.~\cite[Proposition 7.34]{DaveyPriestley02}).

Let $f \in \mathcal{F}$, $X_0,\ldots,X_n,\in  \gamma_N[\mathcal{P}(W)]$ and define $$(f_{R_f})^{\sharp}_{i}(X_1,\ldots,X_n)=(R^{(i)}_f[X_i^{\epsilon^\partial_{f,i}},(\bari{X})^{\epsilon_f}])^{\epsilon_{f,i}}.$$ Notice that, by Lemma \ref{lem:Ri closed}.1, $R^{(i)}_f[X_i,\bari{X}]$ is stable and hence $(f_{R_f})^{\sharp}_{i}$ is well defined. We have:
\begin{center}
\begin{tabular}{cll}
& $f_{R_f}(X_1,\ldots,X_n) \subseteq X_0$ & \\
$\Leftrightarrow$ & $(R_f^{(0)}[X_1^{\epsilon_{f,1}},\ldots,X_n^{\epsilon_{f,n}}])^\downarrow\subseteq X_0$ & definition of $f_{R_f}$\\
$\Leftrightarrow$ & $(X_0)^
\uparrow\subseteq (R_f^{(0)}[X_1^{\epsilon_{f,1}},\ldots,X_n^{\epsilon_{f,n}}])^{\downarrow\uparrow}$ & $(-)^{\uparrow}$ is antitone\\
$\Leftrightarrow$ & $(X_0)^{\uparrow}\subseteq R_f^{(0)}[X_1^{\epsilon_{f,1}},\ldots,X_n^{\epsilon_{f,n}}]$ & Lemma \ref{lem:Ri closed}.1\\
$\Leftrightarrow$ & $X^{\epsilon_{f,i}}_i
\subseteq R_f^{(i)}[(X_0)^{\uparrow},(\bari{X})^{\epsilon_f}]$. & Lemma \ref{lem:adjunction}.1\\
\end{tabular}
\end{center}
If $\epsilon_{f,i}=1$, the last inequality is tantamount to $X_i\subseteq (f_{R_f})^{\sharp}_i(\bari{X}(X_0))$. If $\epsilon_{f,i}=\partial$, the last inequality implies that $(R_f^{(i)}[(X_0)^{\uparrow},(\bari{X})^{\epsilon_f}])^{\downarrow}\subseteq (X_i)^{\uparrow\downarrow}$, which is equivalent to $(f_{R_f})^{\sharp}_i(\bari{X}(X_0))\subseteq X_i$, since $X_i\in\gamma_N[\mathcal{P}(W)]$. In both cases $(f_{R_f})^{\sharp}_i$, is the $i$-th residual of $f_{R_f}$. The argument for $g\in\mathcal{G}$ is analogous.
\end{proof}

\subsection{Proper display calculi for basic normal LE-logics} \label{ssec:syntactic frames associated w algebras}

In this section we  recall the definition of 
the proper display calculus $\mathrm{D.LE}$ for the basic normal $\mathcal{L}$-logic and its cut-free counterpart $\mathrm{cfD.LE}$ for a fixed but arbitrary LE-signature $\mathcal{L} = \mathcal{L}(\mathcal{F}, \mathcal{G})$  (cf.~Section \ref{subset:language:algsemantics}.1). 
Our presentation is a more streamlined version of the one introduced in \cite{greco2018unified} for DLE-logics and then straightforwardly generalized to LE-logics in \cite{ChnGrePalTzi21}. 

The syntax in display calculi is two-layered. One layer, consisting of formulas, cannot be manipulated, while the second, consisting of structures can. Hence, given $\mathcal{L}(\mathcal{F}, \mathcal{G})$ the language of its display calculus is augmented with structural symbols each corresponding to operations in $\mathcal{L}(\mathcal{F}^*, \mathcal{G}^\ast)$. While formulas can appear freely in sequents of the display calculus, structures are restricted on where they can appear in a sequent.

Let $S_{\mathcal{F}} \ceq \{\FH \mid f\in \mathcal{F}^*\}$ and $S_{\mathcal{G}} \ceq \{\GC \mid g\in \mathcal{G}^*\}$ be the sets of structural connectives associated with  $\mathcal{F}^*$ and $ \mathcal{G}^*$ respectively, where $\mathcal{L}(\mathcal{F}^*, \mathcal{G}^\ast)$ denotes the fully residuated language-expansion of $\mathcal{L}(\mathcal{F}, \mathcal{G})$. Each such structural connective has the same arity and  order-type  of its associated  $f\in \mathcal{F}^\ast$ (resp.~$g\in \mathcal{G}^\ast$).

The calculus $\mathrm{D.LE}$ manipulates sequents $\Gamma \Rightarrow \Delta$, where 
$\Gamma$ and $\Delta$ are structures of two sorts which are built from formulas and are defined by the following simultaneous recursions: 

\begin{center}
\begin{tabular}{@{}r@{}l@{}}
$\rule[-1.2ex]{0pt}{0ex}\mathsf{Fm} \ni \varphi$ \ & $ \cceq \ p \mid \bot \mid \top \mid  \varphi \vee \varphi \mid \varphi \wedge \varphi \mid f (\ophi) \mid g (\ophi) 
$ \\


\rule[-1.2ex]{0pt}{0ex}$\mathsf{Str}_\mathcal{F} \ni \Gamma$ \ & $ \cceq \  \varphi \mid \AATOP \mid \FH\, (\OSI) 
$ \\

\rule[-1.2ex]{0pt}{0ex}$\mathsf{Str}_\mathcal{G} \ni \Delta$ \ & $ ::=\  \varphi \mid \ABOT \mid \GC\, (\OSI)$ \\
\end{tabular}
\end{center}

\noindent where $p$ is an atomic formula and $f \in \mathcal{F}$ and $g \in \mathcal{G}$ and $\FH 
\in S_{\mathcal{F}}$ and $\GC
\in S_{\mathcal{G}}$, 
and $\overline{\Sigma}\in \mathsf{Str}_\mathcal{F}^{\epsilon_f}$ (resp.~$\overline{\Sigma}\in \mathsf{Str}_\mathcal{G}^{\epsilon_g}$) for any  $\overline{\Sigma}$ in the argument of $\FH$ (resp.~$\overline{\Sigma}$ in the argument of $\GC(\overline{\Sigma})$). Here $\mathsf{Str}_\mathcal{F}^{\epsilon_f}$ is defined as $\mathsf{Str}_\mathcal{F}^{\epsilon_1}\times\cdots\times \mathsf{Str}_\mathcal{F}^{\epsilon_n}$ where $\mathsf{Str}_\mathcal{F}^\partial=\mathsf{Str}_\mathcal{G}$, and dually for $\mathsf{Str}_\mathcal{G}^{\epsilon_g}$.

In what follows, for every $K \in S_{\mathcal{F}} \cup S_{\mathcal{G}}$ we use $K(\OSI)[\Gamma]_i$ (resp.~$K(\OSI)[\Delta]_i$) to indicate that the structure $\Gamma$ (resp.~$\Delta$) occurs in the $i$-th coordinate of the vector $\OSI$. 

Below, we list the rules of the calculus D.LE.



\begin{itemize}
\item Identity and cut rules:\footnote{Notice that in the display calculi literature, the identity rule is sometimes defined as $\varphi \fCenter \varphi$, where $\varphi$ is an arbitrary, possibly complex, formula. The difference is inessential given that, in any display calculus, $p \fCenter p$ is an instance of $\varphi \fCenter \varphi$, and $\varphi \fCenter \varphi$ is derivable for any formula $\varphi$ whenever $p \fCenter p$ is the Identity rule.}
\end{itemize}
\begin{center}
\begin{tabular}{rl}
\AXC{\phantom{$\Gamma \fCenter \varphi$}}
\LL{\fns Id}
\UI$p \fCenter p$
\DP
 & 
\AX$\Gamma \fCenter \varphi$
\AX$\varphi \fCenter \Delta$
\RL{\fns Cut}
\BI$\Gamma \fCenter \Delta$
\DP
 \\
\end{tabular}
\end{center}


\begin{itemize}		
\item Display postulates for $f\in \mathcal{F}$ and $g\in \mathcal{G}$: for any $1\leq i \leq n_f$ and $1\leq j\leq n_g$,
\end{itemize}
\begin{itemize}
\item[] If $\varepsilon_{f,i} = 1$ and $\varepsilon_{g,j} = 1$,\footnote{The notation $\FH \dashv \FCS_i$ (resp.~$\GHF_j \dashv \GC$) indicates that $\FH$ and $\FCS_i$ (resp.~$\GHF_j$ and $\GC$) are in a {\em residuated pair} and $\FCS_i$ (resp.~$\GHF_j$) is the right residual (resp.~left residual) of $\FH$ (resp.~$\GC$) in the $i$-th coordinate (resp.~$j$-th coordinate).}
\end{itemize}
\begin{center}
\begin{tabular}{@{}c@{}c@{}}
\AXC{$\FH\, (\OSI)[\Gamma]_i \fCenter \Delta$}
\doubleLine
\LL{\fns $\FH \dashv \FCS_i$}
\UIC{$\Gamma \fCenter \FCS_i\, (\OSI)[\Delta]_i$}
\DP
 & \quad
\AXC{$\Gamma \fCenter \GC\, (\OSI)[\Delta]_j$}
\doubleLine
\RL{\fns $\GHF_j \dashv \GC$}
\UIC{$\GHF_j\, (\OSI)[\Gamma]_j \fCenter \Delta$}
\DP \\
\end{tabular}
\end{center}

\begin{itemize}
\item[] If $\varepsilon_{f,i} = \partial$ and $\varepsilon_{g,j} = \partial$,\footnote{The notation $(\GC, \GCF_j)$ (resp.~$(\FH, \FHS_i)$) indicates that $\GC$ and $\GCF_j$ (resp.~$\FH$ and $\FHS_i$) are in a {\em Galois connection} (resp.~{\em dual Galois connection}) and $\GCF_j$ (resp.~$\FHS_i$) is the right residual (resp.~left residual) of $\GC$ (resp.~$\FH$) in the $j$-th coordinate (resp.~$i$-th coordinate).}
\end{itemize}
\begin{center}
\begin{tabular}{@{}c@{}c@{}}					
\AX$\FH\, (\OSI)[\Delta]_i \fCenter \Delta'$
\doubleLine
\LL{\fns $(\FH, \FHS_i)$}
\UI$\FHS_i\, (\OSI)[\Delta']_i \fCenter \Delta$
\DP
 & \quad
\AX$\Gamma' \fCenter \GC\, (\OSI)[\Gamma]_j$
\doubleLine
\RL{\fns $(\GC, \GCF_j)$}
\UI$ \Gamma \fCenter \GCF_j\, (\OSI)[\Gamma']_j$
\DP \\
\end{tabular}
\end{center}

\begin{itemize}
\item Structural rules for lattice connectives:
\end{itemize}
\begin{center}
\begin{tabular}{rl}
\AX$\AATOP \fCenter \Delta$
\LL{\fns $\aatop_W$}
\UI$\Gamma \fCenter \Delta$
\DP
 & 
\AX$\Gamma \fCenter \ABOT$
\RL{\fns $\abot_W$}
\UI$\Gamma \fCenter \Delta$
\DP
\end{tabular}
\end{center}

\begin{itemize}
\item Logical introduction rules for lattice connectives:
\end{itemize}
\begin{center}
\begin{tabular}{rl}
\AX$\AATOP \fCenter \Delta$
\LL{\fns $\aatop_L$}
\UI$\aatop \fCenter \Delta$
\DP
 \ 
\AXC{$\phantom{\AATOP \fCenter \Delta}$}
\RL{\fns $\aatop_R$}
\UI$\AATOP \fCenter \aatop$
\DP
 & 
\AXC{$\phantom{\Gamma \fCenter \ABOT}$}
\LL{\fns $\abot_L$}
\UI$\abot \fCenter \ABOT$
\DP
 \ 
\AX$\Gamma \fCenter \ABOT$
\RL{\fns $\abot_R$}
\UI$\Gamma \fCenter \abot$
\DP
 \\

 & \\

\AX$\psi \fCenter \Delta$
\LL{\fns $\aand_{L2}$}
\UI$\varphi \aand \psi \fCenter \Delta$
\DP
 \ 
\AX$\varphi \fCenter \Delta$
\LL{\fns $\aand_{L1}$}
\UI$\varphi \aand \psi \fCenter \Delta$
\DP
 & 
\AX$\Gamma \fCenter \varphi$
\AX$\Gamma \fCenter \psi$
\RL{\fns $\aand_R$}
\BI$\Gamma \fCenter \varphi \aand \psi$
\DP
 \\

 & \\

\AX$\varphi \fCenter \Delta$
\AX$\psi \fCenter \Delta$
\LL{\fns $\aor_L$}
\BI$\varphi \aor \psi \fCenter \Delta$
\DP
 & 
\AX$\Gamma \fCenter \varphi$
\RL{\fns $\aor_{R1}$}
\UI$\Gamma \fCenter \varphi \aor \psi$
\DP
 \ 
\AX$\Gamma \fCenter \psi$
\RL{\fns $\aor_{R2}$}
\UI$\Gamma \fCenter \varphi \aor \psi$
\DP
 \\
\end{tabular}

\end{center}

\begin{itemize}
\item Logical introduction rules for $f\in\mathcal{F}$ and $g\in\mathcal{G}$:
\end{itemize}
				\begin{center}
					\begin{tabular}{c c}
						\bottomAlignProof
						\AX$\FH\, (\ophi) \fCenter \Delta$
						\LL{\fns$f_L$}
						\UI$f(\ophi) \fCenter \Delta$
						\DP
						&
						\bottomAlignProof
						\AX$\Gamma \fCenter \GC\, (\ophi)$
						\RL{\fns$g_R$}
						\UI$\Gamma \fCenter g(\ophi)$
						\DP
					\end{tabular}
                \end{center}
                \begin{center}
					\begin{tabular}{ccc}
						\bottomAlignProof
						\AxiomC{$\Big(\Sigma_i \fCenter^{\!\!\epsilon_{f,i}}\, \varphi_i  \mid 1\leq i\leq n_f\Big)$}
						\RL{\fns$f_R$}
						\UI$\FH\, (\OSI)\fCenter f(\ophi)$
						\DP
					\end{tabular}
                    \end{center}
where  $\Sigma_i {\fCenter^{\!\!\epsilon_{f,i}}}\, \varphi_i$ is $\Sigma_i\fCenter \varphi_i$  (resp.~$\varphi_i\fCenter \Sigma_i$) if $\epsilon_{f,i}=1$ (resp.~$\epsilon_{f,i}=\partial$). 
                    \begin{center}
					\begin{tabular}{c c c }
						\bottomAlignProof
						\AxiomC{$\Big(\varphi_i \fCenter^{\!\!\epsilon_{{g,i}}}\; \Sigma_i \,\mid\, 1\leq i\leq n_g \Big)$}
						\LL{\fns$g_L$}
						\UI$g(\ophi) \fCenter \GC\, (\OSI)$
						\DP
					\end{tabular}
     \end{center}
     where  $\varphi_i \fCenter^{\!\!\epsilon_{g,i}}\; \Sigma_i$ is $\varphi_i\fCenter \Sigma_i$  (resp.~$\Sigma_i\fCenter \varphi_i$) if $\epsilon_{g,i}=1$ (resp.~$\epsilon_{g,i}=\partial$).

				In particular, if $f$ and $g$ are $0$-ary (i.e.~they are constants), the rules $f_R$ and $g_L$ above reduce to the axioms ($0$-ary rules) $\FH \fCenter f$ and $g \fCenter \GC$.

Let $\mathrm{cfD.LE}$ (resp.~$\mathrm{cfD.LE}$) denote the calculus obtained by removing Cut in $\mathrm{D.LE}$. In what follows, we indicate that the sequent $\varphi \Rightarrow \psi$ is derivable in $\mathrm{D.LE}$ (resp.~in $\mathrm{cfD.LE}$) by $\vdash_{\mathrm{D.LE}} \varphi \Rightarrow \psi$ (resp.~by $\vdash_{\mathrm{cfD.LE}} \varphi \Rightarrow \psi$).

\begin{thm}(\cite[Section 4.2]{greco2018unified})\label{prop:soundness of D.LE}
The calculus $\mathrm{D.LE}$ (and hence also $\mathrm{cfD.LE}$) is sound and complete with respect to the class of  complete $\mathcal{L}$-algebras. Furthermore the calculus $\mathrm{D.LE}$ is conservative w.r.t.\ the corresponding $\mathrm{LE}$-logic.
\end{thm}

In the presentation of the language of the calculus above we use $\Gamma_1, \Gamma_2, \ldots$ and $\Delta_1, \Delta_2, \ldots$ as meta-variables for structures. To formally present analytic structural rules, we need to utilize meta-structures, i.e., structures that are constructed by structural meta-variables. In what follows, we will introduce explicitly a language of meta-variables and meta-terms, which will be useful in the remainder of this paper. Let $\mathsf{MVar} = \mathsf{MVar}_{\mathcal{F}}\uplus \mathsf{MVar}_{\mathcal{G}}$ be the denumerable set of {\em meta-variables} of sorts $\Gamma_1, \Gamma_2, \ldots\in \mathsf{MVar}_{\mathcal{F}}$ and $\Delta_1, \Delta_2, \ldots \in \mathsf{MVar}_{\mathcal{G}}$.
The sets $\mathsf{MStr}_\mathcal{F}$ and $\mathsf{MStr}_\mathcal{G}$ of the $\mathcal{F}$- and $\mathcal{G}$-{\em meta-structures} are defined by simultaneous induction as follows:

\begin{center}
\begin{tabular}{c}
$\mathsf{MStr}_\mathcal{F} \ni S \ \cceq \  \Gamma \mid \FH(\overline{S})$\\

$\mathsf{MStr}_\mathcal{G} \ni T \ \cceq \  \Delta \mid \GC(\overline{T})$
\end{tabular}
\end{center}
where $\Gamma\in \mathsf{MVar}_{\mathcal{F}}$, $\Delta\in \mathsf{MVar}_{\mathcal{G}}$, $\FH \in \mathcal{F}^\ast$ and $\GC \in \mathcal{G}^\ast$ and $\overline{S} \in \mathsf{MStr}_\mathcal{F}^{\epsilon_f}$,  and $\overline{T} \in \mathsf{MStr}_\mathcal{G}^{\epsilon_g}$, and for any order type $\epsilon$ on $n$, we let $\mathsf{MStr}_\mathcal{F}^{\epsilon} \ceq \prod_{i = 1}^{n}\mathsf{MStr}_\mathcal{F}^{\epsilon_i}$ and $\mathsf{MStr}_\mathcal{G}^{\epsilon} \ceq \prod_{i = 1}^{n}\mathsf{MStr}_\mathcal{G}^{\epsilon_i}$, where for all $1 \leq i \leq n$,

\begin{center}
\begin{tabular}{ll}
$\mathsf{MStr}_\mathcal{F}^{\epsilon_i} = \begin{cases} 
\mathsf{MStr}_\mathcal{F} &\mbox{ if } \epsilon_i = 1\\
\mathsf{MStr}_\mathcal{G} &\mbox{ if } \epsilon_i = \partial
\end{cases}\quad$
&
$\mathsf{MStr}_\mathcal{G}^{\epsilon_i} = \begin{cases}
\mathsf{MStr}_\mathcal{G}& \mbox{ if } \epsilon_i = 1,\\
\mathsf{MStr}_\mathcal{F} & \mbox{ if } \epsilon_i = \partial.
\end{cases}$
\end{tabular}
\end{center}

An analytic structural rule is formally represented by a rule of the form
\begin{center}
\AXC{$(S_1\Rightarrow T_1)[\OSI_1]$}
\AXC{$\cdots$}
\AXC{$(S_n\Rightarrow T_n)[\OSI_n]$}
\TIC{$(S_0 \Rightarrow T_0)[\OSI_0]$}
\DP
\end{center}
where $\OSI_i$ with $0 \leq i \leq n$ is the set of meta-variables occurring in each sequent $S_i \Rightarrow T_i$, $\OSI_0 \supseteq \OSI_1 \cup \ldots \cup \OSI_n$, and while structural meta-variables might occur multiple times in the premises they occur only once in the conclusion. 
An instance of the rule $R$ is obtained from $R$ by uniformly substituting each structural meta-variable $\Gamma\in \mathsf{MVar}_{\mathcal{F}}$ with an element of $\mathsf{Str}_{\mathcal{F}}$ and every structural meta-variable $\Delta\in \mathsf{MVar}_{\mathcal{G}}$ with an element of $\mathsf{Str}_{\mathcal{G}}$. A calculus $\mathcal{D}$ contains the analytic rule $R$ if it contains every instance of $R$.

\begin{example}\label{ex:rule}
 Given the LE-language $\mathcal{L}_\mathrm{LE}(\{\mand\},\{\wbox,\backslash\})$ considered in Example \ref{ex:1}, the following is an analytic structural rule: 
\begin{center}
\AX$\Gamma_1 \fCenter (\Gamma_2 \MAND \Gamma_3) \MRARR \WBOX\Delta$
\UI$\Gamma_1 \MAND \BDIA\Gamma_2 \fCenter \Delta \MLARR  \Gamma_3$
\DP
\end{center}
This rule is such that $\Gamma_1, \Gamma_2$, $\Gamma_3$ and $\Delta$ are meta-variables which range over $\mathsf{Str}_{\mathcal{F}}$, and $\mathsf{Str}_{\mathcal{G}}$ respectively, and $(\Gamma_2 \MAND \Gamma_3) \MRARR \WBOX\Delta$, $\Gamma_1 \MAND \BDIA\Gamma_2$ and $\Delta \MLARR  \Gamma_3$ are meta-terms. 
\end{example}

Given any  $\mathrm{LE}$-logic, $\mathcal{L}$, the class of axioms which correspond to analytic structural rules has been characterized in \cite{CiRa14,greco2018unified,ChnGrePalTzi21}. In Appendix \ref{ap:display} we extensively discuss Belnap's presentation of display calculi \cite{Belnap} via conditions C1-C8, and show how the presentation in this section satisfies those conditions.

\section{Functional $\mathcal{L}$-frames}
\label{sec: functional L-frames}
In this section we introduce functional $\mathcal{L}$-frames, which will be the main  semantic environment for the proof of semantic cut elimination.

\subsection{Functional $\mathcal{L}$-frames}\label{rmk:functional implies compatible} An $\mathcal{L}$-frame is {\em functional} if for all $f\in \mathcal{F}$ (resp.~$g\in \mathcal{G}$), the relation $R_f$ (resp.~$R_g$) is {\em functional}, i.e.\ there exists an $n$-ary partial function $f^*:W^{\epsilon_f}\to W$ (resp.~$g^*:(W^{^\partial})^{\epsilon_g}\to W^\partial$) such that $R_f(u,\overline{w})$ iff $f^*(\overline{w})Nu$ (resp.~$R_g(w,\overline{u})$ iff $wNg^*(\overline{u})$). 

 For every polarity $\mathbb{W} = (W, W^\partial, N)$ and any  $f^*:W^{\epsilon_f}\to W$ (resp.~$g^*:(W^{^\partial})^{\epsilon_g}\to W^\partial$),  $f^\ast$ has a {\em pseudo-residual} in its $i$th coordinate if a map  ${f^\ast}^\sharp_i:W^{\epsilon_{f^\sharp_i}} \to W^{\epsilon_{f,i}}$ (resp.~${g^\ast}^\flat_i: (W^\partial)^{\epsilon_{g^\flat_i}}\to (W^\partial)^{\epsilon_{g,i}}$) exists such that
\[f^\ast(\overline{w})N u \quad \text{ iff }\quad w_iN^{\epsilon_{f,i}}{f^\ast}^\sharp_i(\bari{w}(u))\quad \quad 
wN g^\ast(\overline{u}) \quad \text{ iff }\quad {g^\ast}^\flat_i(\bari{u}(w))N^{\epsilon_{g,i}}u_i.\]
If $f^\ast$ (resp.~$g^\ast$) has pseudo-residuals in each coordinate, the relation $R_f\subseteq W^\partial\times W^{\epsilon_f}$ (resp.~$R_g\subseteq W\times (W^\partial)^{\epsilon_g}$) defined as $R_f(u,\overline{w})$ iff $f^*(\overline{w})Nu$ (resp.~$R_g(w,\overline{u})$ iff $wNg^*(\overline{u})$) is $N$-compatible: indeed, $R_f^{(0)}[\overline{w}] = \{u\in W^\partial\mid f^*(\overline{w})Nu\} = f^*(\overline{w})^{\uparrow}$ which is stable by general properties of Galois connections. Likewise, for every $1\leq i\leq n_f$ (resp.~$1\leq i\leq n_g$), the set $R_f^{(i)}[u, \bari{w}] = \{w_i\in W^{\epsilon_{f,i}}\mid f^*(\bari{w}(w_i))Nu\} = \{w_i\in W^{\epsilon_{f,i}}\mid w_iN^{\epsilon_{f,i}}{f^\ast}^\sharp_i(\bari{w}(u))\} = {f^\ast}^\sharp_i(\bari{w}(u))^{\partial}$ (resp.~$R_g^{(i)}[v, \bari{u}] = {g^\ast}^\flat_i(\bari{u}(v))^{\partial}$) is stable. Therefore, by items 1 and 2 of Lemma \ref{lem:Ri closed}, for any $\overline{X}, \overline{Y}$, the following sets are stable, and moreover, from the identities above it follows that:
\begin{equation}\label{eq:functional relation 2}
R_f^{(0)}[\overline{X}] = \bigcap_{\overline{x}\in \overline{X}} f^\ast(\overline{x})^{\uparrow} = N^{(1)}[f^\ast[\bigcup_{\overline{x}\in \overline{X}} \{\overline{x}\}]] = (f^\ast[\overline{X}])^{\uparrow}\quad \text{ and }\quad R_g^{(0)}[\overline{Y}] = (g^\ast[\overline{Y}])^{\downarrow}. \end{equation}
Likewise,  $R_f^{(i)}[Y, \bari{X}] = ({f^\ast}^\sharp_i[\bari{X}(Y)])^{\partial}$ and $R_g^{(i)}[X, \bari{Y}] = ({g^\ast}^\flat_i[\bari{Y}(X)])^{\partial}$.

 As a consequence of these identities, and of items 3 and 4 of Lemma \ref{lem:Ri closed}, for all $1 \leq i \leq n_f$ and  $1 \leq i \leq n_g$,
 \begin{equation} \label{eq:functional relation 3}(f^\ast[\overline{X}])^{\uparrow} = (f^\ast[\overline{\gamma_N(X)}])^{\uparrow}\quad \text{ and } (g^\ast[\overline{Y}])^{\downarrow}=(g^\ast[\overline{\gamma_N(Y)}])^{\downarrow},\end{equation}
 where the $i$-th component of e.g.\ $\overline{\gamma_N(X)}$ is obtained by applying $\gamma_N$ to the $i$-th component of $\overline{X}$.
A {\em fully residuated} functional $\mathcal{L}$-frame is a structure $\mathbb{F} = (\mathbb{W}, \mathcal{R}_{\mathcal{F}}, \mathcal{R}_{\mathcal{G}})$ as above such that every $R_f\in \mathcal{R}_{\mathcal{F}}$ (resp.~$R_g\in \mathcal{R}_{\mathcal{G}}$) is functional and its corresponding map $f^\ast$ (resp.~$g^\ast$) has pseudo-residuals in each coordinate.

\subsection{Interpretation of meta-structures and sequents on complex algebras of fully residuated functional frames} 
In this section we will show that the interpretation of meta-structures on the complex algebras of functional frames can be approximated by the functions of the frame.  
  
For every fully residuated functional $\mathcal{L}$-frame $\mathbb{F}$, we will identify any assignment $h:\mathsf{MVar}\to \mathbb{F}^+$ with its unique homomorphic extension, and hence write both $h(\Gamma)$ and $h(\Delta)$.

\begin{definition}\label{def:image}
	For any fully residuated functional $\mathcal{L}$-frame $\mathbb{F}$, any $h:\mathsf{MVar}\to \mathbb{F}^+$, any $S\in \mathsf{MStr}_\mathcal{F}$ and $T\in\mathsf{MStr}_\mathcal{G}$, the subsets $h\{S\} \subseteq W$ and $h\{T\} \subseteq W^\partial$ are defined by simultaneous recursion as follows: 
	\begin{enumerate}
		\item $h\{\Gamma\} \ceq h(\Gamma)$ and $h\{\Delta\} = h(\Delta)^\uparrow$;
		\item $h\{\FH(\overline{S})\} \ceq f^\ast[ \overline{h\{S\}}] = \{f^\ast(\overline{x})\mid \overline{x}\in \overline{h\{S\}}\}$ for every $f\in \mathcal{F}$;
  \item $h\{\FHS_i(\overline{S})\} \ceq {f^\ast}^\sharp_i[ \overline{h\{S\}}] = \{{f^\ast}^\sharp_i(\overline{x})\mid \overline{x}\in \overline{h\{S\}}\}$ for every $f\in \mathcal{F}$ s.t.~$\epsilon_{f,i} = \partial$;
  \item $h\{\GHF_i(\overline{T})\} \ceq {g^\ast}^\flat_i[ \overline{h\{T\}}] = \{{g^\ast}^\flat_i(\overline{y})\mid \overline{y}\in \overline{h\{T\}}\}$ for every $g\in \mathcal{G}$ s.t.~$\epsilon_{g,i} = 1$;
		\item $h\{\GC(\overline{T})\} \ceq g^\ast[\overline{h\{T\}}]= \{g^\ast(\overline{y})\mid \overline{y}\in \overline{h\{T\}}\}$ for every $g\in \mathcal{G}$; 
\item $h\{\GCF_i(\overline{T})\} \ceq {g^\ast}^\flat_i[\overline{h\{T\}}]= \{{g^\ast}^\flat_i(\overline{y})\mid \overline{y}\in \overline{h\{T\}}\}$ for every $g\in \mathcal{G}$ s.t.~$\epsilon_{g,i} = \partial$;
\item $h\{\FCS_i(\overline{S})\} \ceq {f^\ast}^\sharp_i[ \overline{h\{S\}}] = \{{f^\ast}^\sharp_i(\overline{x})\mid \overline{x}\in \overline{h\{S\}}\}$ for every $f\in \mathcal{F}$ s.t.~$\epsilon_{f,i} = 1$.
	\end{enumerate}
 where $\overline{S} \subseteq \mathsf{MStr}^{\epsilon_f}_\mathcal{F}$, $\overline{h\{S\}} \ceq \prod_{i = 1}^{n_f}h\{S^{\epsilon_{f,i}}\}$ (resp.~$\overline{T} \subseteq \mathsf{MStr}^{\epsilon_g}_\mathcal{G}$, $\overline{h\{T\}} \ceq \prod_{i=1}^{n_g}h\{T^{\epsilon_{g,i}}\}$), such that
	\begin{center}
 \begin{tabular}{ccc}
	$S^{\epsilon_{f,i}} \in \begin{cases} 
	\mathsf{MStr}_\mathcal{F}&\mbox{ if } \epsilon_{f,i} = 1\\
	\mathsf{MStr}_\mathcal{G} &\mbox{ if } \epsilon_{f,i} = \partial
	\end{cases}$ &$\quad$ &
	
$
	T^{\epsilon_{g,i}} \in \begin{cases} 
	\mathsf{MStr}_\mathcal{G}&\mbox{ if } \epsilon_{f,i} = 1\\
	\mathsf{MStr}_\mathcal{F} &\mbox{ if } \epsilon_{f,i} = \partial.
	\end{cases}$\\
	\end{tabular}
 \end{center} 
	 
\end{definition}

\begin{example}
Given the language $\mathcal{L}$ of the running Example \ref{ex:1}, a fully residuated functional $\mathcal{L}$-frame, $\mathbb{F}$, and an assignment $h:\mathrm{MVar}\to\mathbb{F}^+$ we have $$h\{\Delta \MLARR  \Gamma_3\}=\{w\mlarr^{\ast}u\mid w\in h(\Delta)^\uparrow\ \&\ u\in h(\Gamma_3)\}$$ and $$h\{\Gamma_1 \MAND \BDIA\Gamma_2\}=\{w\mand^\ast (\bdia^\ast u)\mid w\in h(\Gamma_1)\ \&\ u\in h(\Gamma_2)\}.$$    
\end{example}

\begin{lemma}\label{lemma:h curly is good}
    For any $h:\mathsf{MVar}\to \mathbb{F}^+$, any  $S\in \mathsf{MStr}_\mathcal{F}$  and $T\in \mathsf{MStr}_\mathcal{G}$, 
    \begin{align*}h\{S\}^{\uparrow\downarrow}=h(S)\text{ and }h\{T\}^{\downarrow}=h(T).\end{align*}
\end{lemma}
\begin{proof} Notice that $h\{T\}^{\downarrow}=h(T)$ is equivalent to $h\{T\}^{\downarrow\uparrow}=h(T)^\uparrow$.
    The proof proceeds by simultaneous induction on $S$ and $T$. The base case is immediate by Definition \ref{def:image}.1.  For the induction step, let $S$ be $\FH(\overline{S})$ for  $\overline{S}\in \mathsf{MStr}^{\epsilon_f}_\mathcal{F}$ and assume that the induction hypothesis holds for every $1\leq i\leq n_f$, i.e.\ $\gamma_N(h\{S_i^{1}\})=h(S_i^{1})$ if $\epsilon_{f,i}=1$ and $\gamma_N(h\{S_i^{\partial}\})= h(S_i^{\partial})^{\uparrow}$ if $\epsilon_{f,i}=\partial$. Then:
    \begin{align*}
	h\{\FH(\overline{S})\}^{\uparrow\downarrow} &=f^\ast[\overline{h\{S\}}]^{\uparrow\downarrow} & \mbox{Definition \ref{def:image}.2}\\
	&= f^\ast[\overline{\gamma_N(h\{S\})}]^{\uparrow\downarrow} & \mbox{Equation  \ref{eq:functional relation 3}}\\
    &= f^\ast[\overline{h(S)}]^{\uparrow\downarrow} & \mbox{induction hypothesis}\\
    &= R^{(0)}_f[\overline{h(S)}]^{\downarrow} & \mbox{Equation  \ref{eq:functional relation 2}}\\
	&= f_{R_f}(\overline{h(S)})  &  \mbox{Definition \ref{def:complexalg}}\\
	& = h(\FH(\overline{S})) &  \mbox{$h$ is a homomorphism,}
	\end{align*}
where the $i$-th coordinate of $\overline{h(S)}$ is $h(S_i^{1})$ if $\epsilon_{f,i}=1$ and $h(S_i^{\partial})^{\uparrow}$ if $\epsilon_{f,i}=\partial$.

The case in which $T$ is of the form $\GC(\overline{T})$ for  $\overline{T}\in \mathsf{MStr}^{\epsilon_g}_\mathcal{G}$ is shown similarly.
\end{proof}

\begin{lem}\label{prop:key soundness}
The following are equivalent:
\begin{enumerate}
	\item $h(S) \subseteq h(T)$;
\item	$s N t$ for every $s\in h\{S\} $ and $t \in h\{T\}$.
\end{enumerate}
\end{lem}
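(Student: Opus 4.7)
The plan is to observe that condition (2) can be reformulated purely in terms of the polar maps: saying that $sNt$ for all $s \in h\{S\}$ and $t \in h\{T\}$ is precisely the statement that $h\{S\} \subseteq h\{T\}^{\downarrow}$, or equivalently (by the Galois connection between $(\cdot)^{\uparrow}$ and $(\cdot)^{\downarrow}$) that $h\{T\} \subseteq h\{S\}^{\uparrow}$. Once this is in place, both directions of the equivalence should fall out of Lemma \ref{lem:incl} together with the fact that $h(S)$ and $h(T)$ are stable subsets of $W$ and $U$ respectively (since they are values of a homomorphism into $\mathbb{F}_{\mathrm{D}}^+$).

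For the direction $(1)\Rightarrow(2)$, I would chain the following inclusions, using Lemma \ref{lem:incl}.1 at both ends and the hypothesis in the middle:
\[
h\{S\}\ \subseteq\ h(S)\ \subseteq\ h(T)\ \subseteq\ h\{T\}^{\downarrow},
\]
where the first inclusion is Lemma \ref{lem:incl}.1 applied to $S$, and the last is the reformulation of Lemma \ref{lem:incl}.1 applied to $T$. This immediately gives $sNt$ for every $s \in h\{S\}$ and $t \in h\{T\}$.

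For the converse direction $(2)\Rightarrow(1)$, I would start from the equivalent form $h\{T\} \subseteq h\{S\}^{\uparrow}$ of (2), apply Lemma \ref{lem:incl}.2 to obtain $h\{T\} \subseteq h(S)^{\uparrow}$, and then take the $(\cdot)^{\downarrow}$ of both sides (using antitonicity) to get $h(S)^{\uparrow\downarrow} \subseteq h\{T\}^{\downarrow}$. A second application of Lemma \ref{lem:incl}.2 yields $h\{T\}^{\downarrow} \subseteq h(T)$, so combining the two inclusions gives $h(S)^{\uparrow\downarrow} \subseteq h(T)$. Finally, since $h(S)$ is stable, $h(S) = h(S)^{\uparrow\downarrow}$, and we conclude $h(S) \subseteq h(T)$.

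There is no real obstacle here beyond bookkeeping: the work has already been done in Lemma \ref{lem:incl}, and the proof is a short Galois-connection manipulation. The only thing to watch for is orienting the inequalities correctly (i.e., remembering that $(\cdot)^{\uparrow}$ and $(\cdot)^{\downarrow}$ are antitone and that $h(S), h(T)$ are Galois-stable), so that the passage from the ``inner'' sets $h\{S\},h\{T\}$ (which need not be stable) to the ``outer'' algebra elements $h(S),h(T)$ is justified in both directions.
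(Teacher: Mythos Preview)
Your proof is correct and follows essentially the same approach as the paper: the direction $(1)\Rightarrow(2)$ is identical, and for $(2)\Rightarrow(1)$ you use the same two applications of Lemma~\ref{lem:incl}.2 and the stability of $h(S)$, only organized slightly differently (you argue directly that $h(S)=h(S)^{\uparrow\downarrow}\subseteq h\{T\}^{\downarrow}\subseteq h(T)$, whereas the paper dualizes first to show $h(T)^\uparrow\subseteq h(S)^\uparrow$).
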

\begin{proof}
	$1\Rightarrow 2$. By Lemma \ref{lemma:h curly is good},  $h(S) \subseteq h(T)$ implies that $h\{S\}\subseteq h\{T\}^{\downarrow}$. This means that $sN t$ for every $s\in h\{S\} $ and $t \in h\{T\}$. 
	
	$2\Rightarrow 1$. If $s N t$ for every $s\in h\{S\} $ and $t \in h\{T\}$, then $h\{S\}\subseteq h\{T\}^{\downarrow}$, which by Lemma \ref{lemma:h curly is good} implies that $h\{S\}\subseteq h(T)$. Since $h(T)$ is a stable set, it follows that $h\{S\}^{\uparrow\downarrow}\subseteq h(T)$, which again by Lemma \ref{lemma:h curly is good} means that $h(S)\subseteq h(T)$, as required.
\end{proof}
\begin{remark}
    Lemmas \ref{lemma:h curly is good} and \ref{prop:key soundness} clarify the connection between the operators on the complex algebra of a fully residuated $\mathcal{L}$-frame and its functions defining the compatible relations. Indeed, given an assignment for variables on the complex algebra, Lemma \ref{lemma:h curly is good} says that to calculate the value of the composition of operators on the complex algebra it is enough to take the stable closure of the image of the valuation through the functions of the functional frame. Lemma \ref{prop:key soundness} shows then that term inequalities in the complex algebra can be verified through the images of the valuation through the functions of the frame. This property of fully residuated functional frames is key for showing the cut-elimination via a semantic route. As we show in the following section, a cut-free calculus can be semantically represented by a functional frame, since the  relation $\Rightarrow$ is not transitive, while the calculus augmented with cut can be semantically represented by the corresponding complex algebra of that frame. The tight connection between the frame and complex algebra illustrated by Lemma \ref{prop:key soundness} is key to show the semantic cut elimination for any analytic extension of the basic calculus via a Truth lemma style argument (see Lemma \ref{coro:lifting map} below) since it implies that the content of adding an analytic rule to a calculus with or without cut is the same, i.e.\ the algebraic quasi-equation corresponding to the rule. This is precisely formulated in Proposition \ref{prop:soundness} in the  following section.
\end{remark}

\section{Functional $\mathrm{D}$-frames and soundness of analytic structural rules on corresponding $\mathrm{D}$-frames}\label{sec:dframes}

In the present section, we introduce functional $\mathrm{D}$-frames, which are fully residuated functional frames arising from cut-free display calculi. 
Functional $\mathrm{D}$-frames are the counterparts, in the setting of LE-logics, of Gentzen frames \cite[Section 2]{galatos2013residuated}. We identify their key property in the form of a `Truth lemma', and use it to show that analytic structural rules are sound in their corresponding functional $\mathrm{D}$-frames.

\subsection{Functional $\mathrm{D}$-frames}
\label{ssec:functional D-frames}
Recall that  $\mathrm{D.LE}$ and $\mathrm{cfD.LE}$ respectively denote the display calculus for the basic normal $\mathcal{L}$-logic and
 its cut-free version. Moreover we let $\mathrm{D.LE'}$ and $\mathrm{cfD.LE'}$ denote the extensions of  $\mathrm{D.LE}$ and $\mathrm{cfD.LE}$ with some analytic structural rules.
\begin{definition}\label{def: dfram}
	  	 Let $\mathrm{D} \in \{\mathrm{D.LE}, \mathrm{D.LE'}, \mathrm{cfD.LE}, \mathrm{cfD.LE'}\}$. A  {\em functional $\mathrm{D}$-frame} is a structure $\mathbb{F}_{\mathrm{D}} \ceq (W, W^\partial, N, \mathcal{R}_{\mathcal{F}}, \mathcal{R}_{\mathcal{G}})$, where 
	\begin{enumerate}
                 \item $W \ceq \mathsf{Str}_\mathcal{F}$, $W^\partial \ceq \mathsf{Str}_\mathcal{G}$ and $N\subseteq W\times W^\partial$,
 		\item for every $f\in\mathcal{F}$ and $\overline{x} \in W^{\epsilon_f}$, $R_f(y, \overline{x})$ iff $\FH(\overline{x}) N y$,
		\item for every $g\in\mathcal{G}$ and $\overline{y} \in W^{\epsilon^\partial_g}$, $R_g(x,\overline{y})$ iff $ xN\GC(\overline{y})$ and 
		\item for any instance of any rule in $\mathrm{D}$ (including zero-ary rules)
		\[
		\AXC{$x_1\Rightarrow y_1, \ldots, x_n\Rightarrow y_n$}
		\UIC{$x \Rightarrow y$}
		\DP
		\]
		 $N$ is closed under the corresponding rule
		\[
		\AXC{$x_1 N y_1, \ldots, x_n N y_n$}
		\UIC{$x N y$}
		\DP
		\]
\end{enumerate}
\end{definition}
It is straightforward to show, by induction on the height of derivations in $\mathrm{D}$, that for every $x \in W$ and $y \in W^\partial$, if $\vdash_{\mathrm{D}} x \Rightarrow y$ then $x N y$. By definition, $\mathbb{F}_{\mathrm{D}}$ is functional (cf.~Definition \ref{def: LE frame}) for $f^\ast \ceq \FH$ and $g^\ast \ceq \GH$, and these maps have pseudo-residuals in each coordinate (namely, ${f^\ast}_i^\sharp \ceq \FHS_i$ and ${g^\ast}_i^\flat \ceq \GHF_i$ in each coordinate $i$), i.e.~$\mathbb{F}_{\mathrm{D}}$ is fully residuated (cf.~Section \ref{sec: functional L-frames}). 
As discussed in the same section, this implies that each associated relation $R_f$ and $R_g$ is $N$-compatible (cf.~Definition \ref{def: LE frame});  hence, 
$\mathbb{F}_{\mathrm{D}}$ is an $\mathcal{L}$-frame, and so, by Proposition  \ref{prop:F plus is L star algebra}, $\mathbb{F}_{\mathrm{D}}^+$ is a complete $\mathcal{L}$-algebra (and hence also an $\mathcal{L}^\ast$-algebra).

\subsection{`Truth lemma' in functional $\mathrm{D}$-frames}

In the present section, we let $\mathrm{D} \in \{\mathrm{D.LE}, \mathrm{D.LE'}, \mathrm{cfD.LE}, \mathrm{cfD.LE'}\}$. We will also denote $\overline{X}^{\partial}:= (X^{\partial}_1, \ldots, X_n^{\partial})$ and let $N^{\partial}$ be the converse of $N$.

\begin{lemma}\label{lemma:basiccalc}For any  functional $\mathrm{D}$-frame $\mathbb{F}$, any $f\in \mathcal{F}$  and $g\in \mathcal{G}$:
\begin{enumerate}
    \item If $\FH(\overline{x})Ny$  and each $x_i$ in $\overline{x}$ is a formula, then $f(\overline{x})Ny$. 
    \item If $\FH(\overline{x})\in X\subseteq W$ and each $x_i$ in $\overline{x}$ is a formula, then $f(\overline{x})\in\gamma_N(X)$.
    \item If $X_i\subseteq W^{\epsilon_{f,i}}$ and $\overline{x} \in \overline{X}^{\partial}$, and each $x_i$ in $\overline{x}$ is a formula, then $f(\overline{x})\in (\FH[\overline{X}])^\uparrow$.
    \item If $xN\GC(\overline{y})$ and each $y_i$ in $\overline{y}$  is a formula, then $xN g(\overline{y})$. 
    \item  If $\GC(\overline{y})\in Y\subseteq W^\partial$ and each $y_i$ in $\overline{y}$ is a formula, then $g(\overline{y})\in\gamma_N(Y)$.
    \item If $Y_i\subseteq (W^\partial)^{\epsilon_{g,i}}$ and $\overline{y} \in \overline{Y}^{\partial}$,  and each $y_i$ in $\overline{y}$  is a formula, then $g(\overline{y})\in (\GC[\overline{Y}])^\downarrow$.
\end{enumerate}
\end{lemma}
\begin{proof}
    1.  If $\FH(\overline{x}) N y$ and each $x_i$ is a formula,  by the rule  $(f_L)$ and Definition \ref{def: dfram}.4, we obtain that $f(\overline{x}) N y$. 

    2. Since $\FH(\overline{x})\in X$, $\FH(\overline{x})N y$ for all $y\in X^{\uparrow}$. By item 1, it follows that $f(\overline{x})N y$, for all $y\in X^{\uparrow}$. Hence $f(\overline{x})\in X^{\uparrow\downarrow}$.
    
    3. The assumption that $\overline{x} \in \overline{X}^{\partial}$ implies that $z_i N^{\epsilon_{f,i}}x_i$ for every $1 \leq i\leq n$ and every $z_i\in X_i$. By the rule  $(f_R)$ and Definition \ref{def: dfram}.4, we obtain that $\FH(\overline{z}) N f(\overline{x})$ for every $\overline{z}\in \overline{X}$, i.e.\ $f(\overline{x})\in (\FH[\overline{X}])^\uparrow$.
    The proofs of the remaining items are dual.
\end{proof}

\begin{lemma}\label{lem:join and meet}
For any  functional $\mathrm{D}$-frame $\mathbb{F}$, and all $\varphi,\psi \in \mathcal{L}$, 
	\begin{enumerate}
		\item $\bot N y$ for all $y\in W^\partial$ and $x N \top$ for all $x\in W$.
   
        \item If $Y_1, Y_2 \subseteq W^\partial$, $\varphi\in Y_1^{\downarrow}$ and $\psi\in Y_2^{\downarrow}$, then $\varphi\land\psi\in Y_1^{\downarrow}\cap Y_2^{\downarrow}$ and $\varphi\lor\psi\in (Y_1\cap Y_2)^{\downarrow}$.

	      \item If $X_1,X_2 \subseteq W$, $\varphi\in X_1^{\uparrow}$ and $\psi\in X_2^{\uparrow}$,  then $\varphi\lor\psi \in X_1^{\uparrow}\cap X_2^{\uparrow}$ and $\varphi\land\psi\in (X_1\cap X_2)^{\uparrow}$.
       
			\end{enumerate}
\end{lemma}
\begin{proof}
1. Rule $(\abot_L)$ implies that $\bot N \ABOT$, which implies  $\bot N y$ by rule $(\abot_W)$. The case for $\top$ is dual.

2. The assumptions $\varphi\in Y_1^{\downarrow}$ and $\psi\in Y_2^{\downarrow}$ are equivalent to $\varphi N y_1$ and $\psi N y_2$ for every $y_1 \in Y$ and $y_2 \in Y_2$. By the rule $(\mathrm{\wedge_L})$ and Definition \ref{def: dfram}.4, this implies that $(\varphi\land\psi) N y_1$ and $(\varphi\land\psi) N y_2$  for every $y_1 \in Y_1$ and $y_2 \in Y_2$, which shows that $\varphi\land\psi\in Y_1^{\downarrow}$ and $\varphi\land\psi\in Y_2^{\downarrow}$, i.e. $\varphi\land\psi\in Y_1^{\downarrow}\cap Y_2^{\downarrow}$, which proves the first part of the claim. 
As to the second part, the assumptions imply that $\varphi N y$ and $\psi N y$ for every $y \in Y_1\cap Y_2$ . By  the rule $(\mathrm{\vee_L})$ and Definition \ref{def: dfram}.4, we obtain that  $(\varphi\lor\psi) N y$ for every $y \in Y_1\cap Y_2$, therefore $\varphi\lor\psi\in (Y_1\cap Y_2)^{\downarrow}$ as required. The proof of item 3 is dual.
\end{proof}

\begin{lemma}[Truth lemma]\label{coro:lifting map}
For any  functional $\mathrm{D}$-frame $\mathbb{F}$, let $h: \mathsf{Str}_\mathcal{F} \cup \mathsf{Str}_\mathcal{G} \rightarrow \mathbb{F}^+_{\mathrm{D}}$ be the unique homomorphic extension of the assignment $p \mapsto \{p\}^\downarrow$.   Then $x \in h(x)$ for any $x \in \mathsf{Str}_\mathcal{F}$,  and  $y \in h(y)^{\uparrow}$ for any $y \in \mathsf{Str}_\mathcal{G}$. 
\end{lemma}
\begin{proof}
The proof proceeds by simultaneous induction on the number of structural and operational connectives of $x$ and $y$. If  $x$ or $y$ is an atomic proposition $p$, then $p\in p^{\downarrow} = h(p)$ because  the identity axiom in $\mathrm{D}$ implies that $pNp$; clearly, $p\in p^{\downarrow\uparrow} = h(p)^{\uparrow}$. If $x$ is the constant $\AATOP$ or $\top$, then $x\in W = h(\top)$. If $y$ is the constant $\top$, then by Lemma \ref{lem:join and meet}.1 $\top\in W^{\partial}=h(\top)^\uparrow$. The argument for  $y$ being $\ABOT$, $\bot$, or $x$ is $\bot$ is dual. 

If $x$ or $y$ is $\varphi \vee \psi$, 
 then  by induction hypothesis, $\varphi\in h(\varphi)^{\uparrow}$ and  $\psi\in h(\psi)^{\uparrow}$, so by Lemma \ref{lem:join and meet}.3, $ \varphi \vee \psi\in h(\varphi)^{\uparrow}\cap h(\psi)^{\uparrow} = h(\varphi\vee \psi)^\uparrow$. Moreover, by induction hypothesis, $\varphi\in h(\varphi) =h(\varphi)^{\uparrow\downarrow}$ and  $\psi\in h(\psi) = h(\psi)^{\uparrow\downarrow}$, so by Lemma \ref{lem:join and meet}.2, $ \varphi \vee \psi\in (h(\varphi)^{\uparrow}\cap h(\psi)^{\uparrow})^\downarrow = h(\varphi \vee \psi)$.
  The proof for  $x$ or $y$ being $\varphi \wedge \psi$ is dual. 
  
  If $x$ is $\FH(\overline{x'})$, then by induction hypothesis $\overline{x'}\in\overline{h(x')}^{\epsilon_{f}}$, hence
    $$\FH(\overline{x'})\in\FH[\overline{h(x')}]\subseteq \FH[\overline{h(x')}]^{\uparrow\downarrow}= R^{(0)}_{f}[\overline{h(x')}^{\epsilon_{f}}]^{\downarrow}= h(x).$$ If $x$ is $f(\overline{\psi})$, then by what was shown above $\FH(\overline{\psi})\in h(\FH(\overline{\psi}))=h(f(\overline{\psi}))$, which implies, by Lemma \ref{lemma:basiccalc}.2, that $f(\overline{\psi})\in \gamma_N(h(f(\overline{\psi}))) = h(f(\overline{\psi}))$. If $y$ is $f(\overline{\psi})$, by induction hypothesis $\overline{\psi}\in\overline{h(\psi)}^{\epsilon_f^\partial}$. Then, by Lemma \ref{lemma:basiccalc}.3, $y\in\FH[\overline{h(\psi)^{\epsilon_f}}]^{\uparrow}=h(y)^{\uparrow}$. The cases in which $y$ is $\GC(\overline{y})$, $g(\overline{\psi})$ and $x$ is $g(\overline{y})$ are dual. 
\end{proof}

\begin{prop}\label{prop:main cut elimination}
For every sequent $x \Rightarrow y$, if $\mathbb{F}_\mathrm{D}^+ \models x \Rightarrow y$ then $ x N y$ in $\mathbb{F}_\mathrm{D}$.
\end{prop}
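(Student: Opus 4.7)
The plan is to reduce the statement to an immediate consequence of Corollary \ref{coro:lifting map} by evaluating validity in $\mathbb{F}_\mathrm{D}^+$ at the canonical valuation used to define the homomorphism $h$.

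First I would observe that the assignment $v : \mathsf{AtProp} \to \mathbb{F}_\mathrm{D}^+$ defined by $v(p) := \{p\}^{\downarrow}$ takes values in the lattice of stable sets, since sets of the form $Y^{\downarrow}$ are automatically Galois-stable by standard properties of the polarity $(W, U, N)$. Hence $v$ is a legitimate valuation in $\mathbb{F}_\mathrm{D}^+$, and the unique homomorphic extension of $v$ to all structures (interpreting each $\F_f$ via $f_{R_f}$ and each $\G_g$ via $g_{R_g}$, and their residuals via the corresponding residual operations available in the complete $\mathcal{L}$-algebra $\mathbb{F}_\mathrm{D}^+$) coincides with the map $h$ of Corollary \ref{coro:lifting map}.

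Next, from the hypothesis $\mathbb{F}_\mathrm{D}^+ \models x \Rightarrow y$, instantiating at the valuation $v$ yields $h(x) \leq h(y)$ in $\mathbb{F}_\mathrm{D}^+$; since the order on the lattice of stable sets is set-theoretic inclusion, this means $h(x) \subseteq h(y)$. Now Corollary \ref{coro:lifting map} applies: because $x \in \mathsf{Str}_\mathcal{F}$ we have $x \in h(x)$, and because $y \in \mathsf{Str}_\mathcal{G}$ we have $y \in h(y)^{\uparrow}$. Combining these, $x \in h(y)$, and by unfolding the definition of $(\cdot)^{\uparrow}$, $y \in h(y)^{\uparrow}$ means precisely that $zNy$ for every $z \in h(y)$. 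Taking $z := x$ gives $xNy$, as required.

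The only genuine content of the argument is Corollary \ref{coro:lifting map} itself, which in turn rests on Lemmas \ref{lemma:basiccalc} and \ref{lem:join and meet} and on clause 4 of Definition \ref{def: dfram} (the closure of $N$ under all derivation rules of $\mathrm{D}$). So I expect no real obstacle at this stage: once $h$ is identified with the homomorphic extension of $v$, the proof is a two-line manipulation of the Galois connection. The only point requiring a small care is verifying that $h$ really is a homomorphism of $\mathcal{L}^*$-algebras so that it can interpret arbitrary structures (including those built with the residual-related structural connectives $\F_{f^{\sharp}_i}, \G_{g^{\flat}_i}$), which is ensured by Proposition \ref{prop:F plus is L star algebra} together with the residuation rules of $\mathrm{D}$ encoded via Definition \ref{def: dfram}.4.
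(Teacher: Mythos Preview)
Your proposal is correct and follows essentially the same approach as the paper: both use the canonical valuation $p\mapsto\{p\}^\downarrow$, invoke Corollary \ref{coro:lifting map} to obtain $x\in h(x)$ and $y\in h(y)^\uparrow$ (equivalently $h(y)\subseteq y^\downarrow$), and combine these with $h(x)\subseteq h(y)$. The only cosmetic difference is that the paper argues by contrapositive while you argue directly.
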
 
\begin{proof}
Let $h: \mathsf{Str}_\mathcal{F} \cup \mathsf{Str}_\mathcal{G} \rightarrow \mathbb{F}^+_{\mathrm{D}}$ be the unique homomorphic extension of the assignment $p \mapsto \{p\}^\downarrow$. If $\mathbb{F}^+_{\mathrm{D}} \models x \Rightarrow y$ then $h(x) \subseteq h(y)$. By Lemma \ref{coro:lifting map}, $x \in h(x)$ and $y \in h(y)^\uparrow$. From $x \in h(x)$ and $h(x) \subseteq h(y)$ it follows $x\in h(y)$, which implies, since $y\in h(y)^\uparrow$,  that $xNy$. 
\end{proof}

\subsection{Soundness of analytic structural rules in the complex algebras of functional $\mathrm{D}$-frames}

 In  the present subsection, we let $\mathrm{D} \in \{\mathrm{D.LE}, \mathrm{D.LE'}, \mathrm{cfD.LE}, \mathrm{cfD.LE'}\}$ and show that if $\mathrm{D}$ is obtained by extending the basic calculus $\mathrm{D.LE}$ with analytic structural rules, then these additional rules are sound in the complex algebras of any functional $\mathrm{D}$-frame (cf.~Proposition \ref{prop:soundness}). From this, it immediately follows that the analytic inductive inequalities from which these rules arise are valid in these algebras. 

\begin{prop}\label{prop:soundness}
Let $\mathrm{D} \in \{\mathrm{D.LE}, \mathrm{D.LE'}, \mathrm{cfD.LE}, \mathrm{cfD.LE'}\}$. The rules of $\mathrm{D}$ are sound in $\mathbb{F}^+_{\mathrm{D}}$.
\end{prop}
\begin{proof}
As discussed in Section \ref{ssec:functional D-frames}, $\mathbb{F}_{\mathrm{D}}$ is an $\mathcal{L}$-frame, and hence, by Proposition \ref{prop:F plus is L star algebra}, $\mathbb{F}^+_{\mathrm{D}}$ is a complete $\mathcal{L}$-algebra, and hence an $\mathcal{L}^*$-algebra. Therefore, all the rules which $\mathrm{D}$ shares with $\mathrm{cfD.LE}$ are sound. If $\mathrm{R}$ is an analytic structural rule of $\mathrm{D}$, then $\mathrm{R}$ has the following shape:
\[
\AXC{$S_1 \fCenter T_1$}
\AXC{$\cdots$}
\AXC{$S_n \fCenter T_n$}
\RL{\small $\mathrm{R}$}
\TIC{$S_0 \fCenter T_0$}
\DP
\]
with the assumptions on the meta-variables reported on at the end of  Section \ref{ssec:syntactic frames associated w algebras}. By Definition \ref{def: dfram}.4, 
\[
\AXC{$s_1 N t_1$}
\AXC{$\cdots$}
\AXC{$s_n N t_n$}
\RightLabel{\small $\mathrm{R_N}$}
\TIC{$s_0 N t_0$}
\DP
\]
holds in $\mathbb{F}_{\mathrm{D}}$ where $s_i$ (resp.~$t_i$) is an instantiation of $S_i$ (resp.~$T_i$) with $1 \leq i \leq n$. 

Let $h:\mathsf{MVar}\to\mathbb{F}^+_{\mathrm{D}}$ be an assignment of meta-variables, which we identify with its unique homomorphic extension, and assume that $h(S_1) \subseteq h(T_1), \ldots, h(S_n) \subseteq h(T_n)$. We need to prove that $h(S_0) \subseteq h(T_0)$.  
By Lemma \ref{prop:key soundness},  this is equivalent to showing that $s_0 N t_0$ for every $s_0 \in h\{S_0\} $ and $t_0 \in h\{T_0\}$. Notice that since the rule $\mathrm{R}$ is analytic, each meta-variable in $s_0 N t_0$ occurs at most once, and therefore $$h\{S_0(\Gamma_1, \ldots, \Gamma_k)\} = \{s_0(x_1, \ldots, x_k)\mid x_j\in h\{\Gamma_j\} \mbox{ for } 1\leq j\leq k\}$$ and $$h\{T_0 (\Delta_1, \ldots, \Delta_m)\} = \{t_0(y_1, \ldots, y_m)\mid y_j\in h\{\Delta_j\} \mbox{ for } 1\leq j\leq m\}.$$ Hence,  each sequent  $s_0 N t_0$ is an instance of  the conclusion of $\mathrm{R_N}$ and induces a choice function $\eta: \mathsf{MVar}\to \bigcup h[\mathsf{MVar}]$ such that $\eta(\Gamma)\in h\{\Gamma\}$. We then let   $\{s_i N t_i \mid 1 \leq i \leq n\}$ be the corresponding instance of  the premises of $\mathrm{R_N}$, in the sense that e.g.~$s_i = s_i(\eta(\Gamma_1), \ldots \eta(\Gamma_k))$ for each $i$. 
Since $\mathrm{R_N}$ holds in $\mathbb{F}_D$, to prove our claim it is enough to show that  $s_iN t_i$ holds in $\mathbb{F}_{\mathrm{D}}$ for each $i$. This is guaranteed by the assumption $h(S_i) \subseteq h(T_i)$ and by Lemma \ref{prop:key soundness}, since, by Definition \ref{def:image},  $s_i\in h\{S_i\}$ and $t_i\in h\{T_i\}$.
\end{proof}

Let $\mathbf{L}'$ be the analytic extension of   the basic $\mathcal{L}$-logic $\mathbf{L}$ with the analytic inductive axioms  corresponding to the additional rules of $\mathrm{D}$.   The proposition above immediately implies the second part of the following statement, while the first part has been discussed right after Definition \ref{def: dfram}. 
 
 \begin{thm}\label{cor:complex algebra is LE'-algebra}
 If $\mathbb{F}_{\mathrm{D}}$ is a functional $\mathrm{D}$-frame for  $\mathrm{D} \in \{\mathrm{D.LE}, \mathrm{cfD.LE}\}$, then $\mathbb{F}_{\mathrm{D}}^+$ is a complete $\mathrm{L}$-algebra. 
 If $\mathbb{F}_{\mathrm{D}}$ is a functional $\mathrm{D}$-frame for $\mathrm{D} \in \{\mathrm{D.LE'}, \mathrm{cfD.LE'}\}$,  then $\mathbb{F}_{\mathrm{D}}^+$ is a complete $\mathbf{L}'$-algebra.
\end{thm}

The Proposition and Theorem above show that if a cut-free display calculus $D$ contains a structural rule then the complex algebra of its $D$-frame will satisfy the corresponding quasi-inequality, hence be in the class of algebras w.r.t.\ which the corresponding calculus with cut is sound and complete. Key to this result are the properties of functional frames discussed in Lemmas  \ref{lemma:h curly is good} and \ref{prop:key soundness}. This allows for a uniform proof of cut-elimination, which is presented in the following section, for all axiomatic extensions of $\mathrm{LE}$-logics whose axioms correspond to analytic rules.

\section{Semantic cut elimination}\label{sec:semcut}
 In the present section, we  fix an arbitrary LE-signature  $\mathcal{L} = \mathcal{L}(\mathcal{F}, \mathcal{G})$  and show the semantic cut elimination  for the proper display calculi associated with the basic normal $\mathcal{L}$-logic   $\mathbf{L}$ and all its analytic extensions $\mathbf{L'}$.\footnote{By Proposition \ref{prop:type5}, analytic extensions of $\mathbf{L}$ are captured by proper display calculi $\mathrm{D.LE'}$  obtained by adding analytic structural rules  to the basic calculus  $\mathrm{D.LE}$. By the general theory developed in \cite{greco2018unified}, $\mathrm{D.LE'}$ is sound and complete with respect to the class of complete $\mathcal{L}$-algebras validating the additional axioms.}  Let $\mathrm{D} \in \{\mathrm{D.LE}, \mathrm{D.LE'}\}$,  and let $\mathrm{cfD}$ be the cut-free version of $\mathrm{D}$.
 
 \begin{thm}\label{thm:semantic cut elim ax ext}
For every sequent $x \Rightarrow y$, if $ \vdash_{\mathrm{D}} x \Rightarrow y$ then $\vdash_{\mathrm{cfD} } x \Rightarrow y$.
\end{thm}
\begin{proof}
Let $\mathbb{F}_{\mathrm{cfD}}$ be the functional $\mathrm{cfD}$-frame (cf.~Definition \ref{def: dfram}) in which $N$ is defined as follows: for all $x \in W$ and $y \in W^\partial$,  \begin{equation}\label{def:eN}
x N y \quad\mbox{ iff } \quad\vdash_{\mathrm{cfD}} x \Rightarrow y.
\end{equation}

 The proof strategy is  illustrated by the following diagram. 
 
\[
\begin{tikzpicture}[node/.style={circle, draw, fill=black}, scale=1]
\node (a_1) at (-1.5, 0) {$\mathbb{F}^+_{\mathrm{cfD}}\models x \Rightarrow y$};
\node (b_1) at (2.5, 0) {$x N y$ in $\mathbb{F}_\mathrm{cfD}$};
\node (a_2) at (-1.5, 1.5) {$\vdash_\mathrm{D} x \Rightarrow y$};
\node (b_2) at (2.5, 1.5) {$\vdash_\mathrm{cfD} x \Rightarrow y$};
\draw [dashed, ->] (a_2)  to (b_2);
\draw [->] (a_1)  to (b_1);
\draw [->] (a_2)  to (a_1);
\draw [<->] (b_1)  to (b_2);
\end{tikzpicture}
\]
By Theorem \ref{cor:complex algebra is LE'-algebra}, 
$\mathbb{F}_\mathrm{cfD}^+$ is a complete $\mathbf{L}$-algebra (resp.~$\mathbf{L'}$-algebra). If $\mathrm{D} = \mathrm{D.LE}$, then by Theorem \ref{prop:soundness of D.LE}, $\mathrm{D}$ is sound w.r.t.~the class of complete $\mathbf{L}$-algebras;  if $\mathrm{D} = \mathrm{D.LE'}$ for some analytic extension $\mathbf{L'}$ of  $\mathbf{L}$, then  by the general theory developed in \cite{greco2018unified},   $\mathrm{D}$ is sound  w.r.t.~the class of complete $\mathbf{L'}$-algebras. 
In either case,  $\vdash_\mathrm{D} x \Rightarrow y$  implies that $\mathbb{F}_\mathrm{cfD}^+ \models x \Rightarrow y$, which is the vertical arrow on the left-hand side of the diagram. The horizontal implication follows from Proposition \ref{prop:main cut elimination}.
The vertical equivalence on the right-hand side of the diagram holds by  \eqref{def:eN}.
\end{proof}

 \section{Finite model property}\label{sec:fmp}
 In this section we discuss a general strategy for obtaining the finite model property and hence decidability of extensions of $\mathrm{LE}$-logics with rules of particular shape. The finite model property is obtained via an appropriate modification of functional $\mathrm{D}$-frames.
 
We say that a display calculus has the {\em finite model property} (FMP) if every sequent $x \Rightarrow y$ that is not derivable in  the calculus has a finite counter-model. In this section, we prove the FMP for  $\mathrm{D}\in \{\mathrm{D.LE}, \mathrm{D.LE'}\}$ where $\mathrm{D.LE}$ is the display calculus  for the basic $\mathrm{LE}$-logic, and  $\mathrm{D.LE'}$ is one of its extensions with analytic structural rules subject to certain conditions (see below). For any sequent $x \Rightarrow y$ such that $\nvdash_{\mathrm{D}} x\Rightarrow y$, our proof strategy consists in constructing a functional $\mathrm{D}$-frame $\mathbb{F}^{x\Rightarrow y}_\mathrm{D}$  the complex algebra of which is finite. The basic idea to  satisfy the requirement of finiteness is provided by the following lemma (the symbol $(\cdot)^c$ denotes the relative complementation). 

\begin{lem}\label{lem: main idea FMP}
	Let $\mathbb{W}=(W,W^\partial,N)$ be a polarity. If  the set  $\{ y^\downarrow\mid y\in W^\partial\}$ is finite, then $\mathbb{W}^+$ is finite. Dually, if the set  $\{ x^\uparrow\mid x\in W\}$ is finite, then $\mathbb{W}^+$ is finite.
\end{lem}
 \begin{proof}
 Since $\{ y^\downarrow\mid y\in W^\partial\}$ meet-generates  $\mathbb{W}^+$, an upper bound to the size of $\mathbb{W}^+$ is  $2^{|\{ y^\downarrow\mid y\in W^\partial\}|}$. The remaining part of the statement is proven dually.
 \end{proof}

 \begin{definition}\label{def:proof search}
 Let $\mathrm{D}$ be a display calculus as above. For any sequent $x \Rightarrow y$, let $(x \Rightarrow y)^\leftarrow$ be the set of sequents which is defined recursively as follows:
 \begin{enumerate}
 \item $x \Rightarrow y \in (x \Rightarrow y)^\leftarrow$;
 \item if {\small\AXC{$x_1 \Rightarrow y_1, \ldots, x_n \Rightarrow y_n$}
 \UIC{$x_0 \Rightarrow y_0$}
 \DP}
is an instance of a rule in $\mathrm{D}$, and $x_0 \Rightarrow y_0 \in (x \Rightarrow y)^\leftarrow$, then $x_1 \Rightarrow y_1, \ldots, x_n \Rightarrow y_n \in (x \Rightarrow y)^\leftarrow$.
\end{enumerate}
\end{definition}

\begin{definition}\label{def:countermodel} For any sequent $x\Rightarrow y$, 
	let $\mathbb{F}^{x\Rightarrow y}_{\mathrm{D}}$ denote  the structure $(W, W^\partial, N, \mathcal{R}_{\mathcal{F}}, \mathcal{R}_{\mathcal{G}})$ such that $W, W^\partial,  \mathcal{R}_{\mathcal{F}}, \mathcal{R}_{\mathcal{G}}$ are as in Definition \ref{def: dfram}.1-3, and $N$ is defined as follows:
	\begin{equation}\label{eq:FMP}
w N u \quad\mbox{ iff }\quad  \vdash_{\mathrm{D}} w \Rightarrow u  \mbox{ or } w \Rightarrow u \not\in (x \Rightarrow y)^\leftarrow.
	\end{equation}
\end{definition}

 \begin{prop}
$\mathbb{F}^{x\Rightarrow y}_{\mathrm{D}}$ is a functional $\mathrm{D}$-frame.
 \end{prop}
\begin{proof}
We only need to show that $\mathbb{F}^{x\Rightarrow y}_{\mathrm{D}}$ satisfies Definition \ref{def: dfram}.4, i.e.~for every rule $\mathrm{R}$:
\[\AXC{$x_1 \Rightarrow y_1, \ldots, x_n \Rightarrow y_n$}
 \UIC{$x_0 \Rightarrow y_0$} 
 \DP\]
 in $\mathrm{D}$, 
 \[\AXC{$x_1 N y_1, \ldots, x_n N y_n$}
 \UIC{$x_0 N y_0$} 
 \DP\]
 holds. Assume that $x_1 N y_1, \ldots, x_n N y_n$. If  $\vdash_{\mathrm{D}} x_1 \Rightarrow y_1, \ldots, \vdash_{\mathrm{D}} x_n \Rightarrow y_n$, then $\vdash_{\mathrm{D}} x_0 \Rightarrow y_0$ by applying $\mathrm{R}$, hence $x_0 N y_0$ by the definition of $N$. Otherwise,  $\not\vdash_{\mathrm{D}}x_i \Rightarrow y_i$ for some  $1 \leq i \leq n$, and hence \eqref{eq:FMP}
 and the assumption $x_iNy_i$ imply that $x_i \Rightarrow y_i \not\in (x \Rightarrow y)^\leftarrow$. Hence, $x_0 \Rightarrow y_0 \not\in (x \Rightarrow y)^\leftarrow$ by Definition \ref{def:proof search}.2. Therefore, we conclude again that $x_0 N y_0$.
\end{proof}
The above proposition and Theorem \ref{cor:complex algebra is LE'-algebra} imply that the complex algebra of $\mathbb{F}^{x\Rightarrow y}_{\mathrm{D}}$ is a complete $\mathcal{L}$-algebra if $\mathrm{D}$ is $\mathrm{D.LE}$ (resp.~a complete $\mathbb{L}_{\mathrm{LE'}}$-algebra if $\mathrm{D}$ is $\mathrm{D.LE'}$).

 \begin{prop}\label{prop:counter model}
	If $\not\vdash_{\mathrm{D}}  x \Rightarrow y$, then $(\mathbb{F}^{x\Rightarrow y}_{\mathrm{D}})^+ \not\vDash x \Rightarrow y$.
\end{prop}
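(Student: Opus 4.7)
The plan is to reduce the claim directly to Proposition \ref{prop:main cut elimination} applied to the functional $\mathrm{D}$-frame $\mathbb{F}^{x\Rightarrow y}_{\mathrm{D}}$, using the tailored definition of $N$ in \eqref{eq:FMP} to ensure that the specific pair $(x,y)$ witnesses a failure of $N$-relatedness. The preceding proposition already guarantees that $\mathbb{F}^{x\Rightarrow y}_{\mathrm{D}}$ is a functional $\mathrm{D}$-frame, so Proposition \ref{prop:main cut elimination} is available, and the whole proof will reduce to checking that the ``diagonal'' pair $(x,y)$ is not in $N$.

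First I would observe that $x\Rightarrow y$ itself belongs to $(x\Rightarrow y)^{\leftarrow}$ by Definition \ref{def:proof search}.1. Combined with the hypothesis $\not\vdash_{\mathrm{D}} x\Rightarrow y$, this shows that both disjuncts of \eqref{eq:FMP} fail for $(x,y)$; hence $x$ and $y$ are not $N$-related in $\mathbb{F}^{x\Rightarrow y}_{\mathrm{D}}$. This is the only genuinely new ingredient compared to the setting of Proposition \ref{prop:main cut elimination}, and it is precisely what the carefully engineered definition of $N$ in \eqref{eq:FMP} is designed to deliver.

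The rest is then an application of (the contrapositive of) Proposition \ref{prop:main cut elimination} to $\mathbb{F}^{x\Rightarrow y}_{\mathrm{D}}$: from $x\not N\,y$ it follows that $(\mathbb{F}^{x\Rightarrow y}_{\mathrm{D}})^+\not\models x\Rightarrow y$. If desired, this can be unpacked concretely as in the proof of that proposition: consider the unique homomorphic extension $h\colon\mathsf{Str}_\mathcal{F}\cup\mathsf{Str}_\mathcal{G}\to(\mathbb{F}^{x\Rightarrow y}_{\mathrm{D}})^+$ of the assignment $p\mapsto\{p\}^\downarrow$; then Corollary \ref{coro:lifting map} gives $x\in h(x)$ and $y\in h(y)^\uparrow$, so $h(y)\subseteq y^\downarrow$ by stability, and $x\not N\,y$ gives $x\notin y^\downarrow\supseteq h(y)$, whence $h(x)\not\subseteq h(y)$.

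The main obstacle would have been to verify that $\mathbb{F}^{x\Rightarrow y}_{\mathrm{D}}$ really validates the rules of $\mathrm{D}$ despite the restricted definition of $N$, but this has been taken care of by the previous proposition; the present statement is therefore essentially a corollary. I expect no calculations beyond the observation about $(x,y)$ being excluded from $N$.
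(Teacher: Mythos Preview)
Your proposal is correct and follows essentially the same approach as the paper: show that $x\Rightarrow y\in(x\Rightarrow y)^\leftarrow$ together with $\not\vdash_{\mathrm{D}}x\Rightarrow y$ forces $x\not N y$, and then conclude via Corollary~\ref{coro:lifting map} that the canonical valuation $h$ refutes the sequent. The only cosmetic difference is that you invoke Proposition~\ref{prop:main cut elimination} as a black box, whereas the paper repeats its short contrapositive argument inline; your packaging is arguably cleaner.
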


\begin{proof}
	Let $h: \mathsf{Str}_\mathcal{F} \cup \mathsf{Str}_\mathcal{G} \rightarrow (\mathbb{F}^{x\Rightarrow y}_{\mathrm{D}})^+$ be the unique homomorphic extension of the assignment $p \mapsto \{p\}^\downarrow$. We will show that $h(x) \not\subseteq h(y)$. Assume that $h(x) \subseteq h(y)$. By Lemma \ref{coro:lifting map}, we obtain that $x \in h(x)$ and $h(y) \subseteq y^\downarrow$. Hence $x \in y^\downarrow$, i.e.~$x N y$, that is $\vdash_{\mathrm{D}} x \Rightarrow y$ or $x \Rightarrow y \notin (x \Rightarrow y)^\leftarrow$ by \eqref{eq:FMP}.  Since $x \Rightarrow y \in (x \Rightarrow y)^\leftarrow$ by Definition \ref{def:proof search}.1, we obtain that $\vdash_{\mathrm{D}} x \Rightarrow y$, which  contradicts $ \not\vdash_{\mathrm{D}} x \Rightarrow y$, and hence $h(x) \not\subseteq h(y)$, i.e. $(\mathbb{F}^{x\Rightarrow y}_{\mathrm{D}})^+ \not\vDash x \Rightarrow y$.
\end{proof}

Thus, the algebra $(\mathbb{F}^{x\Rightarrow y}_{\mathrm{D}})^+$ is a good candidate for the finite model property, provided we can define conditions under which it is finite.

\begin{definition}\label{def:provequiv}
Let $\varphi_{\mathcal{F}}$ denote the following equivalence relation on $\mathsf{Str}_{\mathcal{F}}$:  if $x$ and $x'$ are $\mathcal{F}$-structures,
$(x, x')\in \varphi_{\mathcal{F}}$ iff the following rule scheme is derivable in $\mathrm{D}$:
\begin{center}
\AX$x \fCenter y$
\doubleLine
\UI$x' \fCenter y$
\DP
\end{center}
An equivalence relation $\varphi_{\mathcal{G}}$  on $\mathsf{Str}_{\mathcal{G}}$ can be defined analogously. In what follows, we will let $[x']_{\varphi_{\mathcal{F}}}$  and $[y']_{\varphi_{\mathcal{G}}}$  denote the equivalence classes induced by $\varphi_{\mathcal{F}}$  and $\varphi_{\mathcal{G}}$ respectively.
\end{definition}

\begin{definition}
For every sequent $x\Rightarrow y$, let 
\[(x\Rightarrow y)^{\leftarrow}_{\mathcal{F}} \ceq \{x'\in \mathsf{Str}_{\mathcal{F}}\mid x'\Rightarrow y'\in (x\Rightarrow y)^{\leftarrow}\mbox{ for some }y'\in \mathsf{Str}_{\mathcal{G}}  \}\]
\[(x\Rightarrow y)^{\leftarrow}_{\mathcal{G}} \ceq \{y'\in \mathsf{Str}_{\mathcal{G}}\mid x'\Rightarrow y'\in (x\Rightarrow y)^{\leftarrow}\mbox{ for some }x'\in \mathsf{Str}_{\mathcal{F}}  \}.\]
\end{definition}

In what follows, we let $y^\downarrow:= \{x \in W \mid x N y\}$, where $N$ is defined as in \eqref{eq:FMP}. 
\begin{prop}\label{prop:corecountermodel}
For all $y' \in \mathsf{Str}_\mathcal{G}$ and $x' \in \mathsf{Str}_\mathcal{F}$ such that $(y'^\downarrow)^c\neq \varnothing$ and $(x'^\uparrow)^c\neq \varnothing$,
$$(y'^\downarrow)^c = \bigcup \{[x'']_{\varphi_\mathcal{F}} \mid x'' \in A\} \quad \mbox{ and } \quad (x'^\uparrow)^c = \bigcup \{[y'']_{\varphi_\mathcal{G}} \mid y'' \in B\} $$
for some $A \subseteq (x\Rightarrow y)^{\leftarrow}_{\mathcal{F}}$ and $B \subseteq (x\Rightarrow y)^{\leftarrow}_{\mathcal{G}}$.
\end{prop}
\begin{proof}
	Let $y'\in\mathsf{Str}_\mathcal{G}$. If $y'\notin (x\Rightarrow y)^{\leftarrow}_{\mathcal{G}}$ then $w\Rightarrow y'\notin(x\Rightarrow y)^{\leftarrow}$ for all $w\in \mathsf{Str}_\mathcal{F}$ and therefore,  by Definition \ref{def:countermodel},  $y'^{\downarrow}=\mathsf{Str}_\mathcal{F}$, i.e.\ $(y'^\downarrow)^c = \varnothing$. Therefore, we can assume without loss of generality that $y'\in  (x\Rightarrow y)^{\leftarrow}_{\mathcal{G}}$. Let $(x',x'')\in\varphi_{\mathcal{F}}$. Definition \ref{def:provequiv} implies that for every $u\in\mathsf{Str}_\mathcal{G}$ \begin{equation}
	\label{eq:firstpointofequiv}\vdash_{\mathrm{D}} x'\Rightarrow u\text{ if and only if }\vdash_{\mathrm{D}} x''\Rightarrow u
	\end{equation}  and  \begin{equation}
	\label{eq:secondpointofequiv} x'\Rightarrow u\in (x\Rightarrow y)^{\leftarrow}\text{ if and only if }x''\Rightarrow u\in (x\Rightarrow y)^{\leftarrow}.
	\end{equation} By Definition \ref{def:countermodel},  \eqref{eq:firstpointofequiv} and \eqref{eq:secondpointofequiv} we obtain \begin{equation}
	\label{eq:importanceofequiv}x'N y'\text{ if and only if }x''Ny'
	\end{equation} for every $(x',x'')\in\varphi_{\mathcal{F}}$. Furthermore, by Definition \ref{def:countermodel}, $w \cancel{N}y'$ implies that $w\Rightarrow y'\in (x\Rightarrow y)^{\leftarrow}$ and therefore $w\in(x\Rightarrow y)^{\leftarrow}_{\mathcal{F}}$. This combined with \eqref{eq:importanceofequiv} implies that there exists some $A\subseteq(x\Rightarrow y)^{\leftarrow}_{\mathcal{F}}$ such that $(y'^\downarrow)^c = \bigcup \{[x'']_{\varphi_\mathcal{F}} \mid x'' \in A\}$. The proof for $(x'^{\uparrow})^{c}$ is shown dually.     
\end{proof}

\begin{cor}\label{cor:fmpgeneral}
For every sequent $x\Rightarrow y$, 
\begin{enumerate}
\item if $\{[x']_{\varphi_{\mathcal{F}}}\mid x'\in (x\Rightarrow y)^{\leftarrow}_{\mathcal{F}}\}$ is finite, then $(\mathbb{F}^{x\Rightarrow y}_{\mathrm{D}})^+$ is finite.
\item if $\{[y']_{\varphi_{\mathcal{G}}}\mid y'\in (x\Rightarrow y)^{\leftarrow}_{\mathcal{G}}\}$ is finite, then $(\mathbb{F}^{x\Rightarrow y}_{\mathrm{D}})^+$ is finite.
\end{enumerate}
\end{cor}
\begin{proof}
By Proposition \ref{prop:corecountermodel}, for every $y'\in \mathsf{Str}_\mathcal{G}$, $(y'^{\downarrow})^{c}=\varnothing$, or  $(y'^\downarrow)^c = \bigcup \{[x'']_{\varphi_\mathcal{F}} \mid x'' \in A\}$ for some $A\subseteq (x\Rightarrow y)^{\leftarrow}_{\mathcal{F}}$. If $\{[x']_{\varphi_{\mathcal{F}}}\mid x'\in (x\Rightarrow y)^{\leftarrow}_{\mathcal{F}}\}$ is finite, then $\{(y'^{\downarrow})^{c}\mid y'\in \mathsf{Str}_\mathcal{G}\}$ is finite, so $\{y'^{\downarrow}\mid y'\in \mathsf{Str}_\mathcal{G}\}$ is finite, therefore Lemma \ref{lem: main idea FMP} implies that $(\mathbb{F}^{x\Rightarrow y}_{\mathrm{D}})^+$ is finite. Item 2 is shown analogously. 
\end{proof}

Proposition \ref{prop:counter model} and Corollary \ref{cor:fmpgeneral} imply the following:

\begin{thm}\label{thm:FMP}
	If the calculus $\mathrm{D}$ verifies one of the assumptions of Corollary \ref{cor:fmpgeneral} then FMP holds for $\mathrm{D}$. 
\end{thm}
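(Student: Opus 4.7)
The plan is to assemble the FMP directly from the two main ingredients already established: the existence of a refuting complex algebra for any non-derivable sequent (Proposition \ref{prop:counter model}) and the finiteness criterion for that complex algebra (Corollary \ref{cor:fmpgeneral}). The proof is therefore essentially a bookkeeping argument and should be short.

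First I would fix an arbitrary sequent $x \Rightarrow y$ such that $\not\vdash_{\mathrm{D}} x \Rightarrow y$, and construct the functional $\mathrm{D}$-frame $\mathbb{F}^{x\Rightarrow y}_{\mathrm{D}}$ as in Definition \ref{def:countermodel}. Its complex algebra $(\mathbb{F}^{x\Rightarrow y}_{\mathrm{D}})^+$ is, by Theorem \ref{cor:complex algebra is LE'-algebra}, a complete $\mathcal{L}$-algebra when $\mathrm{D} = \mathrm{D.LE}$, and a complete $\mathbb{L}_{\mathrm{LE}'}$-algebra when $\mathrm{D} = \mathrm{D.LE'}$; either way, it lies in the class of algebras with respect to which $\mathrm{D}$ is sound and complete, so it is an admissible candidate counter-model.

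Next, Proposition \ref{prop:counter model} yields $(\mathbb{F}^{x\Rightarrow y}_{\mathrm{D}})^+ \not\models x \Rightarrow y$, so this algebra genuinely refutes the sequent. It then remains only to secure finiteness. This is where the hypothesis of the theorem enters: by assumption, $\mathrm{D}$ verifies one of the two clauses of Corollary \ref{cor:fmpgeneral}, i.e., either $\{[x']_{\Phi_{\mathcal{F}}}\mid x'\in (x\Rightarrow y)^{\leftarrow}_{\mathcal{F}}\}$ or $\{[y']_{\Phi_{\mathcal{G}}}\mid y'\in (x\Rightarrow y)^{\leftarrow}_{\mathcal{G}}\}$ is finite. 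Applying the relevant clause of Corollary \ref{cor:fmpgeneral} gives that $(\mathbb{F}^{x\Rightarrow y}_{\mathrm{D}})^+$ is finite, which finishes the proof.

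Since all the technical content has already been packaged into the earlier lemmas and corollaries, there is no single step in this proof that presents a serious obstacle. The only subtlety worth flagging is making explicit, when writing up, that the hypothesis of Corollary \ref{cor:fmpgeneral} must be verified \emph{uniformly} in the cut-parameter $x\Rightarrow y$, since different non-derivable sequents give rise to different frames $\mathbb{F}^{x\Rightarrow y}_{\mathrm{D}}$, and the finiteness must hold for each of them. In the typical applications this uniformity is immediate from structural features of $\mathrm{D}$ (e.g.\ a bound on the size of structures appearing in $(x\Rightarrow y)^{\leftarrow}$ together with an effective description of $\Phi_{\mathcal{F}}$ or $\Phi_{\mathcal{G}}$), and it is this uniformity that one would want to highlight in concrete instances, but at the level of generality of Theorem \ref{thm:FMP} nothing further is needed.
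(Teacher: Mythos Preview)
Your proposal is correct and follows exactly the approach the paper takes: the paper's proof is the single line ``Proposition \ref{prop:counter model} and Corollary \ref{cor:fmpgeneral} imply the following,'' and your argument unpacks precisely this combination. Your additional remark about uniformity in $x\Rightarrow y$ is a reasonable observation for applications but, as you note, is not needed at the level of generality of the theorem itself.
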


In what follows we will discuss sufficient conditions for the assumptions of Corollary \ref{cor:fmpgeneral} to hold.

\begin{prop}\label{prop:compdecrfmp}
If all rules in $\mathrm{D}$ applied bottom up  decrease or leave unchanged the complexity of sequents, then FMP holds for $\mathrm{D}$. 
\end{prop}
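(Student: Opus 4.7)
The plan is to reduce the claim to Corollary \ref{cor:fmpgeneral} via Theorem \ref{thm:FMP}. Specifically, I would show that under the hypothesis, for every sequent $x\Rightarrow y$ the set $(x\Rightarrow y)^{\leftarrow}_{\mathcal{F}}$ is finite; the corresponding quotient $\{[x']_{\Phi_{\mathcal{F}}}\mid x'\in (x\Rightarrow y)^{\leftarrow}_{\mathcal{F}}\}$ is then a fortiori finite, so Corollary \ref{cor:fmpgeneral}.1 yields finiteness of $(\mathbb{F}^{x\Rightarrow y}_{\mathrm{D}})^+$, and Proposition \ref{prop:counter model} supplies the required finite countermodel whenever $\not\vdash_{\mathrm{D}} x\Rightarrow y$.

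First I would fix a complexity measure on sequents (for instance, the total number of occurrences of operational and structural connectives) with respect to which the hypothesis is formulated. By induction on the recursive definition of $(x\Rightarrow y)^{\leftarrow}$ I would establish that every sequent in $(x\Rightarrow y)^{\leftarrow}$ has complexity at most that of $x\Rightarrow y$: the base case $x\Rightarrow y\in (x\Rightarrow y)^{\leftarrow}$ (Definition \ref{def:proof search}.1) is immediate, and for the inductive step, if $x_0\Rightarrow y_0\in (x\Rightarrow y)^{\leftarrow}$ is the conclusion of a rule instance in $\mathrm{D}$ and $x_i\Rightarrow y_i$ is one of its premises, then by hypothesis the complexity of $x_i\Rightarrow y_i$ is bounded by that of $x_0\Rightarrow y_0$, which by the inductive hypothesis is bounded by that of $x\Rightarrow y$.

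Next, I would invoke the subformula property of $\mathrm{D}$ (which holds by condition $\mathrm{C}_1$ together with cut elimination, cf.\ Theorem \ref{thm:meta}, since $\mathrm{D}$ is obtained from a proper display calculus by adding analytic structural rules) to conclude that every formula appearing in a sequent of $(x\Rightarrow y)^{\leftarrow}$ is a subformula of some formula occurring in $x\Rightarrow y$. Since the set of such subformulas is finite, and only finitely many structural operations are available, the collection of sequents of bounded complexity built from this finite alphabet is itself finite. Hence $(x\Rightarrow y)^{\leftarrow}_{\mathcal{F}}$ is finite, verifying the hypothesis of Corollary \ref{cor:fmpgeneral}.1.

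The delicate point — and essentially the only non-routine step — is the interaction between the informal ``complexity'' of sequents (as used in the hypothesis) and the formal machinery of $(x\Rightarrow y)^{\leftarrow}$ and of the subformula property. The chosen measure must genuinely be non-increasing for every rule of $\mathrm{D}$; in particular, this rules out (Cut), whose bottom-up reading introduces an arbitrary cut-formula, and hence the proposition is really about cut-free extensions (consistent with the semantic cut elimination results earlier in the paper). Once the measure is pinned down, the argument is purely a finite combinatorial bookkeeping, and no quotient by $\Phi_{\mathcal{F}}$ is even needed: $(x\Rightarrow y)^{\leftarrow}_{\mathcal{F}}$ itself is already finite.
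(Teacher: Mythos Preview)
Your approach is correct and matches the paper's one-line proof: show that $(x\Rightarrow y)^{\leftarrow}$ is finite and invoke Corollary~\ref{cor:fmpgeneral}. You supply the details the paper leaves implicit (bounded complexity plus finite alphabet of subformulas).

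One imprecision is worth flagging. You justify the subformula bound by invoking ``the subformula property of $\mathrm{D}$ (which holds by condition $\mathrm{C}_1$ together with cut elimination, cf.\ Theorem~\ref{thm:meta})''. But Theorem~\ref{thm:meta} concerns \emph{derivations of provable sequents}, whereas $(x\Rightarrow y)^{\leftarrow}$ is the backward-search closure and typically contains unprovable sequents, so the calculus-level subformula property does not apply to it. What you actually need---and what you correctly name---is condition $\mathrm{C}_1$ applied \emph{per rule instance}: every formula (and every structure variable) occurring in a premise already occurs in the conclusion. A direct induction on Definition~\ref{def:proof search} then gives that every formula in any sequent of $(x\Rightarrow y)^{\leftarrow}$ is a subformula of one in $x\Rightarrow y$, with no detour through cut elimination. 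This is consistent with your own remark that Cut must be excluded: Cut violates both $\mathrm{C}_1$ and the complexity hypothesis, so neither the subformula bound nor the complexity bound would hold for it.
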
 
\begin{proof}
	The assumptions imply that the set $(x \Rightarrow y)^\leftarrow$ is finite and therefore the assumptions of Corollary \ref{cor:fmpgeneral} are satisfied.
\end{proof}
	
\begin{prop}\label{prop:refinedfmp}\begin{enumerate}
		\item If $\varphi'_{\mathcal{F}}$ is an equivalence relation such that $\varphi'_{\mathcal{F}}\subseteq\varphi_{\mathcal{F}}$ and moreover $\{[x']_{\varphi'_{\mathcal{F}}}\mid x'\in (x\Rightarrow y)^{\leftarrow}_{\mathcal{F}}\}$ is finite, then the FMP holds for $\mathrm{D}$.  
		\item If $\varphi'_{\mathcal{G}}$ is an equivalence relation such that  $\varphi'_{\mathcal{G}}\subseteq\varphi_{\mathcal{G}}$ and moreover $\{[x']_{\varphi'_{\mathcal{G}}}\mid x'\in (x\Rightarrow y)^{\leftarrow}_{\mathcal{F}}\}$ is finite, then the FMP holds for $\mathrm{D}$. 
	\end{enumerate}
\end{prop}
\begin{proof}
1. If $\varphi'_{\mathcal{F}}\subseteq\varphi_{\mathcal{F}}$, then every equivalence class of $\varphi_{\mathcal{F}}$ is the union of equivalence classes of $\varphi'_{\mathcal{F}}$. Hence, the assumption that $\{[x']_{\varphi'_{\mathcal{F}}}\mid x'\in (x\Rightarrow y)^{\leftarrow}_{\mathcal{F}}\}$ is finite guarantees that the assumptions of Corollary \ref{cor:fmpgeneral} are satisfied, and hence the statement follows by Theorem \ref{thm:FMP}.
\end{proof}
 
The proposition above provides us with an effective strategy to prove the FMP. Indeed, while computing $\varphi_{\mathcal{F}}$ or $\varphi_{\mathcal{G}}$ can be practically unfeasible, it is in fact enough to produce a suitable refinement of them. We will illustrate this technique in Sections \ref{ssec:ortho}. 

 



\section{Examples}\label{sec:exfmp}
 
In the present section, we will obtain cut elimination and FMP for concrete instances of LE-logics as a consequence of Theorems \ref{thm:semantic cut elim ax ext} and \ref{thm:FMP}.

\subsection{Basic epistemic logic of categories and running example}

The language of the basic {\em epistemic logic of categories} \cite{conradie2016categories,CFPPTW17}, denoted $\mathcal{L}_{\mathrm{ML}}$, is obtained by instantiating $\mathcal{F}: = \varnothing$ and $\mathcal{G} = \{\wbox\}$ with $n_\wbox = 1$ and $\varepsilon_\wbox = 1$. 

Clearly, Theorem \ref{thm:semantic cut elim ax ext} applies to the calculus $\mathrm{D.LE}$ for the basic $\mathcal{L}_{\mathrm{ML}}$-logic and to any calculus $\mathrm{D.LE}'$ obtained by adding any analytic structural rule to $\mathrm{D.LE}$ for instance those corresponding to the factivity and positive introspection axioms, $\wbox p\leq p$ and $\wbox\wbox p\leq \wbox p$ respectively. Moreover,  Proposition \ref{prop:compdecrfmp} applies to $\mathrm{D.LE}$ and any calculus $\mathrm{D.LE}'$ obtained by adding any analytic structural rule to $\mathrm{D.LE}$ such that the complexity of sequents does not increase from bottom to top.
This result covers FMP for  the display calculi capturing the epistemic logic of categories with positive introspection, since the corresponding structural rule is
\begin{center}
\AX$\Gamma\fCenter\WBOX\Delta$
\UI$\Gamma\fCenter\WBOX\WBOX\Delta$
\DP
\end{center}
which satisfies the conditions of Proposition \ref{prop:compdecrfmp}, since the size of the sequents decreases when the rule is applied bottom up.

Theorem \ref{thm:semantic cut elim ax ext} also applies for the logic of Example \ref{ex:1} and all its analytic extensions, e.g.\ the extension by the rule given in Example \ref{ex:rule}. Furthermore, that rule satisfies the conditions of Proposition \ref{prop:compdecrfmp}, since the size of the sequent is unchanged when the rule is applied bottom up, hence the extension of Example \ref{ex:rule} also has the FMP.

\subsection {Full Lambek calculus}

The language of the {\em full Lambek calculus} \cite{galatos2007residuated}, denoted $\mathcal{L}_{\mathrm{FL}}$, is obtained by instantiating $\mathcal{F} = \{e, \circ\}$ with $n_e = 0$, $n_\circ  = 2$, $\varepsilon_\circ = (1, 1)$   and   $\mathcal{G} = \{\backslash, /\}$ with $n_\backslash = n_/ = 2$, $\varepsilon_\backslash = (\partial, 1)$  and $\varepsilon_/ = (1, \partial)$. 

Clearly, Theorem \ref{thm:semantic cut elim ax ext} applies to the calculus $\mathrm{D.LE}$ for the basic $\mathcal{L}_{\mathrm{FL}}$-logic and to any calculus $\mathrm{D.LE}'$ obtained by adding any analytic structural rule to $\mathrm{D.LE}$. This result covers the semantic cut elimination for any display calculus for axiomatic extensions of the basic $\mathcal{L}_{\mathrm{FL}}$-logic with $\mathcal{N}_2$ axioms (cf.~\cite{ciabattoni2012algebraic}).  Moreover,  Proposition \ref{prop:compdecrfmp} applies to $\mathrm{D.LE}$ and any calculus $\mathrm{D.LE}'$ obtained by adding any analytic structural rule to $\mathrm{D.LE}$ such that the complexity of sequents does not increase from bottom to top.
This result covers FMP for  the display calculi capturing the nonassociative full Lambek calculus (cf.\ \cite{buszkowski2009nonassociative}),  the full Lambek calculus (which corresponds to $\mathrm{D.LE}$ plus associativity),  and its axiomatic extensions with commutativity, weakening, and simple rules that do not increase the complexity of sequents from bottom to top (cf.~\cite[Theorem 3.15]{galatos2013residuated}).

\subsection{Full Lambek-Grishin calculus}
\label{sec:FullLambekGrishincalculus}
The language of the {\em full Lambek-Grishin calculus} (cf.\ \cite{moortgat2007symmetries}), denoted $\mathcal{L}_{\mathrm{FLG}}$, is given by $\mathcal{F} = \{\circ, \starfor, \starback \}$ and $\mathcal{G} = \{\star, \circfor, \circback\}$ such that $\varepsilon_\circ = \varepsilon_\star = (1, 1)$,  $\varepsilon_{\starback} = \varepsilon_{\circback} = (\partial, 1)$ and $\varepsilon_{\starfor} = \varepsilon_{\circfor} = (1, \partial)$. 

One can explore the space of the axiomatic extensions of the basic $\mathcal{L}_{\mathrm{FLG}}$-logic with the following {\em Grishin interaction principles} \cite{grishin1983}:

\begin{center}
\begin{tabular}{c l @{} c @{} l @{} c c l @{} c @{} l  c c }
(a)&$(p\star q)\circ r    $&\ $\vdash$\ \,&$ p\star(q\circ r)       $&$\quad$& 
(d)&$(p\starback q)\circback r $&\ $\vdash$\ \,& $q\circback (p\star r)$&$\quad$&\\
(b)&$ p\star(q\circfor r) $&\ $\vdash$\ \,&$(p\star q)\circfor r    $&$\quad$& 
(e)&$(p\circ q)\starfor r $&\ $\vdash$\ \,&$ p\starfor (r\circfor q)$&$\quad$&(I)\\
(c)&$ p\starback(q\circ r)$&\ $\vdash$\ \,&$(p\starback q)\circ r   $&$\quad$& 
(f)&$ p\circ(q\circback r)$&\ $\vdash$\ \,&$(p\starfor q)\star r    $&$\quad$&\\
   & &      & &           & &      & &       &\\
(a)&$(p\circback q)\circ r   $&\ $\vdash$\ \,&$ p\circback (q\circ r)   $&$\quad$& 
(d)&$(p\circ q)\circback r   $&\ $\vdash$\ \,&$ q\circback(p\circback r)$&$\quad$&\\
(b)&$ p\circback(q\circfor r)$&\ $\vdash$\ \,&$(p\circback q)\circfor r $&$\quad$& 
(e)&$(p\circfor q)\circfor r $&\ $\vdash$\ \,&$ p\circfor (r\circ q)$&$\quad$&(II)\\
(c)&$ p\circ(q\circ r)       $&\ $\vdash$\ \,&$(p\circ q)\circ r        $&$\quad$& 
(f)&$ p\circ(q\circback r)   $&\ $\vdash$\ \,&$(q\circfor p)\circback r $&$\quad$&\\
   & &      & &           & &      & &       &\\
(a)&$ p\starfor(q\star r)    $&\ $\vdash$\ \,&$(p\starback q)\star r   $&$\quad$& 
(d)&$ p\starfor(q\star r)    $&\ $\vdash$\ \,&$(p\starfor r)\starfor q $&$\quad$&\\
(b)&$(p\star q)\star r       $&\ $\vdash$\ \,&$ p\star (q\star r)      $&$\quad$& 
(e)&$ p\starback (q\starback r)$&\ $\vdash$\ \,&$(q\star p)\starback r $&$\quad$& (III)\\
(c)&$(p\starback q)\starfor r$&\ $\vdash$\ \,&$p\starback(q\starfor r) $&$\quad$& 
(f)&$(p\starfor q)\starback r$&\ $\vdash$\ \,&$q\star (p\starback r)   $&$\quad$&\\
   & &      & &           & &      & &       &\\
(a)&$(p\circback q)\starfor r $&\ $\vdash$\ \,&$ p\circback(q\starfor r)$&$\quad$& 
(d)&$(p\circback q)\starback r$&\ $\vdash$\ \,&$ q\starback (p\circ r)  $&$\quad$&\\
(b)&$ p\circback(q\star r)    $&\ $\vdash$\ \,&$(p \circback q)\star r  $&$\quad$& 
(e)&$(p\star q)\circfor r   $&\ $\vdash$\ \,&$ p\circfor(r\starfor q)$&$\quad$&(IV)\\
(c)&$ p\circ(q\starfor r)     $&\ $\vdash$\ \,&$(p\circ q) \starfor r   $&$\quad$& 
(f)&$ p\starfor(q\starback r) $&\ $\vdash$\ \,&$(r\circfor p)\circback q$&$\quad$&\\
\end{tabular}
\end{center}

As observed in \cite[Remark 5.3]{conradie2019algorithmic},  all these axioms are analytic inductive, and hence they can all be transformed into analytic structural rules (cf.~\cite{greco2018unified}). For instance:
\[p\starback(q \,\circ\, r)\leq (p\starback q) \,\circ\, r \ \  \rightsquigarrow \ \ 
\AX$(\Delta_1 \,\hat{\starback}\, \Gamma_1) \,\hat{\circ}\, \Gamma_2 \fCenter \Delta_2$
\UI$\Delta_1 \,\hat{\starback}\, (\Gamma_1 \,\hat{\circ}\, \Gamma_2) \fCenter \Delta_2$
\DP
 \ \ \rightsquigarrow \ \ 
\AX$\Delta_1 \,\hat{\starback}\, \Gamma_1 \fCenter \Delta_2 \,\check{\circfor}\, \Gamma_2$
\UI$\Gamma_1 \,\hat{\circ}\, \Gamma_2 \fCenter \Delta_1 \,\check{\star}\, \Delta_2$
\DP
\]

By Theorem \ref{thm:semantic cut elim ax ext}, any calculus $\mathrm{D.LE}'$ obtained by adding any or more of these rules to the calculus $\mathrm{D.LE}$ for the basic $\mathcal{L}_{\mathrm{LG}}$-logic has semantic cut elimination. Moreover, in each of these rules,   the complexity of sequents does not increase from bottom to top. Hence by Proposition \ref{prop:compdecrfmp}, 
FMP holds for any $\mathrm{D.LE}'$. This captures the decidability result of \cite{moortgat2007symmetries}.

\subsection {Orthologic}\label{ssec:ortho}

The language of {\em Orthologic} (cf.\ \cite{goldblatt1974semantic}), denoted $\mathcal{L}_{\mathrm{Ortho}}$, is obtained by instantiating $\mathcal{F} = \{\sim\}$ and $\mathcal{G} = \{\neg, \abot\}$ with $n_{{\sim}} = n_{\neg} = 1, \varepsilon_{{\sim}} = \varepsilon_{\neg} = \partial$. The display calculus D.Ortho for the basic $\mathcal{L}_{\mathrm{Ortho}}$-logic contains the standard operational rules for $\sim$ and $\neg$ plus the display postulates $(\hat{\sim}, \hat{\sim}^\sharp)$ and $(\check{\neg}, \check{\neg}^\flat)$ (see section \ref{ssec:syntactic frames associated w algebras}). 

Orthologic is the axiomatic extensions of the basic $\mathcal{L}_{\mathrm{Ortho}}$-logic with the following sequents (cf.~\cite[Definition 1.1]{goldblatt1974semantic}):\footnote{Notice that the original signature of orthologic contains only one negation that is both a left and a right adjoint and, moreover, it is a self-adjoint. We have the axiom $\fneg p \dashv\vdash \gneg p$ because we decide to start with a signature $\mathcal{L}_{LE}(\mathcal{F},\mathcal{G})$ with $\mathcal{F}$ and $\mathcal{G}$ disjoint (see the definition of LE-logics and and Footnote \ref{footnote:fg}  in Section \ref{ssec:basic}).}
 $${\sim} p \dashv \vdash \neg p \qquad \abot \vdash p \qquad p \wedge \neg p \vdash \abot \qquad p \dashv\vdash \neg\neg p.$$

The axiom $\abot \vdash p$ is captured by the rule $\ABOT_W$ (see section \ref{ssec:syntactic frames associated w algebras}). The other axioms are analytic inductive, and hence, by the procedure outlined in \cite{greco2018unified}, they can be transformed into analytic structural rules:
\begin{center}
\begin{tabular}{cccc}
$\ofneg p \dashv\vdash \ogneg p$
 & $\rightsquigarrow$ & 
\AX$\Gamma \fCenter \Delta$
\UI$\OFNEG \Delta \fCenter \OGNEG \Gamma$
\DP
 & 
\AX$\OFNEG \OGNEG \Gamma \fCenter \Delta$
\UI$\Gamma \fCenter \Delta$
\DP
 \\
 & & & \\
$p \wedge \neg p \vdash \abot$
 & $\rightsquigarrow$ & 
\AX$\Gamma \fCenter \OGNEG \Gamma$
\UI$\Gamma \fCenter \ABOT$
\DP 
 & 
\AX$\Gamma_1 \fCenter \OGNEG \Gamma_2$
\doubleLine
\UI$\Gamma_2 \fCenter \OGNEG \Gamma_1$
\DP
 \\
 & & & \\
$p \dashv\vdash \neg\neg p$
 & $\rightsquigarrow$ & 
\AX$\Gamma \fCenter \OGNEG \OFNEG \Delta$
\UI$\Gamma \fCenter \Delta$
\DP
 & 
 \\
\end{tabular}
\end{center}
Let $\mathrm{D.LE}$ be the calculus for the basic $\mathcal{L}_{\mathrm{Ortho}}$-logic, and let $\mathrm{D.LE}'$ be the calculus obtained by adding the rules above to $\mathrm{D.LE}$. Theorem \ref{thm:semantic cut elim ax ext} directly applies to $\mathrm{D.LE}'$. In what follows we will show that Proposition \ref{prop:refinedfmp} can be applied to $\mathrm{D.LE}'$, by defining $\varphi'_{\mathcal{F}}$ (resp.\ $\varphi'_{\mathcal{G}}$)  as follows 
 $$\varphi'_\mathcal{F} \ceq \{(\Gamma, (\OFNEG\OGNEG)^{n} \Gamma), ((\OFNEG\OGNEG)^{m} \Gamma, \Gamma): n, m\in\mathbb{N} \mbox{ and } \Gamma \in \mathsf{Str}_\mathcal{F} \},$$ 
 $$\varphi'_\mathcal{G} \ceq \{(\Delta, (\OGNEG\OFNEG)^{n} \Delta), ((\OGNEG\OFNEG)^{m} \Delta, \Delta): n, m\in\mathbb{N} \mbox{ and } \Delta \in \mathsf{Str}_\mathcal{G} \}$$
Clearly, $\varphi_\mathcal{F}$ and $\varphi_\mathcal{G}$ are congruences. The applicability of Proposition \ref{prop:refinedfmp} is an immediate consequence of the following.
\begin{lem}
	\begin{enumerate}
		\item $\varphi'_\mathcal{F}\subseteq\varphi_\mathcal{F}$.
		\item For every sequent $\Gamma \Rightarrow \Delta$ the set $\{[\Gamma']_{\varphi'_{\mathcal{F}}}\mid \Gamma'\in (\Gamma \Rightarrow \Delta)^{\leftarrow}_{\mathcal{F}}\}$ is finite.
	\end{enumerate}
\end{lem}
\begin{proof}
	1. By $m$ consecutive applications of the rule 
	\begin{center}
	\AX$\OFNEG\OGNEG \Gamma \fCenter \Delta$
	\UI$\Gamma \fCenter \Delta$
	\DP	
	\end{center}
we obtain the derivability of the following rule
 \begin{center}
	\AX${(\OFNEG\OGNEG)^{m}\,} \Gamma \fCenter \Delta$
	\UI$\Gamma \fCenter \Delta$
	\DP	
\end{center}
Likewise, by $n$ consecutive applications of the following sequence of rules 
\begin{center}
\AX$\Gamma \fCenter \Delta$
\UI$\OFNEG \Delta \fCenter \OGNEG \Gamma$
\UI$\OFNEG\OGNEG \Gamma \fCenter \Delta$
\DP
\end{center}
we obtain the derivability of the following rule
\begin{center}
\AX$\Gamma \fCenter \Delta$
\UI${(\OFNEG\OGNEG)^{n}\,} \Gamma \fCenter \Delta$
\DP	
\end{center}
2. Fix a sequent $\Gamma \Rightarrow \Delta$. It is enough to show that if $\Pi\in(\Gamma\Rightarrow \Delta)^{\leftarrow}_{\mathcal{F}}\cup(\Gamma \Rightarrow \Delta)^{\leftarrow}_{\mathcal{G}}$ then $(\Pi,\Pi')\in\varphi'_{\mathcal{F}}\cup\varphi'_{\mathcal{G}}$ for some structure $\Pi'$ belonging to the following finite set: 
\begin{align*}\Sigma \ceq & \ \mathrm{SUB}(\Gamma, \Delta)\,\cup\,\mathrm{sub}(\Gamma, \Delta)\,\cup\,
\\&\OFNEG(\mathrm{SUB}(\Gamma, \Delta)\,\cup\,\mathrm{sub}(\Gamma, \Delta))\,\cup\,\OGNEG(\mathrm{SUB}(\Gamma, \Delta)\,\cup\,\mathrm{sub}(\Gamma, \Delta))\,\cup\,\{\ABOT,\OFNEG \ABOT\},
\end{align*}
 where $\mathrm{SUB}(\Psi)$ is the set of substructures of $\Psi$, $\mathrm{sub}(\Psi)$ is the set of subformulas of formulas in $\mathrm{SUB}(\Psi)$, $\OGNEG X=\{\OGNEG \Psi \mid \Psi \in X\}$ and $\OFNEG X=\{\OFNEG \Psi \mid \Psi \in X\}$ for any set of structures $X$. We proceed by induction on the inverse proof-trees. The base case, i.e.~$\Pi\in\{\Gamma,\Delta\}$, is clear. As to the inductive step, the proof proceeds by inspection on the rules. The cases regarding applications of introduction rules or structural rules of $\mathrm{D.LE}$ which reduce the complexity of sequents when applied bottom-up are straightforward and omitted. Let $\Lambda \Rightarrow \Theta \in (\Gamma \Rightarrow \Delta)^{\leftarrow}$ and assume that $(\Lambda,\Lambda'),(\Theta,\Theta')\in\varphi'_{\mathcal{F}}\cup\varphi'_{\mathcal{G}}$ for some $\Lambda',\Theta'\in\Sigma$. Then, the bottom-up application of one of the following rules 
\[\AX$\OFNEG\OGNEG \Gamma \fCenter \Delta$
\UI$\Gamma \fCenter \Delta$
\DP
\quad
\AX$\Gamma \fCenter \OGNEG \OFNEG \Delta$
\UI$\Gamma \fCenter \Delta$
\DP
 \quad 
\AX$\Gamma \fCenter \ONEG \Gamma$
\UI$\Gamma \fCenter \ABOT$
\DP
 \quad 
\AX$\Gamma \fCenter \ABOT$
\UI$\Gamma \fCenter \Delta$
\DP\]
to $\Lambda \Rightarrow \Theta$ yields $\OFNEG\OGNEG \Lambda \Rightarrow \Theta$, $\Lambda \Rightarrow \OGNEG\OFNEG \Theta$, $\Lambda \Rightarrow \ONEG \Lambda$ and $\Lambda \Rightarrow \ABOT$ respectively. Hence, $(\OFNEG\OGNEG \Lambda,\Lambda),(\Lambda,\Lambda')\in\varphi'_{\mathcal{F}}$, $(\OGNEG\OFNEG \Theta,\Theta),(\Theta,\Theta')\in\varphi'_{\mathcal{G}}$ and therefore $(\OFNEG\OGNEG \Lambda,\Lambda')\in\varphi'_{\mathcal{F}}$, $(\OGNEG\OFNEG \Theta,\Theta')\in\varphi'_{\mathcal{G}}$, $(\OGNEG \Lambda, \OGNEG \Lambda')\in\varphi'_{\mathcal{G}}$, $\ABOT\in\Sigma$. Finally, if $\Lambda'\in \mathrm{SUB}(\Gamma, \Delta) \cup \mathrm{sub}(\Gamma, \Delta)$ then $\OGNEG \Lambda \in \Sigma$, and if $\Lambda'\in \OFNEG(\mathrm{SUB}(\Gamma, \Delta) \cup \mathrm{sub}(\Gamma, \Delta))$ then $\Lambda'$ is $\OFNEG \Lambda''$ where $\Lambda'' \in \Sigma$. Therefore $(\OGNEG \Lambda, \OGNEG \OFNEG \Lambda'') \in \varphi'_{\mathcal{G}}$ and $(\OGNEG \OFNEG \Lambda, \Lambda'') \in \varphi'_{\mathcal{G}}$, so $(\OGNEG \Lambda, \Lambda'') \in \varphi'_{\mathcal{G}}$.
\end{proof}

\section{Conclusions and further directions}\label{sec:concl}
\paragraph{Contributions of the present paper.} This paper extends the research programme in algebraic proof theory from substructural logics to arbitrary normal LE-logics. Our original contributions concern, on the proof-theoretic side, the use of display calculi in the context of algebraic proof theory; on the algebraic side, the use of the canonical extension construction and the constructive canonicity of analytic inductive inequalities \cite{conradie2016constructive}. 

\paragraph{LE-logics as generalized modal logics.} Our use of canonical extensions (see paragraph above) reflects the fact that the results of the present paper are grounded on insights which derive from duality theory in modal logic. To emphasize this aspect, we use terminology which is closer to the literature in modal logic than to the literature in universal algebra. The results of the present paper pertain to a research strand which stems from the generalized Sahlqvist theory for LE-logics \cite{conradie2019algorithmic} and has given rise not
only to the canonicity results which are at the heart of the semantic cut
elimination of the present paper, but also to a systematic way of
defining various types of semantics for LE-logics \cite{conradie2016categories,CFPPTW17,ModellingInformationalEntropy,VectorSpacesAsKripkeFrames} and their {\em many-valued} versions \cite{GJMPT19,RoughConcepts,ModellingCompetingTheories,TheLogicOfVagueCategories,ModellingSocioPoliticalCompetition}
which are also connected to (generalized) probabilistic logics \cite{TowardsaDempsterShaferTheoryOfConcepts}. Not
only do these lines of research have a common root with the ones of the present paper, but point at the possibility to create an overarching
mathematical environment in which results such as canonicity, semantic
cut elimination and the Goldblatt-Thomason theorem \cite{CPT18} can be formulated
and proved in a uniform and parametric way for two-valued and
many-valued logics, and 
 in which duality-theoretic results, algebraic results and proof-theoretic results are used in synergy rather than in isolation. 

\paragraph{MacNeille completions and complex algebras.} Key to our results is the notion of functional D-frame, obtained as the direct generalization of residuated frames in \cite{galatos2013residuated}. The name emphasizes that the relation associated with the additional connectives $f\in\mathcal{F}$ and $g\in\mathcal{G}$ are functional (cf.\ Section \ref{sec: functional L-frames}). Then the construction corresponding to taking the MacNeille completion of a  residuated frame is the complex algebra construction based on the concept lattice associated with any polarity. Further directions involve making use of this insight towards the generalization of the characterization results in \cite{ciabattoni2012algebraic} to the general setting of normal LE-logics.

\paragraph{More metalogical properties via algebraic proof theory.} A natural prosecution of this research programme concerns uniforms proofs of metatheoretic properties of (classes of) LE-logics, such as finite embeddability property, disjunction property, Craig interpolation. These results typically lie at the interface between algebra and proof theory. On the proof-theoretic side, the present setting provides a platform for testing the potential of display calculi in obtaining results which are typically obtained via Gentzen calculi  (cf.\ \cite{ono1998proof,galatos2010cut,galatos2013residuated}). On the algebraic side, the present setting allows to extract the essentials of very elegant and meaningful proofs obtained in the literature (cf.\ \cite{blok2002finite,blok2005finite,galatos2006algebraization,souma2007algebraic,kihara2008algebraic,kihara2009interpolation,horvcik2011disjunction,maksimova2012amalgamation}) for specific signatures and make them independent of a specific language. 

\paragraph{Projection onto distributive LE-logics.} The present setting smoothly addresses the semantic cut-elimination for distributive LE-logics, i.e.~logics algebraically captured by varieties of normal lattice expansions the lattice reducts of which are distributive. Key to this is the observation that any binary fusion-type (resp.\ fission-type) connective for which the analytic structural rules weakening, exchange and contraction hold must coincide with conjunction (resp.\ disjunction). However the general FMP result does not directly apply because contraction is a prime example of a rule violating the assumption of Proposition \ref{prop:compdecrfmp}. We are currently investigating whether a more sophisticated route allows us to encompass FMP for classes of distributive LE-logics. 

\bibliography{bib}
\bibliographystyle{plain}

\appendix

\section{Proper display calculi  and analytic structural rules}	\label{ap:display}
In this section, we recall the definition of analytic structural rules which is introduced in \cite{greco2018unified}. This definition is tightly connected with the notion of  {\em proper display calculus} (cf.~\cite{Wa98}), since it is aimed at guaranteeing that adding an analytic structural rule to a proper display calculus preserves cut elimination and subformula property. 


First of all let us recall some terminology (see e.g.~\cite[Section 4.1]{Wa98}) and notational conventions. A {\em sequent}, also called a consecution in the display calculi literature, is a pair $\Gamma \fCenter \Delta$ where $\Gamma$ and $\Delta$ are structures, $\Gamma$ is called the {\em antecedent} and $\Delta$ is called the {\em consequent} of the sequent. An {\em inference} $r$, also called an instance of a rule, is a pair $(S, s)$ of a (possibly empty) set of sequents $S$ (the premises) and a sequent $s$ (the conclusion). A {\em rule} (of inference) $R$, also called a scheme, is a schematic inference using meta-variables for structures (in this paper we use capital-case Greek letters) or meta-variables for formulas (in this paper we use lower-case Greek letters). We identify a rule $R$ with the set of all inferences that are instantiations of $R$. A rule $R$ with no premises, i.e.~$S = \emptyset$, is called an {\em axiom scheme}, and an instantiation of such $R$ is called an {\em axiom}. The immediate subformulas of a principal formula (see Definition \ref{def:ParametersCongruenceHistoryTree}) in the premise(s) of an operational inference are called {\em auxiliary formulas}. The formulas that are not preserved in an inference instantiating the cut rule are called {\em cut formulas}. If the cut formulas are principal in an inference instantiating the cut rule, then the inference is called {\em principal cut}. A {\em proof} of (the instantiation of) a sequent $\Gamma \fCenter \Delta$ is a tree where (the instantiation of) $\Gamma \fCenter \Delta$ appears as the end-sequent, all the leaves are axioms, and each node is introduced via an inference. 
\begin{definition}
A proof system enjoys the {\em display property} if for every sequent $\Gamma \fCenter \Delta$ and every substructure $\Sigma$ of either $\Gamma$ or $\Delta$, the sequent $\Gamma \fCenter \Delta$ can be equivalently transformed, using the rules of the system, into a sequent which is either of the form $\Sigma \fCenter \Pi$ or of the form $\Pi \fCenter \Sigma$, for some structure $\Pi$. In the first case, $\Sigma$ is displayed in {\em precedent position}, and in the second case, $\Sigma$ is displayed in {\em succedent position}. The rules enabling this equivalent rewriting are the {\em display postulates}. 
\end{definition}
\begin{remark}
In other words, in a calculus enjoying the display property, any substructure of any sequent $\Gamma \fCenter \Delta$ is always displayed either only in precedent position or only in succedent position. This is why we can talk about occurrences of substructures in precedent or in succedent position, even if they are nested deep within a given sequent. If a structure $\Sigma$ occurs in the sequent $\Gamma \fCenter \Delta$ in precedent (resp.~succedent) position we write $(\Gamma \fCenter \Delta)[\Sigma]^{pre}$ (resp.~$(\Gamma \fCenter \Delta)[\Sigma]^{suc}$).
\end{remark}
	
An {\em analysis} of the rules provides the notions of `parameters' and `congruence' and makes it possible to decide whether a structure (resp.~formula) is preserved or introduced in an inference in a principled way. `Intuitively, parameters are structure occurrences which are either held constant from premises to conclusion or are introduced with no regard to their shape (e.g.~structures introduced by weakening are considered parameters)', and the congruence relation is meant to identify the different occurrences of the same substructure along the branches of a proof \cite[Section 4]{Belnap}, \cite[Definition 6.5]{Restall}. In what follows we provide a formal definition of these intuitive notions.\footnote{To the best of our knowledge, a completely rigorous definition of these intuitive notions was never provided in the display calculi literature. \cite[Definition 6.5]{Restall} and the terminology recalled in \cite[Section 4.1]{Wa98} rephrase the definition of congruent parameters proposed by Belnap in his 1987 seminal paper \cite[Definition 4.1]{Belnap}: ``Constituents occupying similar positions in occurrences of structures assigned to the same structure-variable are defined {\em congruent} in {\em Inf}''. We notice that the expression `similar positions' is not formally defined and we observe that a formal definition presupposes condition C$_2$.} 
\begin{definition}
\label{def:ParametersCongruenceHistoryTree}
{\em Specifications} are instantiations of structure meta-variables in the statement of a rule $R$. The {\em parameters} of $r \in R$ are {\em substructures} of instantiations of structure meta-variables in the statement of $R$. A formula instance is {\em principal} in an inference $r \in R$ if it is not a parameter in the conclusion of $r$. Structure occurrences in an inference $r \in R$ are in the (symmetric) relation of {\em local congruence} if they instantiate the same meta-variable in the statement of $R$. Therefore, the local congruence is a relation between specifications.
\end{definition}

We are ready now to recall the conditions C$_1$-C$_8$ defining a {\em proper display calculus}. Our presentation closely follows \cite[Section 2.2]{greco2018unified}.

	\paragraph*{ C$_1$: Preservation of formulas.} This condition requires each formula occurring in a premise of a given inference to be a subformula of some formula in the conclusion of that inference. 
This condition is not included in the list of sufficient conditions of the cut elimination metatheorem (see Theorem \ref{thm:meta}), but, in the presence of cut elimination, it guarantees the \emph{subformula property} of a proof system.
	Considering the rules of $\mathrm{D.LE}$ given above, condition C$_1$ can be verified by inspection, noting that only the Cut rule fails C$_1$. In practice, condition C$_1$ also prohibits rules in which structure variables occurring in some premise do not occur as well in the conclusion, since in concrete derivations these are typically instantiated with (structures containing) formulas which would then disappear in the application of the rule.
	\paragraph*{ C$_2$: Shape-alikeness.}
	This condition is based on the relation of local congruence between specifications in inferences. 
	Condition C$_2$ requires that locally congruent specifications are occurrences of the same structure. This can be understood as a condition on the {\em design} of the rules of the system if the local congruence relation is understood as part of the definition of each given rule; that is, each schematic rule of the system comes with an explicit description of which elements are locally congruent to which via a notational convention. 
	In this respect, C$_2$ is nothing but a sanity check, requiring that the local congruence is defined in such a way that it indeed identifies the occurrences which are intuitively ``the same''.\footnote{Our convention throughout the paper is that congruent parameters are denoted by the same letter. For instance, in the rule $$\frac{\Gamma \circ \Gamma' \vdash \Delta}{\Gamma' \circ \Gamma \vdash \Delta}$$ the structures $\Gamma \circ \Gamma'$ and $\Delta$ are parametric and the occurrences of $\Gamma$ (resp.~$\Gamma'$, $\Delta$) in the premise and the conclusion are congruent.} 

Condition C$_2$ guarantees that locally congruent specifications have the same generation tree, therefore we can give the following:

\begin{definition}
Two parameters are in the {\em inference congruence relation} if they correspond to the same subtree of the generation tree of two locally congruent specifications. The {\em proof congruent relation} is the transitive closure of the {\em inference congruence relation}. 
\end{definition}
	
	\paragraph*{C$_3$: Non-proliferation.} Like the previous one, also this condition is actually about the definition of the local congruence relation on specifications. Condition C$_3$ requires that, for every inference, each of its specifications is locally congruent to at most one specification in the conclusion of that inference. Hence, the condition stipulates that for a rule such as the following is de facto banned
	\begin{center}
		\AX$\Gamma \fCenter \Delta$
		\UI$\Gamma \circ \Gamma \fCenter \Delta$
		\DisplayProof
	\end{center}
\noindent because, given the notational conventions on the design of rules, the structure $\Gamma$ from the premise is declared locally congruent to two occurrences of $\Gamma$ in the conclusion sequent. Vice versa, the following rule is acceptable 
\begin{center}
		\AX$\Gamma \fCenter \Delta$
		\UI$\Gamma' \circ \Gamma \fCenter \Delta$
		\DisplayProof
	\end{center}
\noindent because the specification of $\Gamma$ (resp.~$\Delta$) from the premise is declared locally congruent to the specification of $\Gamma$ (resp.~$\Delta$) in the conclusion, and the specification of $\Gamma'$ in the conclusion is declared locally congruent to itself. 
In practice, in the general schematic formulation of rules, we will use the same structure meta-variable for two different occurrences of specifications if and only if they are locally congruent. 
		
	\paragraph*{C$_4$: Position-alikeness.} This condition bans any rule in which a (sub)structure in precedent (resp.~succedent) position in a premise is locally congruent to a (sub)structure in succedent (resp.\ precedent) position in the conclusion.
	
	\bigskip
	
	\paragraph*{C$_5$: Display of principal constituents.} This condition requires that any principal occurrence (that is, a non-parametric formula occurring in the conclusion of a rule application, cf.\ \cite[Condition C5]{Belnap}) be always either the entire antecedent or the entire consequent part of the sequent in which it occurs.
	
	\bigskip
	The following conditions C$_6$ and C$_7$ are not reported below as they are stated in the original paper \cite{Belnap}, but as they appear in \cite[Section 4.1]{Wa98}. 
	%
	
	\paragraph*{C$_6$: Closure under substitution for succedent parameters.} This condition requires each rule to be closed under simultaneous substitution of arbitrary structures for formulas which are congruent parameters occurring in succedent position.
	Condition C$_6$ ensures, for instance, that if the following inference is an application of the rule $R$:
	
	\begin{center}
		\AX$(\Gamma \fCenter \Delta) \big([\varphi]^{suc}_{i} \,|\, i \in I\big)$
		\RightLabel{$R$}
		\UI$(\Gamma' \fCenter \Delta') [\varphi]^{suc}$
		\DisplayProof
	\end{center}
	
	\noindent and $\big([\varphi]^{suc}_{i} \,|\, i \in I\big)$ represents all and only the occurrences of $\varphi$ in the premiss which are congruent to the occurrence of $\varphi$ in the conclusion\footnote{Clearly, if $I = \varnothing$, then the occurrence of $\varphi$ in the conclusion is congruent to itself.}, 
	then also the following inference is an application of the same rule $R$:
	
	\begin{center}
		\AX$(\Gamma \fCenter \Delta) \big([\Sigma/\varphi]^{suc}_{i} \,|\, i \in I\big)$
		\RightLabel{$R$}
		\UI$(\Gamma' \fCenter \Delta') [\Sigma/\varphi]^{suc}$
		\DisplayProof
	\end{center}
	
	\noindent where the structure $\Sigma$ is substituted for $\varphi$.
	
	\noindent This condition caters for the step in the cut elimination procedure in which the cut needs to be ``pushed up'' over rules in which the cut-formula in succedent position is parametric. Indeed, condition C$_6$ guarantees that, in the picture below, a well-formed subtree $\pi_1[\Delta/\varphi]$ can be obtained from $\pi_1$ by replacing any occurrence of $\varphi$ corresponding to a node in the history tree of the cut-formula $\varphi$ by $\Delta$, and hence the following transformation step is guaranteed go through uniformly and ``canonically'':
	
	\begin{center}
		\bottomAlignProof
		\begin{tabular}{lcr}
			\AXC{\ \ \ $\vdots$ \raisebox{1mm}{$\pi'_1$}}
			\noLine
			\UI$\Gamma' \fCenter \varphi$
			\noLine
			\UIC{\ \ \ $\vdots$ \raisebox{1mm}{$\pi_1$}}
			\noLine
			\UI$\Gamma \fCenter \varphi$
			\AXC{\ \ \ $\vdots$ \raisebox{1mm}{$\pi_2$}}
			\noLine
			\UI$\varphi \fCenter \Delta$
			\BI$\Gamma \fCenter \Delta$
			\DisplayProof
			& $\rightsquigarrow$ &
			\bottomAlignProof
			\AXC{\ \ \ $\vdots$ \raisebox{1mm}{$\pi'_1$}}
			\noLine
			\UI$\Gamma' \fCenter \varphi$
			\AXC{\ \ \ $\vdots$ \raisebox{1mm}{$\pi_2$}}
			\noLine
			\UI$\varphi \fCenter \Delta$
			\BI$\Gamma' \fCenter \Delta$
			\noLine
			\UIC{\ \ \ \ \ \ \ \ \ $\vdots$ \raisebox{1mm}{$\pi_1[\Delta/\varphi]$}}
			\noLine
			\UI$\Gamma \fCenter \Delta$
			\DisplayProof
			\\
		\end{tabular}
	\end{center}
	\noindent if each rule in $\pi_1$ verifies condition C$_6$.
	
	\paragraph*{C$_7$: Closure under substitution for precedent parameters.} This condition requires each rule to be closed under simultaneous substitution of arbitrary structures for formulas which are congruent parameter occurring in precedent position.
	Condition C$_7$ can be understood analogously to C$_6$, relative to formulas in precedent position. Therefore, for instance, if the following inference is an application of the rule $R$:
	
	\begin{center}
		\AxiomC{$(\Gamma \fCenter \Delta) \big([\varphi]^{pre}_{i} \,|\, i \in I\big)$}
		\RightLabel{$R$}
		\UnaryInfC{$ (\Gamma' \fCenter \Delta') [\varphi]^{pre}$}
		\DisplayProof
	\end{center}
	
	\noindent then also the following inference is an instance of $R$:
	
	\begin{center}
		\AxiomC{$(\Gamma \fCenter \Delta) \big([\Sigma/\varphi]^{pre}_{i} \,|\, i \in I\big)$}
		\RightLabel{$R$}
		\UnaryInfC{$ (\Gamma' \fCenter \Delta') [\Sigma/\varphi]^{pre}$}
		\DisplayProof
	\end{center}
	
	\noindent Similarly to what has been discussed for condition C$_6$, condition C$_7$ caters for the step in the cut elimination procedure in which the cut needs to be ``pushed up'' over rules in which the cut-formula in precedent position is parametric.
	
	\paragraph*{C$_8$: Eliminability of matching principal constituents.}
	
	This condition  requests a standard Gentzen-style checking, which is now limited to the case in which both cut formulas are {\em principal}, i.e.\ each of them has been introduced with the last rule application of each corresponding subdeduction. In this case, analogously to the proof Gentzen-style, condition C$_8$ requires being able to transform the given deduction into a deduction with the same conclusion in which either the cut is eliminated altogether, or is transformed into one or more applications of cut involving proper subformulas of the original cut-formulas.
	
\begin{definition}
The {\em history of a principal formula} is a single node labelled by the formula. 
The {\em history of a structure in a proof} is the intersection of the proof congruence relation with the proof tree relation, and each node is labelled by the structure that generates the proof congruence relation.\footnote{Notice that this can be made formally precise by observing that the instances of substructures in each sequent inherits the strict order of the proof tree.}
\end{definition}

	\begin{remark}
		\label{PS:rem: history tree}
Notice that C$_3$ implies that the history of any structure within a given proof has the shape of a tree, therefore we use `history' and `history-tree' interchangeably. Notice, however, that the history-tree of a structure might have a different shape than the portion of the underlying proof corresponding to it; for instance, the following application of the Contraction rule gives rise to a bifurcation of the history-tree of $\varphi$ which is absent in the underlying branch of the proof tree, given that Contraction is a unary rule.

\ \ \ \ \ \ \ \ \ \ \ \ \ \ \ \ \ \ \ \ \ \ \ \ 
		\begin{tabular}{@{}lcr}
			\bottomAlignProof
			\AXC{$\vdots$}
			\noLine
			\UI$\varphi \circ \varphi \fCenter \Delta$
			\UI$\varphi \fCenter \Delta$
			\DisplayProof
			
			& \ \ \ \ \ \ \ \ &
			
			$\unitlength=0.80mm
			\put(0.00,10.00){\circle*{2}}
			\put(10.00,0.00){\circle*{2}}
			\put(20.00, 10.00){\circle*{2}}
			\put(40.00, 10.00){\circle*{2}}
			\put(0.00,10.00){\line(1,-1){10}}
			\put(10.00,0.00){\line(1,1){10}}
			\put(40.00, 0.00){\circle*{2}}
			\put(40.00,0.00){\line(0,1){10}}
			$
			\\
		\end{tabular}

	\end{remark}

In any display calculus, principal formulas are introduced either by axioms or by operational inferences. The definition of congruent relation and conditions C$_2$, C$_3$ and C$_4$ guarantee that: (i) the history tree of a structure in a proof exists and it is unique, (ii) it is a tree, (iii) the label of each node exists and it is unique. 

To exemplify the notions introduced so far, we consider a concrete LE-logic. The language of the basic {\em tense logic}, denoted $\mathcal{L}_{\mathrm{T}}$, is obtained by instantiating $\mathcal{F}: = \{\wdia\}$ and $\mathcal{G} = \{\bbox\}$ with $n_\wdia = n_\bbox = 1$ and $\varepsilon_\wdia =\varepsilon_\bbox = 1$. The following is a proof of $\wdia (q \aor r) \fCenter \wdia q \aor \wdia r$ in $\mathrm{D.L_T}$.

\begin{center}
\AXC{\ }
\LL{\fns Id}
\UI$q \fCenter q$
\RL{\fns $\wdia_R$}
\UI$\WDIA q \fCenter \wdia q$
\RL{\fns $\aor_{R1}$}
\UI$\WDIA q \fCenter \wdia q \aor \wdia r$
\LL{\fns $\WDIA \dashv \BBOX$}
\UI$q \fCenter \BBOX (\wdia q \aor \wdia r)$
\AXC{\ }
\LL{\fns Id}
\UI$r \fCenter r$
\RL{\fns $\wdia_R$}
\UI$\WDIA r \fCenter \wdia r$
\RL{\fns $\aor_{R2}$}
\UI$\WDIA r \fCenter \wdia q \aor \wdia r$
\LL{\fns $\WDIA \dashv \BBOX$}
\UI$r \fCenter \BBOX (\wdia q \aor \wdia r)$
\LL{\fns $\aor_L$}
\BI$q \aor r \fCenter \BBOX (\wdia q \aor \wdia r)$
\RL{\fns $\WDIA \dashv \BBOX$}
\UI$\WDIA (q \aor r) \fCenter \wdia q \aor \wdia r$
\LL{\fns $\wdia_L$}
\UI$\wdia (q \aor r) \fCenter \wdia q \aor \wdia r$
\DP
\end{center}


In order to see the notions introduced so far at work, we build step by step the history tree of the formula $\wdia q \aor \wdia r$ occurring in the end-sequent of the proof above.

Consider the subtree on the left obtained via the inference $r$ falling under the rule $R = \wdia_L$ that we recall on the right, 
\begin{center}
\begin{tabular}{cc}
\AX$\WDIA (q \aor r) \fCenter \wdia q \aor \wdia r$
\LL{\fns $r \in R$}
\UI$\wdia (q \aor r) \fCenter \wdia q \aor \wdia r$
\DP
 & 
\AX$\WDIA \varphi \fCenter \Delta$
\LL{\fns $R = \wdia_L$}
\UI$\wdia \varphi \fCenter \Delta$
\DP
 \\
\end{tabular}
\end{center}
\noindent The formula occurrence $\wdia q \aor \wdia r$ in the conclusion (resp.~premise) of $r \in R$ instantiates the structure meta-variable $\Delta$ in the conclusion (resp.~premise) of $R$, therefore it is a parameter. Moreover, all the occurrences of the formula $\wdia q \aor \wdia r$ in $r$ are instances of the same structure meta-variable $\Delta$ in $R$, therefore they are locally congruent. The formula occurrence $\wdia (q \aor r)$ in the conclusion of $r$ does not instantiate a structure meta-variable in the conclusion of $R$, therefore it is non-parametric and it counts as a principal formula. The structure occurrence $\WDIA (q \aor r)$ in the premise of $r$ is an instantiation of the structure $\WDIA \varphi$ in the premise of $R$, but it is not the instantiation of a structure meta-variable, therefore, at this stage, we cannot declare $\WDIA (q \aor r)$ a parameter (it is not principal because it is a structure and it occurs in a premise). Analogously, the formula occurrence $q \aor r$ in the premise of $r$ does not instantiate a structure meta-variable, therefore, at this stage, we cannot declare $q \aor r$ a parameter (it is not principal because it occurs in a premise). 

Consider the subtree on the left obtained via the inference $r$ falling under the rule $R = \WDIA \dashv \BBOX$ that we recall on the right, 
\begin{center}
\begin{tabular}{cc}
\AX$q \aor r \fCenter \BBOX (\wdia q \aor \wdia r)$
\RL{\fns $r \in R$}
\UI$\WDIA (q \aor r) \fCenter \wdia q \aor \wdia r$
\DP
 & 
\AX$\Gamma \fCenter \BBOX \Delta$
\RL{\fns $R = \WDIA \dashv \BBOX$}
\UI$\WDIA \Gamma \fCenter \Delta$
\DP
 \\
\end{tabular}
\end{center}
\noindent The formula occurrences $q \aor r$ in the premise and, respectively, in the conclusion of $r \in R$ are locally congruent because they are instances of the same structure meta-variable $\Gamma$ occurring in the premise and, respectively, in the conclusion of $R$. Likewise the formula occurrences $\wdia q \aor \wdia r$ in $r \in R$ are locally congruent because they instantiate the same meta-variable $\Delta$ in $R$. 

Consider the subtree on the left obtained via the inference $r$ falling under the rule $R = \WDIA \dashv \BBOX$ that we recall on the right, 
\begin{center}
\begin{tabular}{cc}
\AX$q \fCenter \BBOX (\wdia q \aor \wdia r)$
\AX$r \fCenter \BBOX (\wdia q \aor \wdia r)$
\LL{\fns $r \in R$}
\BI$q \aor r \fCenter \BBOX (\wdia q \aor \wdia r)$
\DP
 & 
\AX$p \fCenter \Delta$
\AX$q \fCenter \Delta$
\LL{\fns $R = \aor_L$}
\BI$p \aor q \fCenter \Delta$
\DP
 \\
\end{tabular}
\end{center}
\noindent The formula occurrence $q \aor r$ in the conclusion of $r$ does not instantiate a structure meta-variable in the conclusion of $R$, therefore it is a principal formula. The structure occurrences $\BBOX (\wdia q \aor \wdia r)$ in $r \in R$ are locally congruent because they instantiate the same meta-variable $\Delta$ in $R$. The formula occurrences $\wdia q \aor \wdia r$ in $r \in R$ are congruent because they instantiate the same substructure of the $\BBOX (\wdia q \aor \wdia r)$ in $R$.

Considering the backward application of the rules $R = \WDIA \dashv \BBOX$ and $R = \aor_{R}$ in each branch, we will reach the sequent where $\wdia q \aor \wdia r$ is principal and it is easy to check that the only subtree of the proof containing the congruence class of $\wdia q \aor \wdia r$ is the following

\begin{center}
\def\fCenter{{\mbox{$\ \phantom{\Rightarrow}\ $}}}

\AXC{\ }
\RL{\fns $\aor_{R1}$}
\UI$\phantom{\WDIA q} \fCenter \wdia q \aor \wdia r$
\LL{\fns $\WDIA \dashv \BBOX$}
\UI$\phantom{q} \fCenter \phantom{\BBOX} \wdia q \aor \wdia r$

\AXC{\ }
\RL{\fns $\aor_{R2}$}
\UI$\phantom{\WDIA r} \fCenter \wdia q \aor \wdia r$
\LL{\fns $\WDIA \dashv \BBOX$}
\UI$\phantom{r} \fCenter \phantom{\BBOX} \wdia q \aor \wdia r$
\LL{\fns $\aor_{L}$}
\BI$\phantom{q \aor r} \fCenter \phantom{\BBOX} \wdia q \aor \wdia r$
\RL{\fns $\WDIA \dashv \BBOX$}
\UI$\phantom{\WDIA (q \aor r)} \fCenter \wdia q \aor \wdia r$
\LL{\fns $\wdia_{L}$}
\UI$\phantom{\wdia (q \aor r)} \fCenter \wdia q \aor \wdia r$
\DP
\def\fCenter{{\mbox{$\ \Rightarrow\ $}}}
\end{center}

	\begin{thm} (cf.\ \cite[Section 3.3, Appendix A]{Wan02})
		\label{thm:meta}
		Any calculus satisfying conditions C$_2$, C$_3$, C$_4$, C$_5$, C$_6$, C$_7$, C$_8$ enjoys cut elimination. If C$_1$ is also satisfied, then the calculus enjoys the subformula property.
	\end{thm}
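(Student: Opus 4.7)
The plan is to follow the standard Belnap-Wansing strategy, proving cut elimination by a double induction on the lexicographically ordered pair $(c,h)$, where $c$ is the complexity of the cut-formula and $h$ is the sum of the heights of the two subderivations immediately above the cut. We consider a topmost cut in a derivation and show how to transform it into applications of cut of strictly smaller rank $(c,h)$, so that an outer induction on the number of cuts lets us eliminate them all.

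The main case distinction is whether the cut-formula $A$ is principal on both sides or parametric in at least one premise. If $A$ is parametric in, say, the right premise $\pi_2$ ending in $A \Rightarrow Y$, then by conditions C$_2$, C$_3$ and C$_4$ the occurrence of $A$ has a well-defined \emph{history tree} inside $\pi_2$ (cf.\ Remark \ref{PS:rem: history tree}), whose leaves are either identity axioms or principal introductions. Using C$_7$ (closure under substitution for precedent parameters), one replaces every parametric ancestor of $A$ along this tree by the left-premise structure $X$, producing a derivation $\pi_2[X/A]$ whose conclusion is $X \Rightarrow Y$, except at the leaves of the history tree, where one performs a cut of strictly smaller height against a copy of the left premise $\pi_1$. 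Each such residual cut still has the same cut-formula $A$ but strictly smaller height, so the inner induction hypothesis applies. The case where $A$ is parametric on the left is symmetric, using C$_6$. Here C$_4$ is essential to guarantee that every ancestor to be substituted sits in precedent (resp.\ succedent) position, so the substitution produces a well-formed sequent.

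If $A$ is principal in both premises, then by C$_5$ (display of principal constituents), the last rule on each side can be taken to exhibit $A$ as the entire antecedent or succedent, and condition C$_8$ supplies the explicit reduction replacing the cut by one or more cuts on proper subformulas of $A$. These new cuts have strictly smaller cut-rank $c$ and hence fall under the outer induction hypothesis. Combining the two cases closes the induction and yields cut elimination.

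Finally, for the subformula property, assume C$_1$ as well. Since every rule of the calculus has the property that each formula appearing in any premise is a subformula of some formula in the conclusion, a straightforward induction on the height of a cut-free derivation shows that every formula occurring anywhere in the derivation is a subformula of some formula in its endsequent. The only obstacle to this conclusion in a general derivation is the cut rule, which is no longer present after the first part of the proof, so the subformula property follows. The delicate step throughout is bookkeeping the history tree in the parametric case: one must ensure that C$_2$-C$_4$ really do identify a unique tree of congruent ancestors and that C$_6$/C$_7$ license the uniform structural substitution; once this is set up, the rest of the argument proceeds by routine induction.
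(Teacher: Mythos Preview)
The paper does not supply its own proof of this metatheorem; it merely cites Wansing's presentation \cite[Section 3.3, Appendix A]{Wan02} as the source. Your sketch is a faithful outline of precisely that Belnap--Wansing argument: the lexicographic induction on $(c,h)$, the parametric push-up via the history tree using C$_6$/C$_7$ (with C$_2$--C$_4$ guaranteeing the tree is well defined), the principal reduction via C$_8$, and the subformula property via C$_1$ once cut is gone. One small point you leave implicit: C$_5$ is also needed in the parametric stage, not only in the principal-on-both-sides case, since at each leaf of the history tree where $A$ becomes principal it is C$_5$ that guarantees $A$ is displayed there, so that the residual cut with the copy of $\pi_1$ is well formed. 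Otherwise your account matches the referenced argument.
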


\begin{definition}[Analytic structural rules]\label{def:analytic}
		(cf.\ \cite[Definition 3.13]{CiRa14}) A structural rule which satisfies conditions C$_1$-C$_7$ is an \emph{analytic structural rule}.
	\end{definition}
	
\begin{prop}(cf.~\cite{greco2018unified})
	\label{prop:type5} Every analytic $(\Omega, \epsilon)$-inductive LE-inequality can be equivalently transformed, via an ALBA-reduction, into a set of analytic structural rules.
\end{prop}

Below we briefly check that $\mathrm{D.LE^*}$ is a proper display calculus.
\begin{itemize}
\item In the operational rules of $\mathrm{D.LE^*}$ a formula meta-variable is not preserved from premise(s) to conclusion, but these formula meta-variables occur as subformulas of the principal formula. Of course, also Cut does not preserve formula meta-variables from premises to conclusion, but it is eliminable. Moreover, in all the rules of $\mathrm{D.LE^*}$ the structure meta-variable are preserved from premise(s) to conclusion.\footnote{The symbol $\AATOP$ in the rule $\AATOP_W$ (resp.~$\ABOT$ in the rule $\ABOT_W$) is not preserved from premise to conclusion, but $\AATOP$ (resp.~$\ABOT$) is a zeroary connective, it is not a structure meta-variable. The only way to introduce $\AATOP$ (resp.~$\ABOT$) is via the rule $\top_R$ (resp.~$\bot_L$), so $\top$  (resp.~$\bot$) occurs as a subformula in the conclusion of $\AATOP_W$ (resp.~$\ABOT_W$).} Therefore, condition C$_1$ is satisfied. 

\item All the rules of $\mathrm{D.LE^*}$ implement the notational convention that structures are considered congruent in an inference only if they are instances of the same structure meta-variable. Therefore, condition C$_2$ is satisfied. 

\item In each rule of $\mathrm{D.LE^*}$ a structure meta-variable occurs at most once in the premise and at most once in the conclusion. Therefore, condition C$_3$ is trivially satisfied. 

\item The language of $\mathrm{D.LE^*}$ makes use of two disjoint sets of structural meta-variables: $\Gamma \in \mathsf{Str}_\mathcal{F}$ and $\Delta \in \mathsf{Str}_\mathcal{G}$. In each rule of $\mathrm{D.LE^*}$ structures meta-variables $\Gamma$ occur only in precedent position and structures meta-variables $\Delta$ occur only in succedent position. Therefore, condition C$_4$ is immediately satisfied. 

\item In $\mathrm{D.LE^*}$, the identity axiom schema introduce a single principal (atomic) formula in precedent position and a single principal (atomic) formula in succedent position, and operational rules introduce a single principal (complex) formula either in precedent or in succedent position; moreover, principal formulas are introduced {\em in display}, e.g.~they occur as the entire structure either in precedent or in succedent position. Therefore, condition C$_5$ is satisfied. 

\item The only structural rules of $\mathrm{D.LE^*}$ are Cut, the display postulates, and the rules $\AATOP_W$ and $\ABOT_W$. All the structure meta-variables occurring in these rules are arbitrary and no side conditions is used. Therefore, condition C$_6$ and C$_7$ are satisfied. 

\item The eliminability of principal cuts whenever the cut formulas are atoms or constants is immediate given that one of the premises and the conclusion of the cut are the same sequent. If the cut formulas are complex, a standard proof transformation reducing the complexity of the cut is always available. Below we exemplify the proof transformation in the case of a connective $g$ with $n_g = 3$, $\varepsilon_{g, 1} = \partial$, $\varepsilon_{g, 2} = \varepsilon_{g, 3} = 1$. 
\end{itemize}

{\small 
\begin{center}
\begin{tabular}{c}
\AXC{\ \ \ $\vdots$ \raisebox{1mm}{$\pi_1$}}
\noLine
\UI$\Gamma_1 \fCenter \GC (\varphi, \psi, \xi)$
\RL{\fns $g_R$}
\UI$\Gamma_1 \fCenter g (\varphi, \psi, \xi)$

\AXC{\ \ \ $\vdots$ \raisebox{1mm}{$\pi_2$}}
\noLine
\UI$\Gamma_2 \fCenter \varphi$
\AXC{\ \ \ $\vdots$ \raisebox{1mm}{$\pi_3$}}
\noLine
\UI$\psi \fCenter \Delta_3$
\AXC{\ \ \ $\vdots$ \raisebox{1mm}{$\pi_4$}}
\noLine
\UI$\xi \fCenter \Delta_4$

\LL{\fns $g_L$}
\TI$g (\varphi, \psi, \xi) \fCenter \GC (\Gamma_2, \Delta_3, \Delta_4)$
\RL{\fns Cut}
\BI$\Gamma_1 \fCenter \GC (\Gamma_2, \Delta_3, \Delta_4)$
\DP

 \\

\rule{0pt}{3.25ex}\rotatebox[origin=c]{-90}{$\rightsquigarrow$} \\

\AXC{\ \ \ $\vdots$ \raisebox{1mm}{$\pi_2$}}
\noLine
\UI$\Gamma_2 \fCenter \varphi$
\AXC{\ \ \ $\vdots$ \raisebox{1mm}{$\pi_1$}}
\noLine
\UI$\Gamma_1 \fCenter \GC (\varphi, \psi, \xi)$
\RL{\fns $(\GC, \GCF_1)$}
\UI$\varphi \fCenter \GCF_1 (\Gamma_1, \psi, \xi)$
\RL{\fns Cut}
\BI$\Gamma_2 \fCenter \GCF_1 (\Gamma_1, \psi, \xi)$
\RL{\fns $(\GCF_1, \GC)$}
\UI$\Gamma_1 \fCenter \GC (\Gamma_2, \psi, \xi)$
\RL{\fns $\GHF_2 \dashv \GC$}
\UI$\GHF_2 (\Gamma_2, \Gamma_1, \xi) \fCenter \psi$

\AXC{\ \ \ $\vdots$ \raisebox{1mm}{$\pi_3$}}
\noLine
\UI$\psi \fCenter \Delta_3$
\RL{\fns Cut}
\BI$\GHF_2 (\Gamma_2, \Gamma_1, \xi) \fCenter \Delta_3$
\LL{\fns $\GHF_2 \dashv \GC$}
\UI$\Gamma_1 \fCenter \GC (\Gamma_2, \Delta_3, \xi)$
\RL{\fns $\GHF_3 \dashv \GC$}
\UI$\GHF_3 (\Gamma_2, \Delta_3 \Gamma_1) \fCenter \xi$

\AXC{\ \ \ $\vdots$ \raisebox{1mm}{$\pi_4$}}
\noLine
\UI$\xi \fCenter \Delta_4$
\RL{\fns Cut}
\BI$\GHF_3 (\Gamma_2,\Delta_3, \Gamma_1) \fCenter \Delta_4$
\LL{\fns $\GHF_3 \dashv \GC$}
\UI$\Gamma_1 \fCenter \GC (\Gamma_2, \Delta_3, \Delta_4)$
\DP
 \\
\end{tabular}
\end{center}
 }

\begin{itemize}
\item[] The cut formulas of the cut inferences in the new proof are strict subformulas of the cut inference in the original proof, therefore the complexity of all the cut inferences in the new proof is lower than the complexity of the cut inference in the original proof. The proof transformation above relies on two ingredients: (i) the operational rules of $g$ exhibit the principal and the auxiliary formulas in display, (ii) the display rules apply to $\GC$ in each coordinate. These ingredients are available per each connective in the language of $\mathrm{D.LE^*}$ by the definition of operational rules and the fact that display property holds for $\mathrm{D.LE^*}$, so the proof transformation exemplified above immediately scales to arbitrary principal cut formulas. Therefore, condition C$_8$ is satisfied. 
\end{itemize}

\section{Gaggle theory: basic notions and terminology}
\label{sec:GagglesTheoryBasicNotionsAndNomenclature}

In \cite{Gore98Gaggles} (see also \cite{gore1998substructural}) a modified presentation of basic notions introduced in \cite{Dunn93} is provided, using a specific terminology and notational conventions. This terminology is subsequently adopted in a series of publications we collectively refer to as gaggle theory literature. In this paper, we adopt the terminology and (slightly modified presentations of) basic notions adopted in a series of publications we collectively refer to as order theory (or algebraic logic) literature (see, for instance, \cite{JonTar51,DaveyPriestley02,GalJipKowOno07}). In this appendix, we facilitate the translation between gaggle theory and order theory terminologies and we compare the basic notions. 

Below we provide slightly modified (but equivalent) definitions of basic notions in the gaggle theory literature as introduced in \cite{Gore98Gaggles}[Section 2]. Notice that the terminology and notational conventions are exactly like in \cite{Gore98Gaggles}[Section 2].

First we need the following
\begin{definition}[Isotone, antitone]
\label{def:IsotonicAntitonic}
An operation $\ell$ is called
\begin{itemize}
\item {\em isotonic in the j-th position} if for all $a, b \in A$, 
\begin{center}
$a \leq b \Rightarrow \ell\, (a_1, \ldots, a_{j-1}, a, a_{j+1}, \ldots, a_n) \leq \ell \, (a_1, \ldots, a_{j-1}, b, a_{j+1}, \ldots, a_n)$
\end{center}
\item{\em antitonic in the j-th position} if for all $a, b \in A$, 
\begin{center}
$a \leq b \Rightarrow \ell\, (a_1, \ldots, a_{j-1}, b, a_{j+1}, \ldots, a_n) \leq \ell \, (a_1, \ldots, a_{j-1}, a, a_{j+1}, \ldots, a_n)$.
\end{center}
\end{itemize}
\end{definition}

We now provide a modified, but equivalent, presentation of tonicity.\footnote{Notice that in the gaggle theory literature the tonicity of an operation $\ell$ is rather presented as a predicate $\textrm{tn}(\ell, j, \pm)$ where $\pm \in \{-, +\}$. In this paper we define $\textrm{tn}(\ell, j) = -$ iff $\textrm{tn}(\ell, j, -)$ is true (resp.~$\textrm{tn}(\ell, j) = +$ iff $\textrm{tn}(\ell, j, +)$ is true).}

\begin{definition}
\label{def:Tonicity}
The {\em tonicity} of an operation $\ell$ is a function assigning a value in the two-element set $\{+, -\}$ to each coordinate $j$ of the operation $\ell$, namely $\textrm{tn}(\ell, j) = +$ or $\textrm{tn}(\ell, j) = -$. The value $+$ (resp.~$-$) encodes the fact that $\ell$ is an {\em isotonic} (resp.~{\em antitonic}) operation in the $j$-th coordinate (see Definition \ref{def:IsotonicAntitonic}).
\end{definition}

Notice that, as explained in Section \ref{ssec:basic}, the {\em order type} $\epsilon$ of a connective $\ell$ is a function assigning a value in the two-element set $\{1, \partial\}$ to each coordinate $j$ of the connective $\ell$, namely $\epsilon_{\ell, j} = 1$ or $\epsilon_{\ell, j} = \partial$. The value $1$ (resp.~$\partial$) encodes the fact that the interpretation of the connective $\ell$ (namely the operation $\ell^{\mathbb{A}}$ where $\mathbb{A}$ is the relevant algebraic semantics) is an order-preserving (resp.~order-reversing) function in the $j$-th coordinate. 

We preliminarly introduce the following auxiliary definition: 

\begin{definition}[Normal operations]
\label{def:NormalOperations}
An {\em $f$-normal} operation $\ell$ has a tonicity and it preserves $\abot$ in the isotone coordinates and transforms $\aatop$ into $\abot$ in the antitone coordinates, namely
\begin{itemize}
\item if $\textrm{tn}(\ell, j) = +$, then $\ell(a_1\ldots, \abot_j, \ldots a_{n}) = \abot$,
\item if $\textrm{tn}(\ell, j) = -$, then $\ell(a_1\ldots, \aatop_j, \ldots a_{n}) = \abot$.
\end{itemize}

A {\em $g$-normal} operation $\ell$ has a tonicity and it preserves $\aatop$ in the isotone coordinates and transforms $\abot$ into $\aatop$ in the antitone coordinates, namely
\begin{itemize}
\item if $\textrm{tn}(\ell, j) = +$, then $\ell(a_1\ldots, \aatop_j, \ldots a_{n}) = \aatop$,
\item if $\textrm{tn}(\ell, j) = -$, then $\ell(a_1\ldots, \abot_j, \ldots a_{n}) = \aatop$.
\end{itemize}

A {\em normal} operation is either an $f$-normal or a $g$-normal operation.\footnote{Notice that graded modalities are natural examples of normal operations (see \cite{gradedgoble,Fin72,PanTzi22}).}
\end{definition}

We are now ready to provide the following:

\begin{definition}[Trace]
We postulate that $+- = -+ = -$ and $++ = -- = +$. Given an $n$-ary operation $\ell$, and the tonicity of $\ell$ per each coordinate (namely the vector of values $(\pm_1, \pm_2, \ldots, \pm_n)$ where $\pm \in \{+, -\}$), the {\em trace} of $\ell$ is defined as follows:
\begin{itemize}
\item $\textrm{tr}(\ell) = (-\pm_1, -\pm_2, \ldots, -\pm_n) \mapsto -$, if $\ell$ is an $f$-normal operation,
\item $\textrm{tr}(\ell) = (+\pm_1, +\pm_2, \ldots, +\pm_n) \mapsto +$, if $\ell$ is a $g$-normal operation.
\end{itemize}
\end{definition}

We preliminarily introduce the following auxiliary definitions: 

\begin{definition}
For any two operations $k,\ell$ of arity $n$ and a family of operators $OP$, we say that 
\begin{itemize}
\item $k$ is the {\em contrapositive} of $\ell$ in the $j$-th coordinate whenever 
\begin{center}
$\ell : (\pm_1, \ldots, \pm_j ,\ldots, \pm_n) \mapsto \pm_{n+1} \ \Rightarrow \ k : (\pm_1, \ldots, -\pm_{n+1}, \ldots, -\pm_n) \mapsto -\pm_j$;
\end{center}
\item $k$ and $\ell$ satisfy the {\em Abstract Law of Residuation} in their $j$-th coordinate if 
\begin{itemize}
\item $k$ and $\ell$ are contrapositive,
\item $\ell (a_1, \ldots, a_j ,\ldots, a_n) \leq b \Leftrightarrow a_j \leq k (a_1, \ldots, b ,\ldots, a_n)$ if $\ell$ is an $f$-normal operation,
\item $b \leq \ell (a_1, \ldots, a_j ,\ldots, a_n) \Leftrightarrow k (a_1, \ldots, b ,\ldots, a_n) 
\leq a_j$ if $\ell$ is a $g$-normal operation;
\end{itemize}
\item $k, \ell \in OP$ are {\em relatives} if they satisfy the Abstract Law of Residuation in some coordinate;
\item $OP$ is {\em founded} if there is a distinguished operation $\ell \in OP$ called the {\em head} and any other operation $k \in OP$ is a relative of $\ell$.
\end{itemize}
\end{definition}

We are now ready to define

\begin{definition}
\label{def:TonoidPartialGaggleFrame}
A structure $\mathcal{T} := (A, \leq, OP)$ is a {\em tonoid}\footnote{See \cite{Dun90} for the definition of {\em distributoid}.} if $(A, \leq)$ is a non-empty partially ordered set, and each operation in $OP$ has finite arity and it is a normal operation (therefore it has a trace). 

A {\em partial gaggle}\footnote{Notice that a {\em gaggle} is a partial gaggle with a distributive lattice reduct (see \cite{Dun90,Dunn93}).} is a tonoid where $OP$ is a founded family.

A structure $\mathcal{T} := (U, \sqsubseteq, \langle R_i \rangle_{i \in I})$ is a {\em frame} if $(U, \sqsubseteq)$ is a non-empty partially ordered set, and each relation $R_i$ of arity $n+1$ is the relation associated to the operation $\ell$ of arity $n$ in the corresponding tonoid (resp.~partial gaggle) $\mathcal{T}$.\footnote{Notice that the structures called frames in Definition \ref{def:TonoidPartialGaggleFrame} are, indeed, ordered frames, therefore the lattice fragment, if present, is distributive.}
\end{definition}

In \cite{Dunn93} it is proved that every partial gaggle can be represented as a frame (in the sense of Definition \ref{def:TonoidPartialGaggleFrame}), and in \cite{Gore98Gaggles} it is proved that every basic LE-logic (see Section \ref{ssec:basic}) can be captured by a cut-free basic display calculus. Two natural questions in the context of gaggle theory are the following. 
\begin{itemize}
\item[(i)] Consider a logic $\mathbf{L}$ in the language $\mathcal{L}$ and a display calculus $\mathbf{D.L}$ with logical language $\mathcal{L}$ and structural language $\mathcal{L}^\ast$, where the operations interpreting the connectives in $\mathcal{L}^\ast$ are fully-founded but the operations interpreting the connectives in $\mathcal{L}$ are not, and such that $\mathbf{D.L}$ derives all the theorems of $\mathbf{L}$. Is the logic of $\mathbf{D.L}$ a conservative extension of $\mathbf{L}$?\footnote{This question is explicitly stated as open problem in \cite{Gore98Gaggles}[Section 5.1]}
\item[(ii)] Consider a logic $\mathbf{L}$ in the language $\mathcal{L}$ that can be presented via a display calculus $\mathbf{D.L}$. Consider the logic $\mathbf{L'} = \mathbf{L} \cup \Sigma$, where $\Sigma$ is a set of axioms in the language $\mathcal{L}$. Can we provide a display calculus $\mathbf{D.L'}$ capturing $\mathbf{L'}$?
\end{itemize}

In \cite{greco2018unified} questions (i) and (ii) are answered in the positive.\footnote{The answer to question (i) is the statement of Theorem \ref{th:conservative extension} in Section \ref{ssec:basic}, where we reproduce the proof as well. Question (ii) is also addressed in \cite{CiRa14}. See \cite{greco2018unified}[Section 9] for a comparison between the characterisations of display calculi provided in \cite{greco2018unified} and \cite{CiRa14}. See \cite{ChnGrePalTzi21} for an overview of the literature on automatic rule generation in the context of structural proof theory.} 

We finally provide a translation table between order theory and gaggle theory terminology:
\begin{center}
{\small
\begin{tabular}{@{}rcl@{}}
order theory &  & gaggle theory \\
\hline
order-type & $\Leftrightarrow$ & tonicity \\
$\epsilon_{\ell, j} = 1$ & $\Leftrightarrow$ & $\textrm{tn}(\ell^{\mathbb{A}}, j) = +$ \\
$\epsilon_{\ell, j} = \partial$ & $\Leftrightarrow$ & $\textrm{tn}(\ell^{\mathbb{A}}, j) = -$ \\
$\ell^{\mathbb{A}}$ is order-pres.~in the $j$ coord. & $\Leftrightarrow$ & $\ell^{\mathbb{A}}$ is isotonic in the $j$ coord. \\
$\ell^{\mathbb{A}}$ is order-rev.~in the $j$-th coord. & $\Leftrightarrow$ & $\ell^{\mathbb{A}}$ is antitonic in the $j$ coord. \\
$\ell^{\mathbb{A}}$ is residuated in the $j$-th coord. & $\Leftrightarrow$ & $\ell^{\mathbb{A}}$ is contrapositive in the $j$ coord. \\
$\ell \in \mathcal{F}$ & $\Rightarrow$ & $\textrm{tr}(\ell^{\mathbb{A}}) = (-\pm_1, -\pm_2, \ldots, -\pm_n) \mapsto -$\\
$\ell \in \mathcal{G}$ & $\Rightarrow$ & $\textrm{tr}(\ell^{\mathbb{A}}) = (+\pm_1, +\pm_2, \ldots, +\pm_n) \mapsto +$ \\
$\ell$ is a normal connective & $\Leftrightarrow$ & $\ell^{\mathbb{A}}$ is normal operation \\
$\ell$ is a normal connective & $\Leftrightarrow$ & $\ell^{\mathbb{A}}$ has a trace \\
residuation & $\Leftrightarrow$ & Abstract Law of Residuation \\
$k^{\mathbb{A}}$ and $\ell^{\mathbb{A}}$ are residuated in the $j$-th coord. & $\Leftrightarrow$ & $k^{\mathbb{A}}$ and $\ell^{\mathbb{A}}$ are relative \\
residuated family & $\Leftrightarrow$ & founded family \\
head & $\Leftrightarrow$ & head \\
\hline
\end{tabular}
 }
\end{center}

\begin{remark}
In the order theory literature, the following finer distinctions are sometimes used: (i) if an operation is unary, `$\ell^{\mathbb{A}}$ has an adjoint in the $j$-th coordinate' might be used instead of `$\ell^{\mathbb{A}}$ is residuated in the $j$-th coordinate'; (ii) if an operation is order-reversing it the $j$-th coordinate, `$\ell^{\mathbb{A}}$ is in a Galois connection in the $j$-th coordinate' might be used instead of `$\ell^{\mathbb{A}}$ is residuated in the $j$-th coordinate'.

\end{remark}

While the characterization results given in \cite{Kracht,greco2018unified,CiRa14} set hard boundaries to the scope of proper display calculi, the {\em multi-type methodology} refines and generalises the theory allowing to capture logics which are not properly displayable in their single-type presentation. Examples of such logics include very well known and widely used logical frameworks such as inquisitive logic \cite{inquisitive}, DEL \cite{FriGreKurPalTzi16}, PDL \cite{PDL}, semi De Morgan logic and some of its extensions \cite{SDM}, bilattice logic \cite{greco2017bilattice}, non normal and conditional logics \cite{CHEN2022104756}, and logics of rough algebras \cite{10.1007/978-3-662-58771-3_14,GLMP18}. Moreover, the approach provides a natural environment for the design of new families of logics, such as those introduced in \cite{bilkova2018logic}. In this line of research, all the basic notions (i.e.~order type, order-preserving and order-reversing operations, operators and normal operators) are generalised to functions, namely operations the domain and codomain of which do not necessarily coincide (functions in this sense are called {\em heterogeneous} functions, resp.~operators, connectives, or modalities).


\end{document}